\definecolor{bigpart}{rgb}{0.9,0.3,0.4}
\definecolor{ccorner}{rgb}{0.45,0.3,0.85}
\definecolor{ggreen}{rgb}{0.3,0.85,0.3}
\definecolor{Brown}{rgb}{.75,.5,.25}
\definecolor{DGreen}{rgb}{0,0.55,0}
\definecolor{Olive}{rgb}{0.41,0.55,0.13}
\newtheorem{theorem}{Theorem}[section]
\newtheorem{lemma}[theorem]{Lemma}
\newtheorem{proposition}[theorem]{Proposition}
\newtheorem{corollary}[theorem]{Corollary}
\theoremstyle{definition}
\newtheorem{example}[theorem]{Example}
\newenvironment{assumption}[1]
  {\innercustomthm}
  {\endinnercustomthm}
\theoremstyle{remark}
\newtheorem{remark}[theorem]{Remark}
\def\db#1{\| #1 \|}
\newcommand{\ddnew}[4]{[ #1 ]_{{ \mathbf{C}}^{#2}_{#3}|\mathcal{G},#4}}
\newcommand{\vn}[1]{{\vert\kern-0.23ex\vert\kern-0.23ex\vert #1 
    \vert\kern-0.23ex\vert\kern-0.23ex\vert}}
\newcommand{\A}{\mathcal{A}}
\newcommand{\C}{\mathcal{C}}
\newcommand{\F}{\mathcal{F}}
\newcommand{\bF}{\mathbb{F}}
\renewcommand{\L}{\mathcal{L}}
\newcommand{\bP}{\mathbb{P}}
\newcommand{\cE}{\mathcal{E}}
\newcommand{\cS}{\mathcal{S}}
\newcommand{\G}{\mathcal{G}}
\newcommand{\N}{\mathbb{N}}
\newcommand{\R}{\mathbb{R}}
\newcommand{\Z}{\mathbb{Z}}
\newcommand{\nn}{\nonumber}
\def\cG{\mathcal{G}}
\newcommand{\eps}{\varepsilon}
\def\E{\hskip.15ex\mathbb{E}\hskip.10ex}
\renewcommand{\P}{\bP}
\renewcommand{\Re}{\operatorname{Re}}
\renewcommand{\d}{\partial}
\renewcommand{\phi}{\varphi}
\newcommand{\one}{\mathbbm{1}}
\newcommand{\cP}{\mathcal{P}}
\newcommand{\cF}{\mathcal{F}}
\newcommand{\cA}{\mathcal{A}}
\newcommand{\cC}{\mathcal{C}}
\newcommand{\scC}{\mathscr{C}}
\DeclareMathOperator{\Law}{Law}
\newcommand{\wt}{\widetilde}
\newcommand{\nalpha}{\wt\alpha}
\def\half{\frac{1}{2}}
\def\({\left(}
\def\){\right)}
\begin{document}
\title{Strong rate of convergence of the Euler scheme for SDEs with irregular drift driven by L\'evy noise}
\author{Oleg Butkovsky\thanks{Weierstrass Institute, Mohrenstra\ss e 39, 10117 Berlin, Germany$\qquad$\url{oleg.butkovskiy@gmail.com}}\,, Konstantinos Dareiotis\thanks{University of Leeds, Woodhouse, LS2 9JT Leeds, United Kingdom$\qquad$\url{k.dareiotis@leeds.ac.uk}}, and M\'at\'e Gerencs\'er\thanks{TU Wien, Wiedner Hauptstra\ss e 8-10, 1040 Vienna, Austria$\qquad$\url{mate.gerencser@tuwien.ac.at}}}
\maketitle
\begin{abstract}
We study the strong rate of convergence of the Euler--Maruyama scheme for a multidimensional stochastic differential equation (SDE)
$$
dX_t = b(X_t) \, dt + dL_t,
$$
with irregular $\beta$-H\"older drift, $\beta > 0$, driven by a L\'evy process with exponent $\alpha \in (0, 2]$. For $\alpha \in [2/3, 2]$, we obtain strong $L_p$ and almost sure convergence rates in the entire range $\beta > 1 - \alpha/2$, where the SDE is known to be strongly well-posed. This significantly improves the current state of the art, both in terms of convergence rate and the range of~$\alpha$. Notably, the obtained convergence rate does not depend on $p$, which is a novelty even in the case of smooth drifts.
As a corollary of the obtained moment-independent error rate, we show that the Euler--Maruyama scheme for such SDEs converges almost surely and obtain an explicit convergence rate. Additionally, as a byproduct of our results, we derive strong $L_p$ convergence rates for approximations of nonsmooth additive functionals of a L\'evy process.
Our technique is based on a new extension of stochastic sewing arguments and L\^e's quantitative John-Nirenberg inequality.
\end{abstract}
\section{Introduction}
We consider the stochastic differential equation 
\begin{equation}\label{eq:main-SDE}
	dX_t=b(X_t)\,dt+dL_t,\quad t\ge0,\quad X_0=x_0,
\end{equation}
driven by a $d$-dimensional L\'evy process $L$. Here the coefficient $b$ is a measurable function $\R^d\to\R^d$, and the initial condition $x_0\in\R^d$. Throughout the article the dimension $d\in\N=\{1,2,\ldots\}$ is arbitrary.

The `strength' of a L\'evy process can often be characterised by a single parameter $\alpha\in(0,2]$ called the \emph{stable index} (for various examples see \cref{Sec:ex}), with $\alpha=2$ corresponding to the usual Brownian motion.
This parameter can be used to describe the regularisation provided by $L$. Indeed, assuming a natural nondegeneracy condition on the jump measure of $L$, 
\eqref{eq:main-SDE} has a unique strong solution whenever $b$ belongs to the H\"older space $\C^\beta(\R^d)$, where $\beta$ satisfies
\begin{equ}\label{eq:exponents-WP}
	\beta>1-\frac{\alpha}{2}.
\end{equ}
This solution theory was developed in \cite{Priola1,Priola_flow,Priola2,Chen32}.

Once the well-posedness is understood, it is natural to investigate basic discretisations of the equation. The most classical Euler-Maruyama approximation of \eqref{eq:main-SDE} reads as
\begin{equation}\label{eq:EMsde}
	dX^n_t=b(X^n_{\kappa_n(t)})\,dt+dL_t,\quad X_0^n=x_0^n,
\end{equation}
with $\kappa_n(t):=\lfloor nt\rfloor n^{-1}$.
For Lipschitz $b$ the convergence  of $X^n$ to $X$ can be seen by very straightforward arguments. The $L_p$-rate of convergence obtained this way, however, deteriorates for large $p$ (see Remark \ref{rem:explanation} (ii) below for some details). Even though our main focus will be on non-Lipschitz coefficients, it is noteworthy and perhaps surprising that our method coincidentally also solves this moment issue and therefore gives new results even in the Lipschitz case.

One of the first results regarding convergence of $X^n$ to $X$ in the case of irregular $b$ is \cite{PT}. It is shown there that
\begin{equation}\label{PamenTaguchi}
	\Bigl\|\sup_{t\in[0,1]}|X_t-X^n_t|\Bigr\|_{L_p(\Omega)} \le N n^{-(\frac\beta2\wedge\frac1p)},\quad n\in\N,	
\end{equation}
for the case where $L$ is a truncated symmetric $\alpha$--stable process, $\alpha\in(1,2)$, $\beta>2-\alpha$; here $N=N(\alpha,\beta,p,d)$ is a certain positive constant.
This result was improved in \cite{KSEM2019} in three directions: first, the condition on $\beta$ is relaxed to $\beta>2/\alpha-1$; second, the rate of convergence in \eqref{PamenTaguchi}  is improved and is  $\frac\beta\alpha\wedge\frac1p$; third, the class of considered L\'evy processes  is significantly extended and additionally includes  standard isotropic stable processes, tempered stable processes, relativistic stable processes, and others.
The same rate $\frac\beta\alpha\wedge\frac1p$ is derived \cite{MX} even for multiplicative noise, under the further relaxed condition $\beta>1-\alpha/2$, under which strong solutions are known to exist.
Very recently, for the case of standard isotropic stable processes,  \cite{LevySobolev} showed that the strong $L_p$ rate of  $\frac\beta\alpha\wedge\frac1p$ holds in the whole range $\beta>1-\alpha/2$ even when $\beta$ denotes regularity only in a certain Sobolev scale. A standard example of a coefficient that possesses Sobolev but not H\"older regularity is one with discontinuity of the first kind, in this case scalar SDEs driven by a Brownian motion and a finite activity Poisson process were studied in \cite{PSz21}.

From the discussion above, the reader may notice the following gaps in the literature. 
All of the works mentioned above consider the case $\alpha\in[1,2]$; recall however that the strong well-posedness of \eqref{eq:main-SDE} is known in the whole range $\alpha\in(0,2]$. Further, the aforementioned moment issue is still present: the convergence rate becomes arbitrary slow for very large~$p$. Note that this also has the consequence that one can not deduce almost sure convergence of $|X_t-X_t^n|$ as $n\to\infty$. Indeed, to show this one has to prove the bound $\E |X_t-X_t^n|^p\le N n^{-1-\eps}$ for some $p>0$, $\eps>0$, while the best available bound is $\E |X_t-X_t^n|^p\le N n^{-1}$, which is not sufficient.

The present paper closes these gaps; the novelties  can be summarised as follows.
First, our methods are completely free from the moment issue.
As alluded to above and detailed in Remark \ref{rem:explanation} (ii) below, this makes our results new even in the smooth drifts, where the strong $L_p$ rate, which one trivially gets, namely, $\frac1\alpha\wedge\frac1p$, $p\geq 1$, is improved to $1$.
Another factor that may provide poor rate in the previous results is if $\beta/\alpha$ is small. This issue is also not present here, the obtained rate is always strictly above $1/2$.
We also obtain almost sure convergence (and rate) of the Euler-Maruyama scheme, to our knowledge for the first time for SDEs of the type \eqref{eq:main-SDE}.

Second, we are not restricted to $\alpha\in[1,2]$. In the regime $\alpha\in[2/3,2]$ our assumption on $\beta$ coincides with the optimal condition \eqref{eq:exponents-WP}. 
We cover some (but not the optimal) range of irregular drifts when $\alpha\in(1/2,2/3)$.
For $\alpha\in(0,1/2]$ we require $b$ to be regular (i.e. more than Lipschitz), the contribution in this case is the handling of high moments.

Third, we have also shown that the solution of \eqref{eq:main-SDE} is the limit of the corresponding Picard’s successive approximations. This extends to the non-smooth case the corresponding results from \cite{Yam81,Tan92,NO22}.

Fourth,  the class of considered driving L\'evy processes  is fairly large. It is similar to \cite{KSEM2019}, so it includes not only the standard stable processes but also their different ``relatives''. The conditions on the L\'evy process, see \ref{A:1}--\ref{A:3} below, are rather general and are easy to verify for various examples, see  \cref{Sec:ex}.

Finally, as a byproduct of our proofs, we obtain significant improvements in a different but related problem of numerical stochastic analysis: the question of approximating additive functionals of stochastic processes given high-frequency observations. To wit, consider an $\mathbb{R}^d$-valued stochastic process $Y$, a measurable function $f: \mathbb{R}^d \to \mathbb{R}^d$, and the 'occupation time functional'
\begin{equation}
	\Gamma = \int_0^1 f(Y_s) \, ds.
\end{equation}
Approximation of $\Gamma$ is an interesting question in itself (for a detailed overview of the literature, we refer to \cite[Section~1]{Alt}). A natural estimation scheme is given by
\begin{equation}
	\Gamma^n = \int_0^1 f(Y^n_{\kappa_n(s)}) \, ds = \frac{1}{n}\sum_{i=0}^n f(Y^n_{i/n}),
\end{equation}
where $Y^n$ is the process approximating $Y$. If the increments of $Y$ can be simulated directly, then one can, of course, take $Y^n:=Y$. For the case where $Y$ is a Markov process, whose density satisfies certain estimates (e.g., $Y$ is an $\alpha$-stable process), $Y^n=Y$, and $f$ is a bounded or H\"older continuous function, $L_p$ error bounds for $\Gamma-\Gamma^n$ were obtained in \cite{GK14, GKK15}. For the case $p=2$, Altmeyer \cite{Alt} improved the convergence rate to the one which was shown to be optimal in some setups \cite{AltG, Mark22}.

In this article, we take the best of both worlds. Namely, we obtain the $L_p$ convergence rate for the error $\Gamma-\Gamma^n$ as in \cite{Alt} but for general $p\ge2$. Furthermore, we provide $L_p$ bounds on the convergence rate of the error for the case $Y^n\neq Y$. This is relevant if  the process $Y$ cannot be simulated directly (this is often the case if $Y$ is a solution to SDE \eqref{eq:main-SDE}) and we have access only to its approximation $Y^n$ (which can be its Euler approximation \eqref{eq:EMsde}).

Our approach is rather different from the one used in the papers \cite{PT,MX,KSEM2019,LevySobolev} discussed above, as we do not rely on any form of Zvonkin transformation or It\^o-Tanaka trick (in fact, It\^o's formula is not even once applied for either $X$ or $X^n$).
Instead, we employ stochastic sewing, which originates from the work of L\^e \cite{Khoa} and has been developed for discretisation problems in \cite{ButDarGEr, BDG-SPDE, KML,le2021taming}.
In \cite{ButDarGEr} the Euler-Maruyama scheme for fractional Brownian motion-driven SDEs is studied. One may hope due to the scaling correspondance between stable indices and Hurst parameters ($\alpha\leftrightarrow 1/H$) that the methods therein translate  easily to the L\'evy case. This is unfortunately not the case, for several reasons. First, the usual regime $H<1$ corresponds to $\alpha>1$. To consider $\alpha<1$, one needs tools from the $H>1$ case, in particular the shifted stochastic sewing lemma \cite{Mate20}. Second, high moments of L\'evy processes do not scale (or they do not even exist for several examples).
This is related to the aforementioned moment issue yielding poor rate for large moments in preceding literature. Overcoming this challenge relies on the recently obtained quantitative John-Nirenberg inequality \cite{Le22}.
Finally, there is no useful form of Girsanov theorem to remove the drift at any part of the analysis.

This article deals only with the strong rate of convergence. Clearly, the weak rate of convergence is at least as good as the strong rate of convergence; already with this simple observation our results imply weak rates which are better than the ones available in the literature \cite{Platen-weak,Mikulevicius-weak} in the range of parameters that we cover. An interesting and challenging question is whether these weak rates can be improved further, and whether the range of $\beta$ can be upgraded to $\beta>1-\alpha$ (in this range SDE \eqref{eq:main-SDE} is weakly well--posed \cite{Kulik19, LZ22}).
We leave this for the future work.

\medskip
The rest of the paper is organized as follows. Our main result concerning the $L_p$ convergence of the numerical scheme is formulated in \cref{S:main}. Examples of L\'evy processes satisfying the assumptions of the convergence theorem are given in \cref{Sec:ex}. A number of technical tools needed for the proofs are collected in \cref{S:Prel}. The main results are proved in \cref{S:proofs}, whilst the proofs of some technical auxiliary statements are placed in the Appendix.

\medskip
\textbf{Acknowledgements.}
The authors are grateful to Randolf Altmeyer and Mark Podolskij for very helpful discussions regarding approximation of additive  functionals of a L\'evy process, and to Khoa L\^e for many useful conversations and for bringing to our attention the articles \cite{GK14,GKK15}. We would like to thank the referees for their helpful comments and feedback.
OB has received funding from the DFG Research Unit FOR~2402 and is funded by the Deutsche Forschungsgemeinschaft (DFG, German Research Foundation) under Germany's Excellence Strategy --- The Berlin Mathematics Research Center MATH+ (EXC-2046/1, project ID: 390685689, sub-project EF1-22).
MG was funded by the Austrian Science Fund (FWF) Stand-Alone programme P 34992.
The project has been conceived during the stay of the authors at the Hausdorff
Research Institute for Mathematics (HIM), Bonn. Significant progress on the project has been achieved during the visits of the authors to TU Wien,  Mathematisches Forschungsinstitut Oberwolfach (mini-workshop 2207c), and  Universit\`a degli Studi di Torino. We would like to thank all these institutions and their staff for providing excellent working conditions, support, and hospitality. 

\section{Main results}\label{S:main}

We begin by introducing the basic notation. 
For $\beta\in(0,1)$ and a Borel subset $Q$ of $\R^k$, $k\in\N$, let $\C^\beta(Q)$ be the corresponding H\"older space, that is, the set of functions $f\colon Q\to\R$ such that
\begin{equ}
	\|f\|_{\C^\beta(Q)}:=\sup_{x\in Q}|f(x)|+[f]_{\C^\beta(Q)}:=\sup_{x\in Q}|f(x)|+\sup_{x\neq y\in Q}\frac{|f(x)-f(y)|}{|x-y|^\beta}<\infty.
\end{equ}
With a slightly unconventional notation we set $\C^0(Q)$ to be the set of bounded measurable  functions (not necessarily continuous) equipped with the supremum norm.
The definition of the analogous spaces for $\R^d$-valued functions is simply understood coordinate-wise.
For $\beta\in[1,\infty)$ we denote by $\C^\beta(Q)$ the set of functions whose weak derivatives of order $0,1,\ldots,\lfloor \beta\rfloor$ all have representatives belonging to $\C^{\beta-\lfloor\beta\rfloor}(Q)$. In the particular case $Q = \R^d$ sometimes we will use a shorthand and write $\C^\beta$ instead of $\C^\beta(\R^d)$ in order not to overcrowd the notation.

Fix a probability space $(\Omega, \F, \P)$ and on it a $d$-dimensional L\'evy process $L$, equipped with a right-continuous, complete filtration $\bF=(\cF_t)_{t\in[0,1]}$.
The conditional expectation given $\F_t$ will be denoted by $\E^t$.
The Markov transition semigroup associated to the process $L$ is denoted by $\cP=(\cP_t)_{t\geq 0}$, and the generator of $\cP$ is denoted by $\L$.

We fix $\alpha\in(0,2]$,  $d\in\N$ and impose the following assumptions on $\cP$ and $\L$. 

\begin{assumption}{H1}[gradient type bound on the semigroup]\label{A:1}
	There exists  a constant $M$ such that for any $f\in\C^0(\R^d)$, one has 
	\begin{equation}\label{eq:K1newA}
		\|\nabla\cP_t f\|_{\C^0(\R^d)}\leq M t^{-1/\alpha} \|f\|_{\C^0(\R^d)},\quad 0<t\le1.
	\end{equation}
\end{assumption}

\begin{assumption}{H2}[action of the generator]\label{A:2}
	For $\delta=0,1$, and any $\eps>0$ there exists  a constant $M=M(\delta,\eps)$ such that for any $f\in\C^{\alpha+\delta+\eps}(\R^d)$ vanishing at infinity, one has 
	\begin{equation}\label{eq:K2newA}
		\|\L f  \|_{\C^\delta(\R^d)}\leq M \|f\|_{\C^{\alpha+\delta+\eps}(\R^d)}.
	\end{equation}
\end{assumption}

\begin{assumption}{H3}[moment conditions]\label{A:3}
	For any $p\in(0,\alpha)$, $\eps>0$, there exists a constant $M=M(p,\eps)$ such that
	\begin{equation*}
		\E [|L_t|^p\wedge1] \le M  t^{\frac{p}{\alpha}-\eps},\quad 0<t\le1.
	\end{equation*}
\end{assumption}
With some abuse of notation, in the sequel when we refer to the parameter $M$ given a process $L$ satisfying the above assumptions, we understand the collection of all of the $M$-s in H1-H3.

We provide a long list of examples of processes satisfying H1-H3 in \cref{Sec:ex}, let us here just briefly mention three of the most standard examples.
First let $\alpha\in(0,2)$ and $L$ be the standard $d$-dimensional $\alpha$--stable process. That is, $L$ is a L\'evy process whose characteristic function is 
$$
\E e^{i\langle\lambda, L_t\rangle}=e^{-tc_\alpha |\lambda|^\alpha},\quad\lambda\in\R^d,\, t\ge0,
$$ 
for some constant $c_\alpha>0$. In this case H1-H3 are satisfied.
A similar example is the  $d$-dimensional cylindrical $\alpha$--stable process, that is, a process $L$ whose coordinates are $d$ independent $1$-dimensional standard $\alpha$--stable processes. Its characteristic funtion is given by 
$$
\E e^{i\langle\lambda, L_t\rangle}=e^{-t\widetilde  c_\alpha \sum_{i=1}^d |\lambda_i|^\alpha},\quad\lambda\in\R^d,\, t\ge0,
$$
for some constant $\widetilde c_\alpha>0$. In this case H1-H3 are also satisfied.
Finally the most standard of the most standard examples is the $d$--dimensional Brownian motion, which satisfies H1-H3 with $\alpha=2$.

\medskip\textbf{Convention on the operator $\wedge$.}
The expression of the form $c_1+c_2\wedge c_3$, where $c_i\in\R$, will quite often appear in the paper. We will always mean that in this expression the minimum is taken first and then the addition, thus it equals to $c_1+(c_2\wedge c_3)=c_1+\min(c_2,c_3)$.

\medskip
\textbf{Convention on constants.} 
Throughout the paper $N$ denotes a positive constant whose value
may change from line to line; its dependence is always specified in the corresponding statement.

\medskip
We begin with the well-posedness of \eqref{eq:main-SDE}.  
As mentioned in the introduction, this is essentially known, but since none of the available results cover the whole range of exponents and generality of driving processes considered herein, we provide a short proof in the appendix. This is done for the sake of presentation as well as to highlight the usefulness of stochastic sewing for well-posedness of SDEs with jumps; our method for obtaining strong well-posedness is very different from  \cite{Priola1,Priola_flow,Priola2,Chen32}. Another advantage of our proof strategy is that we get ``for free'' that the solution of \eqref{eq:main-SDE} is the limit of Picard's successive approximations. While this fact is known for Lipschitz or essentially Lipschitz drifts  \cite[Theorem~1]{Yam81}, \cite[Theorem~2]{Tan92}, \cite[Lemma~6.2 and Theorem 6.1]{NO22}, to the best of our knowledge, this result is new for H\"older continuous drifts.

Define successively a sequence of approximations $Y^{(0)}(t):=\eta+L_t$, $t\in[0,1]$,
$$
Y^{(n+1)}(t)=\eta+\int_0^t b(Y^{(n)}(s))\,ds+L_t,\quad t\in[0,1],\, n\in\Z_+.
$$ 
\begin{theorem}\label{L:SEU}
	Suppose that $L$ satisfies \ref{A:1}--\ref{A:3}. Let $\eta$ be a $\F_0$--measurable random vector taking values in $\R^d$. Suppose additionally that
	\begin{equation}\label{eq:main-exponent}
		\beta>\Big(1-\frac{\alpha}{2}\Big)\vee\Big(2-2\alpha\Big)
	\end{equation}
	and let $b\in \cC^\beta(\R^d,\R^d)$. 
	Then equation \eqref{eq:main-SDE}  with the initial condition $X_0=\eta$ has a unique strong solution. 	
	Furthermore, this solution $X$ is the limit of the Picard iterations, namely
	\begin{equation*}
		\|X-Y^{(n)}\|_{\C^0([0,1])}\to0\quad \text{as $n\to\infty$\,\, a.s. and in $L_p(\Omega)$, $p\ge1$}. 
	\end{equation*}
\end{theorem}
Now we are ready to present our main results: the $L_p$ convergence of the Euler--Maruyama scheme with an explicit rate. In the statements below, $X$ is a solution to \eqref{eq:main-SDE} with the initial condition $x_0\in \mathbb{R}^d$, and $X^n$, $n\in \mathbb{Z}_+$, is its Euler approximation with the initial condition $x_0^n\in \mathbb{R}^d$, which solves \eqref{eq:EMsde}.

\begin{theorem}\label{T:main} 
	Suppose that $L$ satisfies \ref{A:1}--\ref{A:3} and that \eqref{eq:main-exponent} holds. 
	Let $b\in \cC^\beta(\R^d,\R^d)$, $p>2$, $\eps>0$. Then there exists a constant $N=N(d,\alpha,\beta,p,\eps,M,\|b\|_{\cC^\beta})$ such that for all $n\in\N$ the following bound holds:
	\begin{equation}\label{eq:main-rate}
		\Big\|\|X-X^n\|_{\C^{0}([0,1])}\Big\|_{L_p(\Omega)}\leq N n^{-\big(\frac{1}{2}+\frac{\beta}{\alpha}\wedge\frac12\big)+\eps}+N|x_0-x^n_0|.
	\end{equation}
\end{theorem}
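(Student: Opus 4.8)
\medskip
\noindent\emph{Proof strategy.}
The plan is to control the difference $\psi_t:=X_t-X_t^n$. By \eqref{eq:main-SDE} and \eqref{eq:EMsde} the process $\psi$ has no jump part and satisfies $\psi_0=x_0-x_0^n$ and
\[
\psi_t-\psi_s=\int_s^t\big(b(X_r)-b(X^n_{\kappa_n(r)})\big)\,dr,\qquad 0\le s\le t\le1;
\]
moreover $|\psi_t|\le|x_0-x_0^n|+2\|b\|_{\C^0}$, so all moments of $\psi$ are finite even when those of $L$, hence of $X^n$, are not. It thus suffices to establish an $L_p(\Omega)$-increment bound of the form
\[
\big\|\psi_t-\psi_s\big\|_{L_p(\Omega)}\le N\Big(n^{-\theta+\eps}+\big(\textstyle\sup_{u\le1}\|\psi_u\|_{L_p(\Omega)}\big)^{\beta\wedge1}\Big)|t-s|^{\frac12+\eps},\qquad\theta:=\tfrac12+\tfrac\beta\alpha\wedge\tfrac{\nalpha}{\alpha p}\wedge\tfrac12,
\]
together with the same bound, taken with $s=0$, for $\|\psi_t\|_{L_p(\Omega)}$. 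Granted this, a buckling argument on a sufficiently short interval (iterated finitely often over $[0,1]$) --- a genuine Gronwall step when $\beta\ge1$, and its distributional-drift counterpart when $\beta<1$, as discussed below --- absorbs the $\psi$-dependent term and gives $\sup_{t\le1}\|\psi_t\|_{L_p(\Omega)}\le N(n^{-\theta+\eps}+|x_0-x_0^n|)$; feeding this back into the increment bound and invoking the Kolmogorov continuity criterion (legitimate since $\psi$ is continuous) yields \eqref{eq:main-rate}.

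\medskip
The increment bound is proved by stochastic sewing. Decompose
\[
b(X_r)-b(X^n_{\kappa_n(r)})=\underbrace{\big(b(X_r)-b(X^n_r)\big)}_{\text{stability}}+\underbrace{\big(b(X^n_r)-b(X^n_{\kappa_n(r)})\big)}_{\text{consistency}},
\]
and to each piece associate the germ $A_{s,t}:=\E^s\int_s^t(\cdots)\,dr$, whose sewn process is the genuine integral, so that $\big\|\int_s^t(\cdots)\,dr\big\|_{L_p}\le\|A_{s,t}\|_{L_p}+\|\text{(sewing defect)}\|_{L_p}$. Writing $\delta A_{s,u,t}:=A_{s,t}-A_{s,u}-A_{u,t}=(\E^s-\E^u)\int_u^t(\cdots)\,dr$, the stochastic sewing lemma of \cite{Khoa} --- in the shifted form of \cite{Mate20} when $\alpha\le1$ --- requires checking $\|\E^s\delta A_{s,u,t}\|_{L_p}\le N_1|t-s|^{1+\eps}$ and $\|\delta A_{s,u,t}\|_{L_p}\le N_2|t-s|^{1/2+\eps}$ and then bounds the defect by $N_1|t-s|^{1+\eps}+N_2|t-s|^{1/2+\eps}$; it remains to track the $n$-dependence of $N_1$, $N_2$ and $\|A_{s,t}\|_{L_p}$.

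\medskip
This is where the three contributions to $\theta$ appear. Conditioning $b(X^n_r)-b(X^n_{\kappa_n(r)})$ at the grid point $\kappa_n(r)$ expresses its conditional mean via $\cP_{r-\kappa_n(r)}b-b=\int_0^{r-\kappa_n(r)}\L\cP_v b\,dv$ (plus a harmless $O(n^{-1})$ drift correction); estimating this through \ref{A:2} and the smoothing of $\cP$ built from \ref{A:1} forces one to first mollify $b$ at a scale balanced against $n^{-1}$ and produces the factor $n^{-\beta/\alpha+\eps}$. Crucially, the consistency defect is confined to a single step of length $n^{-1}$ whereas the conditioning horizon $r-s$ may be much larger, so in the germ estimate $\|A_{s,t}\|_{L_p}$ one gains an extra $|t-s|^{1/2}$ (after separating the regime $|t-s|\lesssim n^{-1}$), and in $\delta A_{s,u,t}=(\E^s-\E^u)\int_u^t(\cdots)\,dr$ --- a Riemann-sum error of a noise-driven integral, once the $\E^u$-predictable part is removed --- a Burkholder--Davis--Gundy/Rosenthal estimate over the grid cells supplies an additional factor $n^{-1/2}$; these cancellations are the source of the $\tfrac12$ in $\theta$. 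Finally, since $|b(X^n_r)-b(X^n_{\kappa_n(r)})|\le N\big(1\wedge|L_r-L_{\kappa_n(r)}|^{\beta}\big)$ and $L$ need not have high moments, the $L_p$-norms entering $N_2$ are controlled only through $\E[|L_h|^{p\beta}\wedge1]\le Nh^{(p\beta/\alpha)\wedge(\nalpha/\alpha)-\eps}$ from \ref{A:3}, which turns $\beta/\alpha$ into $\beta/\alpha\wedge\nalpha/(\alpha p)$ --- this is exactly what keeps the rate uniformly above $\tfrac12$ and what produces the dependence on $p$. The stability term $b(X_r)-b(X^n_r)$ is treated by the same machinery, now using strong well-posedness and the conditional moment and smoothing estimates for $X$ from \cref{L:SEU} and \cref{S:Prel}; every bound then carries an amplitude measuring $\|X_r-X^n_r\|$, which yields the $(\sup_u\|\psi_u\|_{L_p})^{\beta\wedge1}$ prefactor --- linear in $\psi$ when $\beta\ge1$, so the buckling is classical, whereas for $\beta<1$ one has $b(\cdot+z)-b(\cdot)\in\C^{\beta-1}$ only, and the stability estimate must be carried out in the regularisation-by-noise framework for distributional drifts (which is what effectively linearises the $\psi$-dependence and makes the buckling work); this is precisely where the threshold $\beta>1-\alpha/2$ of \eqref{eq:main-exponent} enters.

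\medskip
The main obstacle I anticipate is reconciling the sewing estimates with two structural limitations. When $\alpha\le1$ the noise increment over one time step is large relative to the step length, so the exponent $\tfrac12+\eps$ in the unconditional sewing hypothesis is not available for the stability germ conditioned at the left endpoint; one must instead use the \emph{shifted} stochastic sewing lemma of \cite{Mate20}, conditioning at a time point displaced by a fixed multiple of $|t-s|$, and arrange that these shifts stay compatible as the sewing partition is refined dyadically. Simultaneously, since genuine $q$-th moments of $L$ may not exist, every moment estimate must be kept in the truncated form $\E[|\cdot|^q\wedge1]$ dictated by \ref{A:3}. Making these two features coexist inside the sewing computations, while carrying out the distributional-drift stability analysis for small $\beta$ and choosing the mollification scale of $b$ uniformly in $p$ --- which is what removes the moment issue of the introduction and floors the rate at $\tfrac12$ --- is the delicate part of the argument.
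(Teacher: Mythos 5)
Your overall architecture (stochastic sewing in the shifted form, conditional a priori regularity of the drift parts, a short-interval buckling iterated over $[0,1]$, then Kolmogorov continuity) is indeed the paper's strategy, and you correctly locate the sources of the three contributions to the exponent. However, there are two genuine gaps at exactly the places where the actual proof has to work hardest. First, your master increment inequality carries the prefactor $(\sup_u\|\psi_u\|_{L_p})^{\beta\wedge 1}$, and for $\beta<1$ such a sublinear bound cannot buckle to the stated rate: from $f\le a+c f^{\beta}$ with $c=NT^{1/2+\eps}$ independent of $n$ one cannot get below the fixed point $\sim c^{1/(1-\beta)}$, which does not decay in $n$, so no choice of the interval length rescues the argument. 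You acknowledge that the $\psi$-dependence must be ``linearised'' by regularisation by noise, but this is not an add-on to be supplied later; it \emph{is} the stability estimate. In the paper this is done by building the germ out of $\cP_{r-s_1}b$ evaluated at $L$ plus \emph{conditional expectations} of the perturbations, so that by \ref{A:1} the relevant function is $\C^1$ with norm $\lesssim (r-s_1)^{\frac{(\beta-1)\wedge 0}{\alpha}}$ and the resulting bound (\cref{L:32}) is \emph{linear} in $\db{\varphi-\varphi^n}_{\scC^0_p}$ and in a $1/2$-H\"older-in-$L_p$ seminorm; the condition $\beta>1-\alpha/2$ enters precisely to make the exponent $1+\frac{(\beta-1)\wedge0}{\alpha}$ exceed $1/2$. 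Without this the Gronwall step for $\beta<1$ does not close, and the claimed rate is not reached.

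Second, your two-term decomposition lumps together what the paper must split: $b(X^n_r)-b(X^n_{\kappa_n(r)})$ contains both the noise increment $L_r-L_{\kappa_n(r)}$ and the drift-perturbation increment $\varphi^n_r-\varphi^n_{\kappa_n(r)}$ (so already the bound $|b(X^n_r)-b(X^n_{\kappa_n(r)})|\le N(1\wedge|L_r-L_{\kappa_n(r)}|^{\beta})$ is not quite correct). The paper treats these as two separate errors, $\cE^{n,2}$ and $\cE^{n,3}$, and for the perturbation part a real obstruction appears: the ordinary seminorm $[\varphi^n-\varphi^n_{\kappa_n(\cdot)}]_{\scC^{\gamma}_p,[s,t]}$ is \emph{infinite} for every $\gamma>0$ (see \cref{rem:WhyImportant}), so the sewing input has to be measured in the conditional seminorm $\dnew{\cdot}{1/2}{p}{[s,t]}$, for which one proves the key gain \eqref{goodnormbound} of order $n^{-\frac12-(\frac\beta\alpha\wedge\frac{\nalpha}{\alpha p}\wedge\frac12)+\eps}$. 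Your BDG/Rosenthal-over-grid-cells heuristic addresses only the noise-increment mechanism (the analogue of $\cE^{n,3}$, which in the paper is handled instead through the semigroup difference $\cP_{r-\cdot}-\cP_{\kappa_n(r)-\cdot}$ and \eqref{eq:K1A}) and does not confront this degeneracy. Relatedly, the conditional a priori bounds $\|\varphi_t-\E^s\varphi_t\|_{L_q(\Omega)|\F_s}\lesssim|t-s|^{1+\frac{\beta\wedge1}{\alpha}\wedge\frac{\nalpha}{\alpha q}-\eps}$ (and the same for $\varphi^n$) are not available from \cref{L:SEU} or \cref{S:Prel}; they must be proved by a separate bootstrap (this is where the term $2-\alpha-\nalpha$ in \eqref{eq:main-exponent} originates), so citing them as ready-made inputs leaves a further hole in the argument.
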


One can also show the almost sure convergence of $X^n$ to $X$. 

\begin{corollary}\label{C:main}  Suppose that $L$ satisfies \ref{A:1}--\ref{A:3} and $\beta$ satisfies \eqref{eq:main-exponent}. Let  $b\in \cC^\beta(\R^d,\R^d)$. Take $x_0^n=x_0$ for all $n\in\N$. Then for any $\eps>0$, 
	there exists an a.s. finite random variable $\eta$ such that for any $n\in\N$, $\omega\in\Omega$
	\begin{equation*}
		\|X(\omega)-X^n(\omega)\|_{\C^{0}([0,1])} \leq \eta(\omega) n^{-\big(\frac{1}{2}+\frac{\beta}{\alpha}\wedge\frac12\big)+\eps}.
	\end{equation*}	
\end{corollary}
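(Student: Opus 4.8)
The plan is a routine Chebyshev--Borel--Cantelli argument built directly on \cref{T:main}. Fix $\tau\in[0,1/2)$ and $\eps>0$; since decreasing $\eps$ only strengthens the claim, I would assume without loss of generality that $\eps<1-2\tau$, and then choose $p>2$ large enough that both
$$\frac12-\frac1p-\frac\eps2\ \ge\ \tau\tand \frac{p\eps}{2}\ >\ 1.$$
Both conditions merely require $p$ large (the left side of the first inequality tends to $\frac12-\frac\eps2>\tau$ as $p\to\infty$), so they are compatible. I would also record the elementary fact that $\|f\|_{\C^\tau([0,1])}\le\|f\|_{\C^{\tau'}([0,1])}$ for $0\le\tau\le\tau'$, which holds because $|t-s|^{\tau}\ge|t-s|^{\tau'}$ for $s,t\in[0,1]$.

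Next I would apply \cref{T:main} with this $p$ and with $\eps/2$ in place of $\eps$. Since $x_0^n=x_0$, the term $N|x_0-x_0^n|$ vanishes, and together with the embedding above and the trivial bound $\frac\beta\alpha\wedge\frac\nalpha{\alpha p}\wedge\frac12\ge 0$ this yields, for all $n\in\N$,
$$\E\,\|X-X^n\|_{\C^\tau([0,1])}^p\ \le\ \E\,\|X-X^n\|_{\C^{1/2-1/p-\eps/2}([0,1])}^p\ \le\ N^p\,n^{-p\left(\frac12+\frac\beta\alpha\wedge\frac\nalpha{\alpha p}\wedge\frac12\right)+\frac{p\eps}2}\ \le\ N^p\,n^{-\frac p2+\frac{p\eps}2},$$
with $N$ the (deterministic) constant from \cref{T:main}; in particular $\|X-X^n\|_{\C^\tau([0,1])}<\infty$ almost surely for each fixed $n$. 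Then, by Chebyshev's inequality,
$$\P\!\left(\|X-X^n\|_{\C^\tau([0,1])}>n^{-\frac12+\eps}\right)\ \le\ n^{p(\frac12-\eps)}\,\E\,\|X-X^n\|_{\C^\tau([0,1])}^p\ \le\ N^p\,n^{-\frac{p\eps}2},$$
and the right-hand side is summable in $n$ because $\frac{p\eps}2>1$. The Borel--Cantelli lemma then supplies an almost surely finite random index $n_0$ such that $\|X(\omega)-X^n(\omega)\|_{\C^\tau([0,1])}\le n^{-\frac12+\eps}$ for all $n\ge n_0(\omega)$.

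Finally I would set $\eta(\omega):=\sup_{n\in\N}n^{\frac12-\eps}\,\|X(\omega)-X^n(\omega)\|_{\C^\tau([0,1])}\in[0,+\infty]$. On the full-measure event where $n_0$ is finite and every $\|X-X^n\|_{\C^\tau([0,1])}$ is finite, this supremum splits into the maximum over the finitely many indices $n<n_0(\omega)$ of finite quantities and a supremum over $n\ge n_0(\omega)$ bounded by $1$; hence $\eta<\infty$ there, i.e.\ $\eta$ is almost surely finite, and the inequality $\|X(\omega)-X^n(\omega)\|_{\C^\tau([0,1])}\le\eta(\omega)\,n^{-\frac12+\eps}$ holds for every $n\in\N$ and every $\omega\in\Omega$ by the definition of $\eta$. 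I do not anticipate any real obstacle here: the entire content is \cref{T:main}, and the only minor point is that the finitely many constraints on $p$ are all of the form ``$p$ sufficiently large'' and hence can be met simultaneously.
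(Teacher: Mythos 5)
Your proof is correct and takes essentially the same route as the paper: pick $p$ large so that $\frac12-\frac1p-\frac\eps2\ge\tau$ and $\frac{p\eps}{2}>1$, apply \cref{T:main} with a reduced $\eps$, and exploit the summability of $n^{-p\eps/2}$. The only cosmetic difference is that the paper bounds $\E\,\eta^p$ directly by the series (so that $\eta$ is even in $L_p(\Omega)$), whereas you pass through Chebyshev and Borel--Cantelli to conclude almost sure finiteness of $\eta$; both arguments are valid.
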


The next theorem gives strong $L_p$ rate of convergence of approximations of additive functionals of a L\'evy process.

\begin{theorem}\label{t:quadr}
	Suppose that $L$ satisfies \ref{A:1}--\ref{A:3}. 
	Let $p\in[1,\infty)$, $\eps>0$, $\theta\in[0,1]$, $f\in \C^\theta(\R^d,\R^d)$. Then there exists a constant $N=N(d,\alpha,p,\theta,\eps,M)$ such that for all $n\in\N$ the following bound holds:
	\begin{equation}\label{eq:levy-rate}
		\Bigl\|\sup_{t\in[0,1]}\Bigl|\int_0^t f(L_r)\,dr -\int_0^t f(L_{\kappa_n(r)})\,dr\Bigr|\,\Bigr\|_{L_p(\Omega)}\leq N\|f\|_{\C^\theta} n^{-\big(\frac{1}{2}+\frac{\theta}{\alpha}\wedge\frac12\big)+\eps}.
	\end{equation}	
\end{theorem}	

In the case $p=2$ this rate is proven in \cite[Theorem~11]{Alt}. For general $p$, a worse rate $\frac{1}{2}+\frac{\theta}{2\alpha}\wedge\frac{1}{2}-\eps$ is proved in \cite[Theorem~2.2,~Section 2.4]{GK14}. For special cases of $f$ (indicator function or Dirac-$\delta$) even central limit theorems are available in the literature, see e.g. \cite[Theorem~2.3]{Mark22}, and also \cite{AltG}.

The next theorem concerns approximations of additive functionals of the solution $X$ of an SDE driven by a L\'evy process. In this case the underlying process $X$ can not be simulated directly, and one has to use the corresponding Euler approximation $X^n$. We were not able to find any relevant results in this direction in the literature.

\begin{theorem}\label{t:altm}
	Suppose that $L$ satisfies \ref{A:1}--\ref{A:3} and that \eqref{eq:main-exponent} holds. Let $b\in \cC^\beta(\R^d,\R^d)$, $f\in \cC^\theta(\R^d,\R^d)$, where
	\begin{equation}\label{eq:main-exponentth}
		\theta>\Big(1-\frac{\alpha}{2}\Big)\vee\Big(2-2\alpha\Big).
	\end{equation}
	Let $p\in[1,\infty)$, $\eps>0$. Then there exists a constant $N=N(d,\alpha,\beta,\theta, p,\eps,M,\|b\|_{\cC^\beta})$ such that for all $n\in\N$ the following bound holds:
	\begin{equation}\label{eq:altmeyer-rate}
		\Bigl\|\sup_{t\in[0,1]}\Bigl|\int_0^t f(X_r)\,dr -\int_0^t f(X^n_{\kappa_n(r)})\,dr\Bigr|\,\Bigr\|_{L_p(\Omega)}\leq N\|f\|_{\C^\theta} n^{-\big(\frac{1}{2}+\frac{\beta\wedge\theta}{\alpha}\wedge\frac12\big)+\eps}+N\|f\|_{\C^\theta}|x_0-x^n_0|.
	\end{equation}
\end{theorem}

The proofs of these statements are given in \cref{S:proofs}.

\begin{remark}\label{r:shift}	
	For any $\kappa\in\R^d$, equations  \eqref{eq:main-SDE} and \eqref{eq:EMsde} can be rewritten as:
	$$
	dX_t= \widetilde b(X_t)\,dt +d \widetilde L_t;\qquad
	dX^n_t= \widetilde b(X^n_{\kappa_n(t)})\,dt +d \widetilde L_t,
	$$
	where $\widetilde b(x):=b (x)-\kappa$, $\widetilde L_t:=L_t+\kappa t$. Clearly, if $b\in\C^\beta$, then $\wt b \in\C^\beta$.  Therefore, if for some  $\kappa\in\R^d$  the process $(L_t+\kappa t)_{t\in[0,1]}$ satisfies \ref{A:1}--\ref{A:3}, then \cref{T:main,C:main,t:altm} hold (provided that all the other conditions on $\beta$, $b$, $p$ are met).	
\end{remark}

\begin{remark}\label{rem:explanation}
	\Cref{fig} shows the region where \cref{T:main} guarantees the convergence of the Euler scheme. Let us give some context for some of the different regimes of interest for the exponents.
	\begin{enumerate}[(i)]
		\item In the regime $\alpha\in[2/3,2]$, \eqref{eq:main-exponent} coincides with the well-known condition \eqref{eq:exponents-WP}, and thus \cref{T:main} establishes strong convergence in the optimal range of $\beta$.
		We also remark that the threshold $2/3$ appears in the theory of L\'evy driven SDEs from time to time, e.g., in \cite{Chen32, Menozzi_McKean}. 
		We are unsure whether there is some connection between these appearances, or if this is just an instance of the ``\emph{law of small numbers}''.

		\item The result is new even in the case of smooth drift. In the regime $\beta\geq 1$ the drift is regular enough to make the strong well-posedness of \eqref{eq:main-SDE} trivial. Furthermore, it is easy to get \emph{some} rate of convergence from elementary arguments: assuming for simplicity $x_0=x_0^n$, by Gronwall's lemma one has almost surely
		\begin{equ}
			\sup_{t\in[0,1]}|X_t-X^n_t|\leq e^{[b]_{\cC^1}}\|b\|_{\cC^1}\int_0^1\big(|X^n_t-X^n_{\kappa_n(t)}|\wedge 1\big)\,dt.
		\end{equ}
		This yields for any $p\geq 1$, $\eps>0$,
		\begin{equ}\label{eq:Lipschitz error}
			\big\|\sup_{t\in[0,1]}|X_t-X^n_t|\big\|_{L_p(\Omega)}\leq N n^{-\big(\frac{1}{\alpha}\wedge\frac{1}{p}\big)+\eps},
		\end{equ}
		with some constant $N=N(\alpha,p,\eps,M,\|b\|_{\cC^1})$.
		This provides little control for high moments of the error; further this does not allow to get almost sure rate of convergence. 
		This is markedly improved by \cref{T:main} and \cref{C:main}.

		\item Whenever $\beta\geq\alpha/2$ (which is enforced by \eqref{eq:main-exponent} for all $\alpha\leq 1$), the minimum in $\frac{\beta}{\alpha}\wedge\frac{1}{2}$ is the second term, and so in this case the expression for the $L_p$-rate simplifies to $1-\eps$. Thus in this regime we recover the best possible (up to~$\eps$) rate for an Euler-type approximation.
		
	\end{enumerate}
\end{remark}

\begin{figure}[h]
	\begin{center}
			\begin{tikzpicture}[scale=0.65]
					\foreach \i in {6,6.1,...,12}
					\foreach \j in {0,0.1,...,6}
					{\pgfmathsetmacro{\redd}{0.9*(0.5+min(0.5,\j/\i))}
							\pgfmathsetmacro{\bluee}{1.3-\redd}
							\definecolor{puty}{rgb}{\redd,0.3,\bluee}
							\fill[puty] (\i,\j)--(\i+0.1,\j)--(\i+0.1,\j+0.1)--(\i,\j+0.1);}
					
				\fill[bigpart] (0,13)--(0,12)--(4,4)--(6,3)--(12,6)--(12,13);
				\fill[white] (0,0)--(6,3)--(12,0);
				
				\draw[ggreen, line width=1.2mm,line cap=round] (0,12) -- (4,4) -- (12,0);
				\draw[very thick, dashed] (0,6)--(12,0);
				\draw[dotted] (0,6)--(12,6);
				\draw[thick] (0,0)--(13,0);
				\draw[thick] (0,0)--(0,13);
				\draw (3,0.1)--(3,-0.1);
				\draw (4,0.1)--(4,-0.1);
				\draw (6,0.1)--(6,-0.1);
				\draw (12,0.1)--(12,-0.1);
				\draw (0.1,6)--(-0.1,6);
				\draw (0.1,12)--(-0.1,12);
				\node at (-0.5,12) {\scriptsize $2$};
				\node at (-0.5,6) {\scriptsize $1$};
				\node at (3,-0.5) {\scriptsize $1/2$};
				\node at (4,-0.5) {\scriptsize $2/3$};
				\node at (6,-0.5) {\scriptsize $1$};
				\node at (12,-0.5) {\scriptsize $2$};
				\node at (6,-1.5) {$\alpha$};
				\node at (-1.5,6) {$\beta$};
			\end{tikzpicture}
	\end{center}
	\caption{Convergence rates. \protect\tikz[baseline=-3]\protect
		\draw[ggreen, line width=1.2mm,line cap=round] (0,0)--(0.8,0);
		is the required lower bound for $\beta$;
		\protect\tikz[baseline=-3]\protect\draw[very thick, dashed] (0,0)--(0.8,0);
		is the classical condition \eqref{eq:exponents-WP};
		shading indicates rate of $L_p$-convergence from $1/2$
		(\protect\tikz[baseline=-3]\protect
		\fill[rounded corners, ccorner] (0,-0.2)--(0,0.2)--(0.8,0.2)--(0.8,-0.2)--(0,-0.2)--(0,0.2);)
		to $1$ (\protect\tikz[baseline=-3]\protect
		\fill[rounded corners,bigpart] (0,-0.2)--(0,0.2)--(0.8,0.2)--(0.8,-0.2)--(0,-0.2)--(0,0.2);).
	}\label{fig}
\end{figure}
In the case when $L$ is a standard Brownian motion, \cref{T:main} is consistent with the results of \cite{ButDarGEr} in the case $H=1/2$.

An implementation of the Euler--Maruyama  scheme \eqref{eq:EMsde} would also require simulation of the driving L\'evy process $L$. This is a well--studied problem, and  many techniques and tricks are available in the literature. We do not discuss this problem here but rather refer the reader to a concise collection of methods for simulating $L$  provided in \cite[Section~14]{Papapa}.

\subsection{Examples of L\'evy processes satisfying the main assumptions}\label{Sec:ex}

Let us present some examples of L\'evy processes satisfying \ref{A:1}--\ref{A:3}. 
Denote by $\Phi$ the characteristic exponent (symbol) of $L$, that is, 
\begin{equation*}
\E e^{i \langle \lambda,L_t\rangle}=e^{-t \Phi(\lambda)},\quad \lambda\in\R^d,\,t\ge0.
\end{equation*}
Recall that (see, e.g., \cite[Corollary~2.4.20]{Iphone}) $\Phi$ can be written in the form
$$
\Phi(\lambda)=-i\langle a,\lambda\rangle+\frac12 \langle \lambda, Q\lambda\rangle+\int_{\R^d}(1-e^{i\langle \lambda,y\rangle}+i\langle \lambda,y\rangle\one_{|y|\le1})\,\nu(dy),\quad \lambda\in\R^d,
$$
where $a\in\R^d$, $Q\in\R^{d\times d}$ is a positive semidefinite matrix, and $\nu$ is a $\sigma$-finite measure on $\R^d$ such that $\nu(\{0\}) = 0$ and $\int_{\R^d} (1\wedge|y|^2)\,\nu(dy) <\infty$. It is common to refer to $(a,Q,\nu)$  as the generating triplet of $L$.
We begin with general sufficient conditions on $\Phi$, and then move on to the specific examples. By $\Re\Phi$ we mean the real part of $\Phi$.

\begin{proposition}\label{P:RePhibound}	Assume that for some $\alpha\in(0,2)$, $c_1,c_2,N>0$  the symbol $\Phi$ satisfies 
\begin{equation}\label{phibound}
	c_1|\lambda|^\alpha\le \Re\Phi(\lambda)\le c_2|\lambda|^\alpha,\quad \text{when $|\lambda|>N$}.
\end{equation}
Then the following hold:
\begin{enumerate}[$($i$)$]
	\item if $\alpha\in[1,2)$, then  \ref{A:1} and \ref{A:2} are satisfied for the process $L$;
	\item if $\alpha\in(0,1)$, then $\int_{|y|\le1}|y|\nu(dy)<\infty$ and \ref{A:1} and \ref{A:2} are satisfied for the process $\wt L_t:=L_t+\kappa t$, where $\kappa=-a+\int_{|y|\le1}y\nu(dy)$.
\end{enumerate} 
\end{proposition}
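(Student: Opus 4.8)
\textbf{Plan of proof for \cref{P:RePhibound}.}
The strategy is to translate the spectral bound \eqref{phibound} into pointwise estimates on the transition kernel of $L$ (respectively of $\wt L$) and its gradient, from which \ref{A:1} follows by a convolution argument, and then to identify the generator $\L$ explicitly and bound its action on H\"older spaces in Fourier variables, giving \ref{A:2}.

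First I would address \ref{A:1}. Let $p_t$ denote the law of $L_t$; by Fourier inversion $p_t$ has a density $\rho_t(x)=(2\pi)^{-d}\int_{\R^d}e^{-i\scal{\lambda,x}}e^{-t\Phi(\lambda)}\,d\lambda$ provided the integral converges, which it does for $t>0$ thanks to the lower bound $\Re\Phi(\lambda)\gtrsim|\lambda|^\alpha$ for $|\lambda|>N$ (and the elementary fact that $\Re\Phi\ge0$ everywhere, so the large-$|\lambda|$ decay controls integrability). Splitting the $\lambda$-integral into $|\lambda|\le N$ and $|\lambda|>N$ and using $|e^{-t\Phi(\lambda)}|=e^{-t\Re\Phi(\lambda)}$, one gets for $t\in(0,1]$ the bounds
\begin{equ}
\|\rho_t\|_{\C^0}\le N,\qquad \|\nabla\rho_t\|_{L_1(\R^d)}\le N t^{-1/\alpha},
\end{equ}
where for the second bound one differentiates under the integral, bounds $|\lambda|e^{-c_1 t|\lambda|^\alpha}$ after the change of variables $\lambda\mapsto t^{-1/\alpha}\lambda$ on the region $|\lambda|>N$, and treats the compact region crudely (its contribution is $O(1)$, hence $O(t^{-1/\alpha})$ for $t\le1$). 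Then $\cP_t f=f*\rho_t$ (up to reflection), so $\nabla\cP_t f=f*\nabla\rho_t$ and Young's inequality gives $\|\nabla\cP_t f\|_{\C^0}\le\|f\|_{\C^0}\|\nabla\rho_t\|_{L_1}\le Mt^{-1/\alpha}\|f\|_{\C^0}$, which is \eqref{eq:K1newA}. In case (ii), $\alpha\in(0,1)$: here one must first check $\int_{|y|\le1}|y|\,\nu(dy)<\infty$; this is a standard consequence of \eqref{phibound}, since the bound $\Re\Phi(\lambda)\le c_2|\lambda|^\alpha$ near infinity together with the L\'evy--Khintchine form forces $\int(1\wedge|y|^2)\nu(dy)<\infty$ and, more precisely, a Tauberian/scaling comparison of $\Re\Phi(\lambda)$ with $\int(1-\cos\scal{\lambda,y})\nu(dy)$ yields $\int_{|y|\le1}|y|^\alpha\nu(dy)<\infty$ hence (as $\alpha<1$) $\int_{|y|\le1}|y|\,\nu(dy)<\infty$; I would write $\wt L_t=L_t+\kappa t$ with $\kappa=-a+\int_{|y|\le1}y\,\nu(dy)$, which cancels the compensator and the drift so that the symbol of $\wt L$ is $\wt\Phi(\lambda)=\frac12\scal{\lambda,Q\lambda}+\int(1-e^{i\scal{\lambda,y}})\nu(dy)$, and crucially $\Re\wt\Phi=\Re\Phi$, so \eqref{phibound} is unchanged and the same kernel estimates apply to $\wt L$. (The point of subtracting $\kappa t$ is purely that for $\alpha<1$ the original drift/compensator would otherwise spoil the generator bound, not the gradient bound.)

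Next, \ref{A:2}. One identifies $\L$ as the Fourier multiplier with symbol $-\Phi$ (respectively $-\wt\Phi$ for $\wt L$): on nice functions, $\widehat{\L f}(\lambda)=-\Phi(\lambda)\widehat f(\lambda)$. Splitting $-\Phi=-\Phi\,\chi(\lambda)-\Phi(1-\chi(\lambda))$ with $\chi$ a smooth cutoff to $\{|\lambda|\le 2N\}$, the low-frequency piece $-\Phi\chi$ is a smooth compactly supported symbol hence defines a bounded operator on every $\C^\delta$; the high-frequency piece has symbol bounded by $|\Phi(\lambda)(1-\chi(\lambda))|\lesssim|\lambda|^\alpha$ (using the upper bound in \eqref{phibound} for the real part, and a matching bound for the imaginary part which follows from the L\'evy--Khintchine representation under the same hypothesis — indeed $|\mathrm{Im}\,\Phi(\lambda)|\lesssim|a||\lambda|+\int|{\scal{\lambda,y}\one_{|y|\le1}-\sin\scal{\lambda,y}}|\nu(dy)+\ldots\lesssim|\lambda|^\alpha$ for $|\lambda|\ge1$ after using $\int_{|y|\le1}|y|^\alpha\nu(dy)<\infty$; for case (ii) this is cleaner since the drift term is gone). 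So $\L$ is, modulo a bounded operator, a Fourier multiplier of order $\alpha$. The estimate \eqref{eq:K2newA}, namely $\|\L f\|_{\C^\delta}\le M\|f\|_{\C^{\alpha+\delta+\eps}}$, is then the statement that an order-$\alpha$ multiplier maps $\C^{\alpha+\delta+\eps}\to\C^\delta$ with loss $\eps$; I would prove this by a Littlewood--Paley / Bernstein decomposition: writing $f=\sum_j\Delta_j f$, one has $\|\L\Delta_j f\|_{\C^\delta}\lesssim 2^{j\alpha}\|\Delta_j f\|_{\C^\delta}\lesssim 2^{j\alpha}2^{-j(\alpha+\delta+\eps)}\|f\|_{\C^{\alpha+\delta+\eps}}2^{j\delta}$, wait — more carefully, $\|\Delta_j f\|_{\C^0}\lesssim 2^{-j(\alpha+\delta+\eps)}\|f\|_{\C^{\alpha+\delta+\eps}}$ and a Bernstein inequality promotes the multiplier bound, giving $\|\L\Delta_j f\|_{\C^\delta}\lesssim 2^{j(\alpha+\delta)}\|\Delta_j f\|_{\C^0}\lesssim 2^{-j\eps}\|f\|_{\C^{\alpha+\delta+\eps}}$, and summing the geometric series in $j\ge0$ (the $j<0$ term is handled by the low-frequency bounded piece) yields \eqref{eq:K2newA}. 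One should double-check the endpoint cases $\delta=1$ (derivatives land inside the multiplier, no issue since $\nabla$ commutes with Fourier multipliers) and the fact that $\C^\delta$ Besov-type and classical H\"older coincide for $\delta\in(0,1)$ and for integer $\delta$ in the Zygmund sense — this is where the $\eps$-loss is genuinely needed.

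The main obstacle, I expect, is the imaginary part of the symbol: the hypothesis \eqref{phibound} only controls $\Re\Phi$, whereas both the kernel-gradient estimate and the generator bound a priori need control of $|\Phi|$ (equivalently of $\mathrm{Im}\,\Phi$) at high frequencies. For the gradient bound this is not actually needed — $|e^{-t\Phi}|$ depends only on $\Re\Phi$, so $\nabla\rho_t$ is estimated purely through $\Re\Phi$ — but for \ref{A:2} one does need $|\mathrm{Im}\,\Phi(\lambda)|\lesssim|\lambda|^\alpha$, and establishing this from \eqref{phibound} alone requires a genuine argument: one uses that $\Re\Phi(\lambda)=\frac12\scal{\lambda,Q\lambda}+\int(1-\cos\scal{\lambda,y})\nu(dy)$, so the upper bound $\Re\Phi(\lambda)\le c_2|\lambda|^\alpha$ for large $\lambda$, combined with the standard estimate $\int_{|y|\le1/|\lambda|}\scal{\lambda,y}^2\nu(dy)+|\lambda|\int_{1/|\lambda|<|y|\le1}|y|\,\nu(dy)\lesssim \int(1-\cos\scal{\lambda,y})\nu(dy)\cdot(\text{const})$, forces the tail behaviour of $\nu$ that in turn bounds $|\mathrm{Im}\,\Phi(\lambda)|=|{-\scal{a,\lambda}}+\int(\scal{\lambda,y}\one_{|y|\le1}-\sin\scal{\lambda,y})\nu(dy)|$. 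In case (i), $\alpha\in[1,2)$, the linear term $\scal{a,\lambda}$ is $O(|\lambda|)=O(|\lambda|^\alpha)$ and is harmless; in case (ii), $\alpha\in(0,1)$, it is \emph{not} harmless (it dominates $|\lambda|^\alpha$), which is precisely why one passes to $\wt L$ where this term is absorbed. I would present this imaginary-part estimate as a short lemma and then assemble \ref{A:1} and \ref{A:2} from the kernel estimates and the Littlewood--Paley argument as above; the remaining details (uniform-in-$t\le1$ constants, the harmless low-frequency contributions) are routine.
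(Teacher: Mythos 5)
Your route is genuinely different from the paper's (which deduces \ref{A:1} from \cite[Theorem~1.3]{SSW12} and then proves \ref{A:2} in physical space from the integro-differential representation of $\L$ given in \cite{KS2019}), but as written it has a real gap in the proof of \ref{A:1}. Differentiating under the Fourier integral and bounding the integrand in absolute value only gives a bound that is \emph{uniform in} $x$, namely $\|\nabla\rho_t\|_{\C^0}\lesssim \int_{|\lambda|\le N}|\lambda|\,d\lambda+\int_{|\lambda|>N}|\lambda|e^{-c_1t|\lambda|^\alpha}\,d\lambda\lesssim t^{-(d+1)/\alpha}$; it yields no decay in $x$, hence neither the membership $\nabla\rho_t\in L_1(\R^d)$ nor the order $t^{-1/\alpha}$ you claim for $\|\nabla\rho_t\|_{L_1}$, and Young's inequality then gives nothing like \eqref{eq:K1newA}. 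Extracting the $L_1$ gradient bound from control of $\Re\Phi$ alone is exactly the nontrivial content here: since \eqref{phibound} gives no information on $\nabla_\lambda\Phi$, one cannot integrate by parts in $\lambda$ to produce decay in $x$; the known arguments exploit the L\'evy structure (e.g.\ split off the compound-Poisson large-jump part, whose convolution is harmless, and run a weighted Plancherel argument on the small-jump part, whose symbol has controlled $\lambda$-derivatives because $\int_{|y|\le1}|y|^2\,\nu(dy)<\infty$). This is precisely what \cite[Theorem~1.3]{SSW12} provides, and the paper simply invokes it (plus the semigroup property to pass from $t\le t_0$ to $t\le 1$). So this step of your proposal needs to be replaced, not just tightened.

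Two further points are inaccurate but repairable. First, the claim $\int_{|y|\le1}|y|^{\alpha}\,\nu(dy)<\infty$ is false in general (it already fails for the standard $\alpha$-stable measure); the averaging argument based on the upper bound in \eqref{phibound} gives $\nu(|y|>1/R)\lesssim R^\alpha$ and hence $\int_{|y|\le1}|y|^{\alpha+\eps}\,\nu(dy)<\infty$ for every $\eps>0$, which is still enough both for $\int_{|y|\le1}|y|\,\nu(dy)<\infty$ when $\alpha<1$ and for $|\operatorname{Im}\Phi(\lambda)|\lesssim|\lambda|^{\alpha+\eps}$ at high frequency, the $\eps$-loss being absorbed by \eqref{eq:K2newA}. (The paper obtains the same moment bound by a different route, from \eqref{eq:K1newA} via \cite[Lemma~A.2]{KS2019}.) Second, in your Littlewood--Paley step the pointwise bound $|\Phi(\lambda)|\lesssim|\lambda|^{\alpha+\eps}$ on an annulus does not by itself give $\|\L\Delta_jf\|_{\C^0}\lesssim 2^{j(\alpha+\eps)}\|\Delta_jf\|_{\C^0}$ on sup-norm based spaces; one needs Mikhlin-type bounds on $\lambda$-derivatives of the symbol, and a L\'evy symbol satisfying \eqref{phibound} is not a classical symbol of order $\alpha$ (its second derivatives are merely $O(1)$, not $O(|\lambda|^{\alpha-2})$). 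The clean repair is the paper's argument: note $Q=0$ (forced by the upper bound in \eqref{phibound}), justify the pointwise representation of $\L f$ for $f\in\C^{\alpha+\delta+\eps}$ (this itself needs a citation or proof, e.g.\ \cite[Theorem~3.2]{KS2019}, since such $f$ are not Schwartz), and then estimate the small-jump part by a first-order Taylor expansion against $\int_{|y|\le1}|y|^{\alpha+\eps}\,\nu(dy)$, the large-jump part by $\nu(|y|>1)\|f\|_{\C^\delta}$, and the drift term by $\|f\|_{\C^{1+\delta}}$; in case (ii) the drift and compensator cancel for $\wt L$, which is the point you correctly identified.
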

Recall that thanks to \cref{r:shift}
it is sufficient in \cref{T:main} to verify  Assumptions \ref{A:1} and \ref{A:2} for the shifted process $\wt L$. The proof of Proposition \ref{P:RePhibound} is provided in the Appendix. To verify \ref{A:3} the following result can be applied.
\begin{proposition}[{\cite[Theorem~3.1(c)]{Schmoments}}]\label{P:ReH3}	Assume that for some $\alpha\in(0,2)$, $C>0$  the symbol $\Phi$ satisfies 
\begin{equation}\label{phiboundH3}
	|\Phi(\lambda)|\le C|\lambda|^\alpha,\quad \lambda\in\R^d.
\end{equation}
Then for any $p\in(0,\alpha)$, there exists $N=N(\kappa,\alpha,C,d)$ such that
$$
\E|L_t|^p\le N t^{p/\alpha},\quad t\in(0,1].
$$
\end{proposition}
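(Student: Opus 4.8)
\medskip
\noindent\textbf{Proof proposal.}
The plan is to compute $\E|L_t|^p$ via the classical Fourier representation of fractional powers of the Euclidean norm, and then read off the rate $t^{p/\alpha}$ directly from the single hypothesis \eqref{phiboundH3}. Recall that for every $0<p<2$ there is a constant $c_{d,p}>0$ such that
$$
|x|^p=c_{d,p}\int_{\R^d}\frac{1-\cos\langle\lambda,x\rangle}{|\lambda|^{d+p}}\,d\lambda,\qquad x\in\R^d,
$$
the integral being convergent because $1-\cos\langle\lambda,x\rangle=O(|\lambda|^2)$ near $\lambda=0$; this requires $0<p<2$, which holds here since $p<\alpha\le2$. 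Plugging in $x=L_t$, taking expectations, using Tonelli's theorem (the integrand is nonnegative) and $\E e^{i\langle\lambda,L_t\rangle}=e^{-t\Phi(\lambda)}$, I obtain
$$
\E|L_t|^p=c_{d,p}\int_{\R^d}\frac{1-\Re\,e^{-t\Phi(\lambda)}}{|\lambda|^{d+p}}\,d\lambda.
$$

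The next step is to bound the numerator in two complementary ways. Since $\Phi$ is a L\'evy symbol, the L\'evy--Khintchine representation recalled above gives $\Re\Phi(\lambda)=\frac12\langle\lambda,Q\lambda\rangle+\int_{\R^d}(1-\cos\langle\lambda,y\rangle)\,\nu(dy)\ge0$, so $|e^{-t\Phi(\lambda)}|\le1$ and hence $1-\Re\,e^{-t\Phi(\lambda)}\le2$. On the other hand, using $|1-e^{-z}|\le|z|$ (valid whenever $\Re z\ge0$) together with \eqref{phiboundH3},
$$
1-\Re\,e^{-t\Phi(\lambda)}\le|1-e^{-t\Phi(\lambda)}|\le t|\Phi(\lambda)|\le Ct|\lambda|^\alpha.
$$
Then I would split the frequency integral at the natural scale $|\lambda|=t^{-1/\alpha}$: on $\{|\lambda|\le t^{-1/\alpha}\}$ use the bound $Ct|\lambda|^\alpha$, where the radial integral $\int_0^{t^{-1/\alpha}}r^{\alpha-p-1}\,dr$ converges since $p<\alpha$ and produces a multiple of $t\cdot t^{-(\alpha-p)/\alpha}=t^{p/\alpha}$; on $\{|\lambda|>t^{-1/\alpha}\}$ use the bound $2$, where $\int_{t^{-1/\alpha}}^\infty r^{-p-1}\,dr$ converges since $p>0$ and produces a multiple of $t^{p/\alpha}$ as well. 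Collecting the constants gives $\E|L_t|^p\le N t^{p/\alpha}$ with $N$ depending only on $d,\alpha,C$ (and $p$).

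I do not expect a genuine obstacle: this is a short and classical computation, which is presumably why the statement is merely quoted from \cite{Schmoments}. The only two points that deserve a line of care are the convergence of the Fourier identity for $|x|^p$ (which is exactly why $0<p<2$, equivalently $p<\alpha\le2$, is needed) and the interchange of $\int_{\R^d}\,d\lambda$ with $\E$, which is immediate from nonnegativity of the integrand. Finally, the cut-off $|\lambda|=t^{-1/\alpha}$ is precisely the scale at which the two elementary bounds $Ct|\lambda|^\alpha$ and $2$ on $1-\Re\,e^{-t\Phi(\lambda)}$ cross, and it is what forces the exponent $p/\alpha$.
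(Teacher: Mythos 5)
Your argument is correct. Note that the paper does not prove this proposition at all: it is quoted verbatim from \cite[Theorem~3.1(c)]{Schmoments}, so there is no in-paper proof to compare against. What you have written is a clean, self-contained derivation, and it is in fact the natural one: your first step reproduces exactly the explicit moment formula
\begin{equation*}
\E|L_t|^p=c_{d,p}\int_{\R^d}\big(1-\Re\, e^{-t\Phi(\lambda)}\big)|\lambda|^{-p-d}\,d\lambda
\end{equation*}
that the authors themselves quote from \cite[p.~3865]{Schmoments} immediately after the proposition, and your two elementary bounds ($1-\Re\, e^{-t\Phi(\lambda)}\le 2$ using $\Re\Phi\ge0$, and $1-\Re\, e^{-t\Phi(\lambda)}\le t|\Phi(\lambda)|\le Ct|\lambda|^\alpha$ using $|1-e^{-z}|\le|z|$ for $\Re z\ge0$) together with the cut at $|\lambda|=t^{-1/\alpha}$ give precisely the rate $t^{p/\alpha}$, with both radial integrals convergent thanks to $0<p<\alpha$. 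The interchange of expectation and integration is indeed justified by nonnegativity, and the subordination of the whole estimate to the single hypothesis \eqref{phiboundH3} matches the cited statement. One cosmetic remark: your constant depends on $(d,p,\alpha,C)$, which is the natural dependence; the ``$\kappa$'' in the paper's statement of the constant appears to be a typo and your bookkeeping is the correct one.
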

One can also derive an explicit formula for the moments of $L$ in terms of $\Phi$ \cite[p.~3865]{Schmoments}; this is also very useful in verifying \ref{A:3}. Namely, for any $p\in(0,2)$, there exists a constant $N=N(p,d)$ such that for any $t>0$
$$
\E|L_t|^p=N\int_{\R^d}(1-\Re e^{-t\Phi(\lambda)})|\lambda|^{-p-d}\,d\lambda.
$$

Now let us give an extensive list of examples of L\'evy processes satisfying \ref{A:1}--\ref{A:3}. This list is inspired by \cite[p.~425]{Priola1}, \cite[Example~6.2]{Priola2}, \cite[p. 1147]{SSW12} and \cite[Section~4]{Chen32}. All the corresponding proofs are placed in the Appendix.

\begin{example}[General non-degenerate  $\alpha$--stable process, $\alpha\in(0,2)$]\label{E:NDAS}
Take $Q=0$, $a=0$, and
$$
\nu(D) =\int_0^\infty \int_{\mathbb{S}} r^{-1-\alpha}\one_D(r\xi)\mu(d\xi)dr,\quad D\in\mathcal{B}(\R^d),
$$
where $\mu$ is  a finite non-negative measure concentrated on the unit sphere $\mathbb{S}:=\{y\in\R^d: |y|=1\}$ that is non-degenerate, i.e. its support is not contained in a proper linear subspace of $\R^d$.
Then there exists $\kappa\in\R^d$ such that the shifted process $\wt L_t:=L_t+\kappa t$ satisfies \ref{A:1}--\ref{A:3} (and, thus, \cref{T:main} holds by \cref{r:shift}). If $\alpha\in[1,2)$, then $\kappa=0$; if $\alpha\in(0,1)$,
then $\kappa=\int_{|y|\le1}y\nu(dy)<\infty$
\end{example}

\begin{example}[Standard isotropic $d$-dimensional $\alpha$--stable process, $\alpha\in(0,2)$]\label{E:SDAS}
This is a special case of \cref{E:NDAS} with $\mu$ being the uniform measure on $\mathbb{S}$. One can check that in this case  $\nu(D)=c_\alpha \int_D |y|^{-d-\alpha}\,dy$, $D\in\mathcal{B}(\R^d)$ and $\kappa=0$ for all values of $\alpha\in(0,2)$.
\end{example}

\begin{example}[Cylindrical $\alpha$--stable process, $\alpha\in(0,2)$]\label{E:DCAS}
Let $L$ be a $d$-dimensional process whose coordinates are independent standard $1$-dimensional $\alpha$-stable processes.
Then $L$ satisfies \ref{A:1}--\ref{A:3}. 
\end{example}

\begin{example}[{$\alpha$-stable-type process, $\alpha\in(0,2)$, see \cite[p. 1146]{SSW12}}]\label{E:alphasttype}
Take $Q=0$, $a=0$, and
\begin{equation}\label{nuformula}
	\nu(D) =\int_0^\infty \int_{\mathbb{S}} r^{-1-\alpha}\rho(r)\one_D(r\xi)\mu(d\xi)dr,\quad D\in\mathcal{B}(\R^d),
\end{equation}
where $\mu$ is a symmetric (that is $\mu(D)=\mu(-D)$ for any Borel set $D$) non-degenerate finite non-negative measure concentrated on  $\mathbb{S}$ and $\rho\colon(0,\infty)\to\R_+$ is a measurable function such that for some constants $C,C_1,C_2>0$ one has
\begin{equation}\label{rhoprop}
	\one_{[0,C]}(r)\le C_1 \rho(r)\le C_2.
\end{equation}	
Then $L$ satisfies \ref{A:1}--\ref{A:3}.
\end{example}

\begin{example}[$\alpha$-stable tempered process, $\alpha\in(0,2)$]\label{E:Tempered}
This is a special case of \cref{E:alphasttype} with $\rho(r)= e^{- c r}$ with some $c>0$.
\end{example}

\begin{example}[Truncated $\alpha$-stable  process, $\alpha\in(0,2)$]\label{E:Truncated}
This is a special case of \cref{E:alphasttype} with $\rho_r=c\one_{[0,1]}$ with some $c>0$ and $\mu$ being the uniform measure on $\mathbb{S}$. One can check that in this case $\nu(D)=c_{\alpha}\int_{D\cap \{y:|y|\le 1\}} |y|^{-d-\alpha}\,dy$, $D\in\mathcal{B}(\R^d)$
\end{example}

\begin{example}[{Relativistic  $\alpha$-stable process, $\alpha\in(0,2)$, \cite{Carmona, chakraborty2020relativistic}}]\label{E:relativistic}
Take $L$ with symbol $\Phi(\lambda)=(|\lambda|^2+C^{2/\alpha})^{\alpha/2}-C$, with some parameter $C>0$. It satisfies \ref{A:1}--\ref{A:3}.
\end{example}

\begin{example}[Brownian motion]\label{E:BM}
If $L$ is the standard $d$--dimensional Brownian motion, then it satisfies \ref{A:1}--\ref{A:3} with $\alpha=2$.
\end{example}

\begin{example}[Linear combinations]\label{E:sum}
\begin{enumerate}[(i)]  
	\item Let $L^{(1)}$, $L^{(2)}$ be two independent L\'evy processes. Assume that the process $L^{(i)}$, $i=1,2$, satisfies  \ref{A:1}--\ref{A:3} with $\alpha=\alpha_i$. Then the process $L^{(1)}+L^{(2)}$ satisfies  \ref{A:1}--\ref{A:3} with $\alpha=\alpha_1\vee\alpha_2$.
	\item In particular, if $L$ is a sum of a $d$--dimensional Brownian motion and the standard $\alpha$--stable process, then the rate in \eqref{eq:main-rate} is $n^{-(\frac12+\frac\beta2\wedge\frac12)+\eps}$ for $b\in\C^\beta$, $\beta>0$. 
	\item 
	In general, if $L$ has a non-degenerate diffusion part (that is, $Q$ is positive definite) and for some $\gamma>0$ we have $\int_{\{|y|>1\}}|y|^\gamma \nu (dy)<\infty$, then  the
	statement of \cref{T:main} is valid with   $\alpha=2$. This corresponds to the rate  $n^{-(\frac12+\frac\beta2\wedge\frac12)+\eps}$. This significantly improves \cite[Remark~2.3]{KSEM2019}.
\end{enumerate}
\end{example}

\begin{remark}
Let $d=1$ and take $Q=0$, $\nu(D)=c_\alpha\int_D y^{-1-\alpha}\one(y>0)\,dy$, $D\in\mathcal{B}(\R)$, $\alpha\in(0,1)$, ${a=\int_{|y|\le1} y\nu(dy)<\infty}$. Then the process $L$ satisfies \ref{A:1}--\ref{A:3} (since it is a special case of \eqref{E:NDAS}) and is increasing (\cite[Theorem~1.3.15]{Iphone}). Thus, regularization by noise can occur for monotone drivers as well.
\end{remark}

Finally, let us present a simple example of a class of L\'evy processes for which regularization by noise cannot occur. Suppose that $a=0$, $Q=0$, $\nu(\R^d)<\infty$. Then, the corresponding process $L$ is a pure jump process and it has only finitely many jumps on the interval $[0,1]$ \cite[Theorem~21.3]{Sato-san}. Denoting its first jumping time by $T(\omega)$, we see that on the time interval $[0,T(\omega)]$, equation \eqref{eq:main-SDE} becomes $dX_t=b(X_t) dt$. Clearly, if $b\in\C^\beta$, $\beta<1$, this equation might have infinitely many or no solutions.  Thus, for such L\'evy noises the original equation \eqref{eq:main-SDE} is not well-posed.

\section{Preliminaries}\label{S:Prel}

Before we proceed to the proofs of our main results, let us collect a number of useful methods and bounds  which we are going to apply later. 

For a random variable $\xi$, a sub-$\sigma$-algebra  $\mathcal{G}\subset\F$, and $p\ge1$  we introduce the quantity
\begin{equation}\label{condnorm}
\|\xi\|_{L_p(\Omega)|\mathcal{G}}:=\left(\E[|\xi|^p|\mathcal{G}]\right)^{\frac1p},
\end{equation}
which is a $\mathcal{G}$-measurable non-negative random variable. 
It is clear that
\begin{equation*} 
\|\xi\|_{L_p(\Omega)}=\|\|\xi\|_{L_p(\Omega)|\mathcal{ G}}\|_{L_p(\Omega)}.
\end{equation*}
Note that if  $p\ge1$, $\mathcal{G}\subset \mathcal{H}$ are $\sigma$-algebras, then the following simple bounds hold almost surely 
\begin{equation}\label{standardbound}
\|\E [\xi|\mathcal{H}]\|_{L_p(\Omega)}\le \|\xi\|_{L_p(\Omega)};\qquad 
\|\E [\xi|\mathcal{H}]\|_{L_p(\Omega)|\mathcal{G}}\le \|\xi\|_{L_p(\Omega)|\mathcal{G}}.
\end{equation}
These quantities are not norms, but rather $\cG$-measurable nonnegative random variables. To simplify the presentation, any inequality between such expressions is understood in the almost sure sense. 

\subsection{Conditional shifted stochastic sewing lemma}
An important tool to obtain \cref{T:main} is an adjusted version of \cite[Lemma 2.2]{Mate20}, which in turn is based on L\^e's stochastic sewing lemma \cite{Khoa}.
We need the following notation. For $0\le S\le T$ we denote a modified simplex 
\begin{equation}\label{ST21}
\Delta_{[S,T]}:=\{(s, t) : S \le  s < t \le T, s - (t - s) \ge S\}.
\end{equation}
For a function  $f\colon \Delta_{[S,T]}\to\R^d$ and a  triplet of times $(s,u,t)$ such that $S\le s\le u\le t\le T$, we denote
\begin{equation*}
\delta f_{s,u,t}:=f_{s,t}-f_{s,u}-f_{u,t}.
\end{equation*}
The conditional expectation given $\F_s$ is denoted by $\E^s$.

\begin{lemma}\label{lem:shiftedmodifiedSSL}
Let $0\leq S<T\leq1$, $p\in[2,\infty)$ and let $(A_{s,t})_{(s,t)\in\Delta_{[S,T]}}$ be a family of random variables in $L_p(\Omega,\R^d)$ such that $A_{s,t}$ is $\F_t$-measurable. Let $\G\subset \F_S$ be a $\sigma$-algebra. 	Suppose that for some $\eps_1,\eps_2,\eps_3>0$ and $\G$-measurable random variables $\Gamma_1, \Gamma_2, \Gamma_3\ge0$ the bounds
\begin{align}
	&\|A_{s,t}\|_{L_p(\Omega)|\G}  \leq \Gamma_1|t-s|^{1/2+\eps_1}\,,\label{SSL1}
	\\
	&\|\E^{s-(t-s)}\delta A_{s,u,t}\|_{L_p(\Omega)|\G}  \leq \Gamma_2 |t-s|^{1+\eps_2}+\Gamma_3 |t-s|^{1+\eps_3}\label{SSL2}
\end{align}
hold for all $(s,t)\in\Delta_{[S,T]}$ and $u=(s+t)/2$.

Further, suppose that there exists a process $\A=\{\A_t:t\in[S,T]\}$ such that for any $S\le s\le t\le T$  one has
\begin{equation}\label{limpart}
	\A_t-\A_s=\lim_{m\to\infty} \sum_{i=1}^{m-1} A_{s+i\frac{t-s}m,s+(i+1)\frac{t-s}m}\,\, \text{in probability.}
\end{equation}
Then there exist deterministic constants $K_1,K_2>0$, which depend only on $\eps_1,\eps_2$, $\eps_3$, $p$, and $d$ such that for any $S\le s\le t\le T$ we have
\begin{equation}\label{SSL3 cA}
	\|\A_t-\A_s\|_{L_p(\Omega)|\G}  \leq  K_1\Gamma_1 |t-s|^{1/2+\eps_1}+K_2\Gamma_2 |t-s|^{1+\eps_2}+K_3\Gamma_3 |t-s|^{1+\eps_3}.
\end{equation}
\end{lemma}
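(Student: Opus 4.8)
The plan is to follow the standard architecture of the stochastic sewing lemma of L\^e~\cite{Khoa}, but carefully tracking the ``shift'' $s-(t-s)$ appearing in the measurability requirement of \eqref{SSL2}, exactly as in \cite[Lemma~2.2]{Mate20}. First I would set up dyadic partitions: for fixed $S\le s\le t\le T$ and $m\in\N$, let $t^m_i=s+i(t-s)2^{-m}$ and define the discrete sums $\A^m_{s,t}:=\sum_{i=0}^{2^m-1}A_{t^m_i,t^m_{i+1}}$. The telescoping identity gives
\begin{equation*}
\A^{m+1}_{s,t}-\A^m_{s,t}=-\sum_{i=0}^{2^m-1}\delta A_{t^m_i,t^m_{i+1/2},t^m_{i+1}},
\end{equation*}
where $t^m_{i+1/2}$ is the midpoint of $[t^m_i,t^m_{i+1}]$, so that each increment $\A^{m+1}_{s,t}-\A^m_{s,t}$ is a sum of terms to which the two hypotheses \eqref{SSL1}--\eqref{SSL2} apply.

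The key step is the Burkholder--Davis--Gundy / Rosenthal-type estimate for the $L_p$ norm of such sums. Write $\delta A_{t^m_i,t^m_{i+1/2},t^m_{i+1}}=\E^{t^m_i-(t^m_{i+1}-t^m_i)}\delta A_{\cdots}+\big(\id-\E^{t^m_i-(t^m_{i+1}-t^m_i)}\big)\delta A_{\cdots}$. The conditioned part is controlled directly by \eqref{SSL2}, summing $2^m$ terms each of size $\lesssim C_2(|t-s|2^{-m})^{1+\eps_2}+C_3(|t-s|2^{-m})^{1+\eps_3}$, producing a geometric series in $m$ with ratio $2^{-\eps_2}$ resp. $2^{-\eps_3}$. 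For the centered part, here is where the shift matters: because $A_{t^m_i,t^m_{i+1}}$ is $\F_{t^m_{i+1}}$-measurable and the conditioning is at time $t^m_i-(t^m_{i+1}-t^m_i)=2t^m_i-t^m_{i+1}$, the terms with indices of the same parity (say even $i$) form a sequence of martingale differences with respect to the filtration $(\F_{t^m_{2j+1}})_j$ — the conditioning time of the $(2j+2)$-th term equals $t^m_{2j+1}$, which dominates the measurability time of all earlier terms. Hence one splits the sum into even and odd $i$, applies the $L_p$ martingale inequality (BDG plus Minkowski, or directly the discrete-time Burkholder inequality $\|\sum\xi_j\|_{L_p}^2\lesssim_p\sum\|\xi_j\|_{L_p}^2$ for martingale differences) to each, bounding each increment via \eqref{SSL1} by $C_1(|t-s|2^{-m})^{1/2+\eps_1}$. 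This yields $\|(\id-\E^{\cdots})\text{-part}\|_{L_p}\lesssim_p (2^m)^{1/2}C_1(|t-s|2^{-m})^{1/2+\eps_1}=C_1|t-s|^{1/2+\eps_1}2^{-m\eps_1}$, again geometric in $m$.

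Summing over $m\ge0$ shows $(\A^m_{s,t})_m$ is Cauchy in $L_p(\Omega)$ with
\begin{equation*}
\|\A^\infty_{s,t}-A_{s,t}\|_{L_p(\Omega)}\lesssim C_1|t-s|^{1/2+\eps_1}+C_2|t-s|^{1+\eps_2}+C_3|t-s|^{1+\eps_3},
\end{equation*}
where $\A^\infty_{s,t}=\lim_m\A^m_{s,t}$, and since $\|A_{s,t}\|_{L_p}\lesssim C_1|t-s|^{1/2+\eps_1}$ by \eqref{SSL1} the same bound holds for $\|\A^\infty_{s,t}\|_{L_p}$. The final step is identification: the hypothesis \eqref{limpart} states precisely that $\A_t-\A_s$ is the limit in probability of uniform partition sums; a routine argument (the dyadic sums $\A^m_{s,t}$ converge in $L_p$ hence in probability, and one checks that general uniform partition sums have the same limit by a further subdivision/Cauchy argument, or one simply notes uniqueness of the sewing) shows $\A_t-\A_s=\A^\infty_{s,t}$ a.s., giving \eqref{SSL3 cA}. (Strictly, one should also verify additivity $\A^\infty_{s,u}+\A^\infty_{u,t}=\A^\infty_{s,t}$ to make the identification clean, but this follows from the construction together with \eqref{limpart}.)

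I expect the main obstacle to be the bookkeeping in the martingale-difference splitting: one must check that the shifted conditioning time $2t^m_i-t^m_{i+1}$ really does stay $\ge S$ (this is why $\Delta_{[S,T]}$ in \eqref{ST21} is defined with the constraint $s-(t-s)\ge S$, and one needs that dyadic midpoints preserve this — indeed $t^m_i-(t^m_{i+1}-t^m_i)\ge s-(t-s)\ge S$ since $t^m_i\ge s$ and the spacing only shrinks), and that the parity splitting genuinely yields martingale differences against a well-chosen sub-filtration. Everything else — the geometric summation, the BDG application, passing to the limit — is routine and parallels \cite{Khoa,Mate20}; the only difference from the un-shifted sewing lemma is this factor-of-two loss in the partition which costs nothing in the final exponents.
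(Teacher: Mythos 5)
Your overall architecture (dyadic telescoping, even/odd martingale-difference splitting with BDG, geometric summation in $m$, identification via the assumed convergence in probability) is exactly the paper's; the even/odd estimate you sketch is precisely the inequality \eqref{Matebound} that the paper quotes from \cite[inequality (2.10)]{Mate20}. However, there is a genuine gap in how you treat the first subinterval. You define the dyadic sums starting at $i=0$ and justify admissibility of all terms by claiming $t^m_i-(t^m_{i+1}-t^m_i)\ge s-(t-s)\ge S$. The second inequality is false in the generality required: the conclusion \eqref{SSL3 cA} is asserted for \emph{all} $S\le s\le t\le T$, not only for pairs in $\Delta_{[S,T]}$, and for $s$ close to $S$ (e.g.\ $s=S<t$) one has $s-(t-s)<S$. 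In that regime your very first summand $A_{t^m_0,t^m_1}=A_{s,\,s+(t-s)2^{-m}}$ need not even be defined, since the family $A$ is only given on $\Delta_{[S,T]}$ (the constraint $s-(t-s)\ge S$ in \eqref{ST21}), and the $i=0$ term of your telescoping, $\delta A_{t^m_0,t^m_{1/2},t^m_1}$, cannot be controlled by \eqref{SSL2}, which is only assumed on $\Delta_{[S,T]}$. This is not a removable corner case: the lemma is later applied with $s=S$, $t=T$ (see the proof of \cref{L:32}), a pair that never lies in $\Delta_{[S,T]}$.

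The paper's fix, which your proposal misses, is to build the approximating sums \emph{without} the first subinterval, $A^m_{s,t}:=\sum_{i=1}^{2^m-1}A_{t^m_i,t^m_{i+1}}$ — note that the hypothesis \eqref{limpart} is deliberately formulated with sums starting at $i=1$ for the same reason. Then every $\delta A$ in the telescoping has its shifted conditioning time at a grid point $t^m_{i-2}\ge s\ge S$, all terms lie in $\Delta_{[S,T]}$, and the price is a single extra boundary term $A_{t^{m+1}_1,t^{m+1}_2}$ in each telescoping step (see \eqref{threesplit}), which is harmless by \eqref{SSL1}. With this modification (and then simply bounding $\liminf_m\|A^m_{s,t}\|_{L_p(\Omega)}$ via Fatou along the dyadic subsequence of \eqref{limpart}, rather than your slightly heavier Cauchy-plus-identification step), your argument goes through and coincides with the paper's proof; as written, though, the step ``$s-(t-s)\ge S$'' is where it breaks.
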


The proof of \cref{lem:shiftedmodifiedSSL} is given in the appendix.

\subsection{Weighted John-Nirenberg inequality}

\begin{proposition}\label{prop:vmo_john_nirenberg}
Let $0\le S\le T$ and let $\A\colon\Omega\times[S,T]\to\R^d$, $\xi\colon\Omega\times[S,T]\to\R_+$ be stochastic processes  adapted to the filtration $(\F_t)_{t\in[S,T]}$.  Assume additionally that $\A$ is continuous and $\cA_t\in L_1(\Omega)$ for all $t\in[S,T]$.
Suppose that for any $S\le s\le t\le T$ one has
\begin{equation}\label{eq:vmo_gamma}
	\E^s  |\A_t-\A_s|     \le \xi_s \quad a.s.
\end{equation}
Then for any $p\ge1$ there exists a constant $N=N(p)$ such that 
for any $S\le s\le t\le T$
\begin{equation}\label{jnres}
	\|\sup_{r\in[s,t]}|\A_r-\A_s|\,\|_{L_p(\Omega)|\F_s}\le N	\|\sup_{r\in[s,t]}\xi_r\,\|_{L_p(\Omega)|\F_s}.
\end{equation}
\end{proposition}

The above proposition is very close to \cite[Theorem 1.3]{Le22}, and the only difference is that the condition \eqref{eq:vmo_gamma} is imposed there for all stopping times $s(\omega), t(\omega)\in[S,T]$. On the other hand, we only assume that \eqref{eq:vmo_gamma} holds for deterministic $s, t\in[S,T]$; this  will be crucial later in the proofs of \cref{T:main,t:quadr,t:altm}. For the case when $\xi$ is a constant,  \cite[Proposition 2.2]{Le22b} shows that if \eqref{eq:vmo_gamma} is satisfied for deterministc $s,t$, then it is also satisfied for $s,t$ being stropping times. In general case, this seems to be not true. Therefore, for the proof of \cref{prop:vmo_john_nirenberg}, we adapt the argument from \cite[Theorem 1.3]{Le22}. This is done in the appendix.

\subsection{Heat kernel and related bounds}
Recall that $\cP$ is the Markov transition semigroup associated to the process $L$.
Quite often we use the following simple observation. For any measurable bounded function $f\colon\R^d\to\R^d$, $0\leq s\leq t$, and any $\F_s$-measurable random vector $\xi$ one has
\begin{equation}\label{very-very-basic}
\E^s f(L_t+\xi)=\cP_{t-s} f(L_s+\xi).
\end{equation}
We now formulate some consequences of \ref{A:1}-\ref{A:3} in the form that they are actually used in the proofs.
\begin{proposition}\label{Prop:HKbounds}
Suppose that \ref{A:1} and \ref{A:2} hold. Then for every $\eps>0$, $\beta\ge0$, $\rho$ as below, $\mu\in(\eps,1+\eps]$, $\mu\ge(\beta-\rho)/\alpha$
there exists  a constant $N=N(\alpha,\beta,\mu,\eps,\rho)$ such that the following holds for any $f\in\C^\beta$:
\begin{align}
	&\|\cP_t f\|_{\cC^\rho}\leq N \|f\|_{\cC^\beta}t^{\frac{(\beta-\rho)\wedge0}\alpha}, \quad 0<t\le1,\,\,\rho\ge0;\label{eq:K2A}\\
	&\|\cP_t f -\cP_s f\|_{\cC^\rho}\le N \| f\|_{\C^\beta} s^{\frac{\beta-\rho}\alpha-\mu} (t-s)^{\mu-\eps}, \quad 0\le s<t\le1,\,\, \rho=0,1.\label{eq:K1A}
\end{align}
\end{proposition}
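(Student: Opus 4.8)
The plan is to derive both estimates from the two assumptions \ref{A:1} and \ref{A:2}, interpolating where necessary. For \eqref{eq:K2A}, I would first treat the case $\rho\le\beta$, where the claimed exponent of $t$ is $0$: this is just the contraction/boundedness of the semigroup on H\"older spaces, $\|\cP_t f\|_{\cC^\rho}\le\|\cP_t f\|_{\cC^\beta}\le N\|f\|_{\cC^\beta}$, which follows because $\cP_t$ is a convolution (Markov) operator and hence commutes with finite differences and does not increase the relevant seminorms. The substantive case is $\rho>\beta$, where we must gain $\rho-\beta$ derivatives at the cost of $t^{-(\rho-\beta)/\alpha}$. Here the idea is to iterate \ref{A:1}: applying \eqref{eq:K1newA} together with the semigroup property $\cP_t=\cP_{t/k}\circ\cdots\circ\cP_{t/k}$ ($k$ factors) gives $\|\nabla^k\cP_t f\|_{\cC^0}\le N t^{-k/\alpha}\|f\|_{\cC^0}$ for integer $k$; combining this with the $\rho\le\beta$ case and standard H\"older interpolation (splitting $\rho$ and $\beta$ into integer-plus-fractional parts, and using that $\C^\gamma$ spaces interpolate) yields the fractional-order bound $\|\cP_t f\|_{\cC^\rho}\le N t^{-(\rho-\beta)/\alpha}\|f\|_{\cC^\beta}$. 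One checks that $t^{((\beta-\rho)\wedge0)/\alpha}$ exactly combines the two regimes.

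For the time-increment bound \eqref{eq:K1A}, the plan is to write $\cP_t f-\cP_s f=\cP_s(\cP_{t-s}f-f)=\int_0^{t-s}\cP_s\L\cP_r f\,dr$, using that $\frac{d}{dr}\cP_r f=\L\cP_r f$ on a suitable core and that \ref{A:2} guarantees $\L\cP_r f$ lands in the right space. For $\rho\in\{0,1\}$ one estimates, for $r\in(0,t-s)$,
\begin{equation*}
\|\cP_s\L\cP_r f\|_{\cC^\rho}\le \|\L\cP_{r/2}(\cP_s\cP_{r/2}f)\|_{\cC^\rho}\le N\|\cP_{r/2}(\cP_s\cP_{r/2}f)\|_{\cC^{\alpha+\rho+\eps'}}\le N\,(r/2)^{-(\alpha+\rho+\eps'-\rho')/\alpha}\cdots,
\end{equation*}
the point being to combine \ref{A:2} (to pay $\alpha+\rho+\eps$ derivatives) with \eqref{eq:K2A} just proved (to absorb those derivatives against regularity coming partly from $s$, partly from $r$), landing at a bound of the form $\|\cP_s\L\cP_r f\|_{\cC^\rho}\le N\|f\|_{\cC^\beta}\,s^{(\beta-\rho)/\alpha-\mu}\,r^{\mu-1-\eps}$; integrating $r^{\mu-1-\eps}$ over $(0,t-s)$ produces the factor $(t-s)^{\mu-\eps}$, and keeping the $s$-dependence separate gives $s^{(\beta-\rho)/\alpha-\mu}$. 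The hypotheses $\mu\in(\eps,1+\eps]$ and $\mu\ge(\beta-\rho)/\alpha$ are precisely what make the $r$-integral converge at $0$ and keep the exponent of $s$ from being wrongly signed (so that bounded times can be absorbed into $N$).

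The main obstacle I anticipate is not any single estimate but the careful bookkeeping of the various exponents and the splitting of ``total regularity'' between the two time parameters $s$ and $r$ in \eqref{eq:K1A}: one has a budget of $\alpha+\rho+\eps$ derivatives to produce from $\L$ acting via \ref{A:2}, against an available regularity of $\beta$ plus whatever smoothing $\cP_s$ and $\cP_r$ contribute, and one must distribute the smoothing so that the $r$-exponent is integrable near $0$ while the $s$-exponent matches $(\beta-\rho)/\alpha-\mu$. A secondary technical point is justifying the identity $\cP_{t-s}f-f=\int_0^{t-s}\L\cP_rf\,dr$ and the differentiation under the integral at the level of H\"older spaces (rather than on the generator's natural domain); this is handled by a routine approximation/mollification argument using that \ref{A:2} already quantifies $\L$ on all of $\C^\delta$, so no abstract semigroup-domain subtleties actually arise.
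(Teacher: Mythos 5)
Your proposal is correct and follows essentially the same route as the paper: \eqref{eq:K2A} is obtained from the trivial case $\rho\le\beta$ together with iterated use of the gradient bound \ref{A:1} (via the semigroup property and commutation with derivatives) and interpolation of H\"older spaces (the paper packages this as an induction on integer orders plus a $K$-functional argument for $\beta\in(0,1)$), while \eqref{eq:K1A} is proved exactly as you describe, writing the increment as $\int\L\cP_\cdot f$, combining \ref{A:2} with \eqref{eq:K2A}, and splitting the resulting power of time between $s$ and $t-s$ using $\mu\in(\eps,1+\eps]$ and $\mu\ge(\beta-\rho)/\alpha$. The only cosmetic difference is that the paper justifies the identity $\cP_tf-\cP_sf=\int_s^t\L\cP_rf\,dr$ by It\^o's formula for L\'evy processes applied to $\cP_rf\in\C^2$, rather than by a mollification argument.
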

The proof of this proposition is mostly technical and is provided in the appendix. 

Assuming that $L$ satisfies \ref{A:3}, it is immediate to see that this implies that for any 
$m>0$, $p\ge1$, $\eps>0$ there exists a constant $N=N(p,m,\eps,\alpha)$ such that 
\begin{equation}\label{eq:basic moments}
\| |L_t|^m \wedge 1\|_{L_p(\Omega)}\le 
N t^{(\frac m\alpha\wedge\frac{1}{p})-\eps},\quad t\in[0,1].
\end{equation}

We further recall two elementary inequalities:
\begin{align}
&|f(x_1)-f(x_2)|\leq |x_1-x_2|\|f\|_{\C^1}\label{eq:C1}\\
&|f(x_1)-f(x_2)-f(x_3)+f(x_4)|\leq |x_1-x_2-x_3+x_4|\|f\|_{\C^1}+|x_1-x_2||x_1-x_3|\|f\|_{\C^2}\label{eq:C2}
\end{align}
for any $x_1,x_2,x_3,x_4\in\R^d$ and any $f$ from $\C^1$ or $\C^2$, respectively.

\section{Proof of the main results}\label{S:proofs}
We will denote 
\begin{equation}\label{phiphin}
\varphi:=X-L,\quad  \varphi^n=X^n-L,\qquad n\in\Z_+.
\end{equation}
We consider the following decomposition of the difference between the additive functional of the process and its estimate.  For $0\le s\le t \le 1$, $n\in\Z_+$, $f\in\C^\beta$ we write
\begin{align}
\int_s^t f(X_r)\,dr-\int_s^t f(X^n_{\kappa_n(r)})\,dr
&=\int_s^t f(L_r+\varphi_r)-f(L_r+\varphi^n_r)\,dr
\nn\\
&\phantom{=}+\int_s^t f(L_r+\varphi^n_r)-f(L_r+\varphi^n_{\kappa_n(r)})\,dr
\nn\\
&\phantom{=}+\int_s^t f(L_r+\varphi^n_{\kappa_n(r)})-f(L_{\kappa_n(r)}+\varphi^n_{\kappa_n(r)})\,dr
\nn\\
&=:\cE^{f,n,1}_{s,t}+\cE^{f,n,2}_{s,t}+\cE^{f,n,3}_{s,t}.\label{eq:error-decomposition}
\end{align}
Clearly, for $f=b$ we get the increment of the difference between the process and its Euler approximaiton:
\begin{equation}\label{errordec2}
(X_t-X^n_t)-(X_s-X^n_s)=\int_s^t b(X_r)\,dr-\int_s^t b(X^n_{\kappa_n(r)})\,dr=
\cE^{b,n,1}_{s,t}+\cE^{b,n,2}_{s,t}+\cE^{b,n,3}_{s,t}.
\end{equation}

\begin{remark}
This decomposition differs from the one in e.g. \cite{ButDarGEr}: therein, $\cE^{b,n,2}+\cE^{b,n,3}$ can be treated as one term, and by Girsanov's theorem, the perturbation $\varphi^n$ can in fact be transformed away. Such trick is not available in the L\'evy case due to the lack of an appropriate Girsanov's theorem.
\end{remark}

Our  goal is to bound the $L_p(\Omega)$ norm of the left-hand side of \eqref{errordec2}.
However, doing this directly for the term $\cE^{b,n,3}_{s,t}$ would lead to a rate that deteriorates for large $p$, see \cref{r:restr} below.
Therefore, we bound a conditional $L^2$-``norms'' instead and eventually after buckling apply John-Nirenberg inequality.
First, in \cref{sec:apriori} we prove certain a priori bounds for $\varphi,\varphi^n$. Then in \cref{sec:E12} we produce general bounds for conditional ``norms'' of integrals of irregular functions along the perturbed L\'evy process, which are then applied to bound $\cE_{s,t}^{f,n,i}$ in \cref{sec:E3}.
Finally, these bounds are combined for the proofs of the main theorems in \cref{sec:mainproof}.

\subsection{A priori bounds}\label{sec:apriori}

Let $f\colon[0,1]\times\Omega\to\R^d$ be a measurable bounded function adapted to the filtration $\bF$. Let $\gamma\in(0,1]$, $p\ge2$, $0\le S\le T\le1$, let $\G\subset\F_S$ be a $\sigma$-algebra. We consider the following quantities of $f$: \begin{align}
&\db{f}_{\scC^0_p|\G,[S,T]}:=\sup_{t\in[S,T]}\|f(t)\|_{L_p(\Omega)|\G};\nn\\
&[f]_{\scC^\gamma_p|\G,[S,T]}:=\sup_{s,t\in[S,T]}\frac{\|f(t)-f(s)\|_{L_p(\Omega)|\G}}{| t-s|^\gamma};\label{newnormH}\\
&\ddnew{f}{\gamma}{p}{[S,T]}:=\sup_{s,t\in[S,T]}\frac{\big\|\|f(t)-\E^s f(t)\|_{L_1(\Omega)|\cF_s}\big\|_{L_p(\Omega)|\G}}{| t-s|^\gamma}.\label{newnorm}
\end{align}

Part (iii) of the following lemma is the main a priori estimate on the ``stochastic regularity'' of $\varphi$, $\varphi^n$.
\begin{lemma}\label{lem:useful-lemma}
\begin{enumerate}[{\rm(i)}]
	\item Let $q\ge1$. Let $\cG\subset \F$ be a $\sigma$-algebra. Let $Y,Z\in L_q(\Omega)$ be random variables and suppose that $Z$ in $\cG$--measurable. Then  
	\begin{equation}\label{YZcond}
		\|Y-\E [Y|\cG]\|_{L_q(\Omega)|\cG}	\le 2\|Y-Z\|_{L_q(\Omega)|\cG},\quad\text{a.s.}
	\end{equation}
	\item For any $0\le s\le t\le 1$, $q\ge1$, measurable function $f\colon[s,t]\times\Omega\to\R^d$ adapted to the filtration $\bF$, $\gamma\in(0,1)$, $\sigma$-algebra $\G\subset\F_s$ one has
	\begin{equation}\label{normbound}
		\ddnew{f}{\gamma}{q}{[s,t]}\le2[f]_{\scC^{\gamma}_q|\G,[s,t]}.
	\end{equation}
	\item Let $q\ge1$,  $\eps>0$. Assume that $\beta>1-\alpha$ and that \ref{A:3} holds. There exists a constant $N=N(\beta,\|b\|_{\cC^\beta},\alpha,\eps, q,M)$ such that for $0\leq s\leq t\leq 1$ one has a.s.
	\begin{align}\label{eq:useful-bound}
		&\|\varphi_t-\E^s\varphi_t\|_{L_q(\Omega)|\F_s}\leq N  |t-s|^{1+\big(\frac{\beta\wedge1}\alpha\wedge\frac{1}{q}\big)-\eps};\\
		\label{eq:useful-bound-n}
		&\|\varphi_t^n-\E^s\varphi_t^n\|_{L_q(\Omega)|\F_s}\leq N  |t-s|^{1+\big(
			\frac{\beta\wedge1}\alpha\wedge\frac{1}{ q}\big)-\eps}.
	\end{align}
\end{enumerate}
\end{lemma}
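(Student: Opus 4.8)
The plan is to prove parts (i) and (ii) by elementary manipulations and then to establish part (iii) via an application of the shifted stochastic sewing lemma, \cref{lem:shiftedmodifiedSSL}, in its conditional-norm form.

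For part (i), I would write $\|Y-\E[Y|\cG]\|_{L_q(\Omega)|\cG} = \|(Y-Z)-\E[Y-Z|\cG]\|_{L_q(\Omega)|\cG}$, using $\cG$-measurability of $Z$, and then apply the triangle inequality for $\|\cdot\|_{L_q(\Omega)|\cG}$ together with the conditional Jensen bound $\|\E[Y-Z|\cG]\|_{L_q(\Omega)|\cG}\le\|Y-Z\|_{L_q(\Omega)|\cG}$ from \eqref{standardbound}. For part (ii), I would take $Z = \E^s f(t)$ (which is $\F_s$-measurable) and $Y = f(t)$ in \eqref{YZcond} with $\cG = \F_s$: this gives $\|f(t)-\E^s f(t)\|_{L_q(\Omega)|\F_s}\le 2\|f(t)-f(s)\|_{L_q(\Omega)|\F_s}$ since $f(s)$ is also $\F_s$-measurable; taking $L_q$ norms over $\Omega$ and dividing by $|t-s|^\gamma$ and taking the sup yields \eqref{normbound}. (Note: one actually wants the conditional version used pointwise, so I would be slightly careful to state the estimate with conditional norms and then observe the seminorm bound follows; the cleanest route is to apply (i) directly and then bound $\|f(t)-f(s)\|_{L_q(\Omega)|\F_s}$ but the statement \eqref{normbound} is in terms of the unconditional seminorm $[f]_{\scC^\gamma_q}$, so after applying (i) I would integrate the $q$-th power and use that conditioning does not increase the $L_q(\Omega)$ norm.)

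For part (iii), which is the main content, I would set up stochastic sewing for $\varphi_t = X_t - L_t = x_0 + \int_0^t b(X_r)\,dr$. The natural germ is $A_{s,t} := \E^{s-(t-s)}\big[\int_s^t b(X_r)\,dr\big]$, or perhaps more simply $A_{s,t} := \E^s\big[\int_s^t \cP_{r-s} \text{-type expression}\big]$; the point is to exploit the regularization \eqref{eq:K2A}--\eqref{eq:K1A} from \ref{A:1}--\ref{A:2}. Specifically, writing $X_r = L_r - L_s + (L_s + \varphi_s) + (\varphi_r-\varphi_s)$ and conditioning, $\E^s b(X_r) = \cP_{r-s}b(L_s+\varphi_s) + (\text{error involving } \varphi_r-\varphi_s)$, so the dominant term is $\int_s^t \cP_{r-s} b(L_s+\varphi_s)\,dr$, which is $\F_s$-measurable up to a controllable remainder. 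The quantity we want to bound, $\varphi_t - \E^s\varphi_t = \int_s^t (b(X_r) - \E^s b(X_r))\,dr$, should be handled by checking the two hypotheses of \cref{lem:shiftedmodifiedSSL}: the one-step bound \eqref{SSL1} comes from $\|A_{s,t}\|\lesssim \int_s^t \|b\|_{\C^0}\,dr = \|b\|_{\C^0}|t-s|$, which is better than $|t-s|^{1/2+\eps_1}$; the consistency bound \eqref{SSL2} on $\E^{s-(t-s)}\delta A_{s,u,t}$ is where the gradient bound \ref{A:1} enters, producing the extra power $|t-s|^{1+1/\alpha-\eps}$ (roughly: the difference of semigroup terms at the midpoint $u$ gains a factor $(u-s)^{-1/\alpha}$ against a factor $|L_u - L_s|$ or $|\varphi_u-\varphi_s|$, and \ref{A:3} controls the moments of $|L_u-L_s|$ giving the $\nalpha/(\alpha q)$ truncation). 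A subtlety is that all estimates must be done in the \emph{conditional} norm $\|\cdot\|_{L_q(\Omega)|\F_s}$, so I would use a conditional version of the sewing lemma — this is standard (condition everything on $\F_S$ with $S=s$ the base point) and the excerpt's setup with $\E^t$ and the conditional norms \eqref{condnorm} is clearly designed for exactly this.

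The main obstacle will be the consistency estimate \eqref{SSL2}: bounding $\E^{s-(t-s)}\delta A_{s,u,t}$ requires carefully expanding $b(X_r)$ around the frozen point, separating the $L$-increment (handled by \ref{A:1} and \ref{A:3}, giving the $\beta\wedge 1$ and $\nalpha/(\alpha q)$ contributions) from the $\varphi$-increment (which is a priori only Lipschitz-in-time with an unknown modulus, so one needs a bootstrap/fixed-point argument: assume $\varphi$ has stochastic regularity exponent $\theta$, plug into the estimate, and recover exponent $1 + \theta' $ with $\theta'$ at least as good, closing at $\theta = 1 + (\beta\wedge 1)/\alpha \wedge \nalpha/(\alpha q) - \eps$). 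For $\varphi^n$ one runs the identical argument with $X^n$ in place of $X$ and $b(X^n_{\kappa_n(r)})$ in place of $b(X_r)$; since $\kappa_n(r)\le r$ the conditioning structure is unchanged and the frozen point $L_{\kappa_n(r)}$ only helps, so the same bound \eqref{eq:useful-bound-n} follows verbatim, with constants uniform in $n$. I would also need the condition $\beta > 1-\alpha$ precisely to ensure the recovered exponent exceeds $1$ (so that \eqref{SSL2} has a genuine $|t-s|^{1+\eps_2}$ with $\eps_2>0$), and the existence of the limiting process in \eqref{limpart} is immediate here since $\varphi$ is an honest (absolutely continuous) process so $\A_t - \A_s = \varphi_t - \E^s\varphi_t$ telescopes trivially along dyadic partitions.
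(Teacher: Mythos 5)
Parts (i) and (ii) of your plan are fine, except for a small slip in (ii): choosing $Z=\E^s f(t)$ makes \eqref{YZcond} vacuous (both sides coincide); the correct choice, which you in effect use, is $Z=f(s)$. The real problem is part (iii). First, the lemma as stated assumes only \ref{A:3} (plus $\beta>1-\alpha$), whereas your argument routes through the semigroup bounds of \ref{A:1}--\ref{A:2}; even if completed it would prove a weaker statement. Second, and more seriously, the sewing route as sketched cannot produce the claimed exponent: the quantity $\varphi_t-\E^s\varphi_t=\int_s^t\big(b(X_r)-\E^s b(X_r)\big)\,dr$ is a \emph{remainder} $\A_t-\A_s-A_{s,t}$ for the germ $A_{s,t}=\E^s\int_s^t b(X_r)\,dr$, while \cref{lem:shiftedmodifiedSSL} only outputs the increment bound \eqref{SSL3 cA} on $\|\A_t-\A_s\|_{L_p}$; with $\A=\varphi$ this increment is trivially of order $|t-s|$ (bounded $b$), so no bound of order $|t-s|^{1+\cdots}$ can be extracted from it. You would need a remainder-type sewing estimate (not provided in the paper), moreover in a conditional form — the bound must hold almost surely in $\|\cdot\|_{L_q(\Omega)|\F_s}$, since it is later used pointwise in \eqref{a:g} — and for all $q\ge1$, whereas \cref{lem:shiftedmodifiedSSL} is unconditional and requires $p\ge2$. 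The mechanism you invoke is also off: the gain does not come from the gradient bound \ref{A:1} (semigroup factors $(u-s)^{-1/\alpha}$ are exactly what one must avoid when only low moments of $L$ exist), and $1+1/\alpha-\eps$ is not the exponent in \eqref{eq:useful-bound}.

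The paper's proof is elementary and is essentially the bootstrap you mention only in passing, with no sewing at all. One applies part (i) with $\cG=\F_s$ and the $\F_s$-measurable $Z=\varphi_s+\int_s^t b(L_s+\E^s\varphi_r)\,dr$, so that $\beta$-H\"older continuity of $b$ gives $\|\varphi_t-\E^s\varphi_t\|_{L_q(\Omega)|\F_s}\le N\int_s^t\big(\|\,|L_r-L_s|^{\beta}\wedge1\|_{L_q(\Omega)|\F_s}+\|\varphi_r-\E^s\varphi_r\|^{\beta}_{L_q(\Omega)|\F_s}\big)\,dr$; the first term is controlled via \ref{A:3} and \eqref{eq:basic moments} because $L_r-L_s$ is independent of $\F_s$ — this, not \ref{A:1}, is where $\frac{\beta\wedge1}{\alpha}\wedge\frac{\nalpha}{\alpha q}$ enters — and the second by an induction hypothesis, yielding the recursion $\theta\mapsto 1+\big(\frac{\beta}{\alpha}\wedge\frac{\nalpha}{\alpha q}\wedge\beta\theta\big)-\eps$ started at $\theta=0$. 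The hypothesis $\beta>1-\alpha$ is precisely what makes this recursion reach the target exponent in finitely many steps (not, as you suggest, what makes an exponent in \eqref{SSL2} exceed one). Finally, for $\varphi^n$ the argument is not quite ``verbatim'': $L_{\kappa_n(r)}-L_s$ is independent of $\F_s$ only when $\kappa_n(r)\ge s$, so one splits at $s'=\lceil ns\rceil n^{-1}$, observes that for $t<s'$ the left-hand side of \eqref{eq:useful-bound-n} vanishes because $\varphi^n_t$ is $\F_s$-measurable, and runs the same induction for $t\ge s'$.
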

\begin{proof}
(i) We have 
\begin{align*}
	\|Y- \E [Y|\cG]\|_{L_q(\Omega)|\cG}	&\le \|Y-Z\|_{L_q(\Omega)|\cG}
	+\|\E [Y|\cG]-Z\|_{L_q(\Omega)|\cG}\nn\\
	&= \|Y-Z\|_{L_q(\Omega)|\cG}+\|\E [Y-Z|\cG]\|_{L_q(\Omega)|\cG}\nn\\
	&\le 2\|Y-Z\|_{L_q(\Omega)|\cG},
\end{align*}
where the last inequality follows from \eqref{standardbound}. 

(ii) By part (i) of the lemma and Jensen's inequality we have for any $s\le s'\le t'\le t$
\begin{equation*}
	\big\|\|f(t')-\E^{s'} f(t')\|_{L_1(\Omega)|\cF_{s'}}\big\|_{L_q(\Omega)|\G}\le 2
	\big\|\|f(t')- f(s')\|_{L_1(\Omega)|\cF_{s'}}\big\|_{L_q(\Omega)|\G}\le 2
	\|f(t')- f(s')\|_{L_q(\Omega)|\G},
\end{equation*}
where we used that $\G\subset \F_s\subset \F_{s'}$. The desired result follows now from the definitions of the seminorms \eqref{newnormH} and \eqref{newnorm}.

(iii) Without loss of generality, we can assume that $\beta\le1$. 
Suppose that \eqref{eq:useful-bound} holds for some $m\geq 0$ in place of ${1+(
	\frac{\beta\wedge1}\alpha\wedge\frac1{q})-\eps}$. This is certainly true for $m=0$ thanks to the fact that $b$ is bounded; we proceed now by induction on $m$.
We apply \eqref{YZcond} with $\cG=\F_s$, $Y=\phi_t$, $Z=\phi_s+\int_s^t b(L_s+\E^s\varphi_r)\,dr$. We get
\begin{align*}
	\|\varphi_t-\E^s\varphi_t\|_{L_q(\Omega)|\F_s}&\leq 2\Big\|\varphi_t-\varphi_s-\int_s^t b(L_s+\E^s\varphi_r)\,dr\Big\|_{L_q(\Omega)|\F_s}
	\\
	&=2\Big\|\int_s^t\big( b(L_r+\varphi_r)-b(L_s+\E^s\varphi_r)\big)\,dr\Big\|_{L_q(\Omega)|\F_s}
	\\
	&\le N \Big\|\int_s^t \big(|L_r-L_s|^{\beta}+|\varphi_r-\E^s\varphi_r|^{\beta}\big)\wedge1\,dr\Big\|_{L_q(\Omega)|\F_s}\\
	&\le N\int_s^t \big(\| |L_r-L_s|^{\beta} \wedge1\|_{L_{ q}(\Omega)|\F_s}+\|\varphi_r-\E^s\varphi_r\|^{\beta}_{L_{ q}(\Omega)|\F_s}\big)\,dr.
\end{align*}
Using \eqref{eq:basic moments}, the independence of $L_r-L_s$ from $\F_s$, and the induction hypothesis, we get a.s.
\begin{equation*}
	\|\varphi_t-\E^s\varphi_t\|_{L_q(\Omega)|\F_s}\leq N|t-s|^{1+(\frac\beta\alpha\wedge\frac{1}{ q}\wedge \beta m)-\eps}.
\end{equation*}
It is elementary to see that if $\eps>0$ is small enough, then the recursion $m_0=0$, $m_{i+1}=1+(\frac\beta\alpha\wedge\frac{1}{ q}\wedge \beta m_i)-\eps$ reaches $1+(\frac\beta\alpha\wedge\frac{1}{q})-\eps$ in finitely many steps (recall that  $\alpha>1-\beta$ and thus $\alpha>(1-\beta)/(1-\eps)$ for small enough $\eps>0$). Recalling our initial assumption $\beta\le1$, we get  \eqref{eq:useful-bound}. 

Inequality \eqref{eq:useful-bound-n} is obtained by a similar argument, though one has to be a bit more careful because now $L_{\kappa_n(r)}-L_{\kappa_n(s)}$ is not independent of $\F_s$. For fixed $s\in[0,1]$, define $s'$ to be the smallest  grid point which is bigger or equal to $s$, that is, $s':=\lceil ns \rceil n^{-1}$. It is crucial to note that $\phi^n_{s'}$ is $\F_s$ measurable.

We proceed by induction as before and assume that \eqref{eq:useful-bound-n} holds for some $m\geq 0$.
If $s\le t <s'$, then $\phi_t^n$ is $\F_s$--measurable. Hence $\varphi_t^n=\E^s\varphi_t^n$ and the left--hand side of \eqref{eq:useful-bound-n} is zero. Therefore it remains to consider the case $t\ge s'$. In this case, using again \eqref{YZcond} with $\cG=\F_s$, $Y=\phi^n_t$, $Z=\phi^n_{s'}+\int_{s'}^t b(L_s+\E^s\phi_{\kappa_n(r)}^n)\,dr$, we deduce
\begin{align*}
	\|\varphi_t^n-\E^s\varphi_t^n\|_{L_q(\Omega)|\F_s}&\le 2\Big\|\varphi_t^n-\varphi^n_{s'}-\int_{s'}^t b(L_s+\E^s\phi_{\kappa_n(r)}^n)\,dr\Big\|_{L_q(\Omega)|\F_s}
	\\
	&=2\Big\|\int_{s'}^t \big(b(L_{\kappa_n(r)}+\varphi^n_{\kappa_n(r)})-b(L_{s}+\E^s\phi^n_{\kappa_n(r)})\big)\,dr\Big\|_{L_q(\Omega)|\F_s}
	\\
	&\le N\Big\|\int_{s'}^t \big(|L_{\kappa_n(r)}-L_{s}|^\beta+|\varphi^n_{\kappa_n(r)}-\E^s\varphi^n_{\kappa_n(r)}|^\beta\big)\wedge1\,dr\Big\|_{L_q(\Omega)|\F_s}.
\end{align*}
Note that for $r\geq s'$, $\kappa_n(r)\geq s'\geq s$, and therefore $L_{\kappa_n(r)}-L_s$ is independent of $\cF_s$. From here we obtain \eqref{eq:useful-bound-n} exactly as before.
\end{proof}
\begin{remark}
The reason for the non-standard portion of our main assumption \eqref{eq:main-exponent} (condition $\beta>2-2\alpha$) and the strange threshold $2/3$ in \cref{rem:explanation} is the appearance of  $1/q$ in \eqref{eq:useful-bound}-\eqref{eq:useful-bound-n}.
\end{remark}

\subsection{General bounds}\label{sec:E12}

Since  $\cE^{f,n,1}$ and $\cE^{f,n,2}$ in decomposition \eqref{eq:error-decomposition} have similar forms (difference of averages of $f$ along $L$ with two different perturbations), we begin with the following  general bound that can be applied to both. 

\begin{lemma}\label{L:32} Let $p\in[2,\infty)$.  Assume \ref{A:1}-\ref{A:3}. Let	 $\tau,\gamma,\eps_0\in(0,1]$, $\theta>0$, be constants satisfying 
\begin{equation}\label{taugamma}
	\theta\in(1-\frac\alpha2,2],\quad \frac{\theta-2}{\alpha}+\tau>0,\qquad
	\gamma+\frac{\theta-1}{\alpha}>0.
\end{equation}
Let $f\in\C^\theta$. Let $g,h\colon[0,1]\times\Omega\to\R^d$ be bounded, adapted, measurable functions and suppose that there exist $C_g,C_h>0$ such that  for any $0\le s \le t \le 1$ one has
a.s.
\begin{align}\label{a:g}
	&\E^{s}|g_t-\E^{s}g_t|\le C_g |t-s|^{\tau},\\
	&\|h_t-\E^{s}h_t\|_{L_1(\Omega)}\le C_h |t-s|^{\eps_0}\label{a:h}.
\end{align} 
Then there exists a constant $N=N(\alpha,\theta,p,d,\gamma,\tau)$ such that for any $0\le S \le T \le 1$ and any $\sigma$-algebra $\G\subset\F_S$ one has the bound
\begin{align}\label{L32bound}
	\Bigl\|\int_S^T f(L_r+g_r)-f(L_{r}+h_r)\,dr\Bigr\|_{L_p(\Omega)|\G}  &\le  N\|f\|_{\C^\theta}(T-S)^{1+\frac{(\theta-1)\wedge0}{\alpha}}\db{g-h}_{\scC^0_p|\G,[S,T]}\nn\\
	&\phantom{\le}+ N\|f\|_{\C^\theta} (T-S)^{\gamma+\frac{(\theta-1)\wedge0}{\alpha}+1}\ddnew{g-h}{\gamma}{p}{[S,T]}\nn\\
	&\phantom{\le}+ N\|f\|_{\C^\theta} C_g (T-S)^{1+\frac{\theta-2}{\alpha}+\tau}\db{g-h}_{\scC^0_p|\G,[S,T]}.
\end{align}
\end{lemma}
\begin{remark}\label{rem:spoiler}
It is pivotal that the seminorm appearing in the right--hand side of \eqref{L32bound} is $\ddnew{g-h}{\gamma}{p}{[S,T]}$ rather than a much less precise seminorm  $[g-h]_{\scC^\gamma_p|\G,[S,T]}$ (recall \cref{lem:useful-lemma}(ii) and the definitions of the seminorms in \eqref{newnorm} and \eqref{newnormH}). This will be crucial for bounding $\cE^{f,n,2}$, see \cref{rem:WhyImportant} below.
\end{remark}

\begin{proof}
Fix $0\le S\le T\le 1$. Put 
\begin{align*}
	&A_{s,t}:=\E^{s-(t-s)}\int_s^t f(L_r+\E^{s-(t-s)}g_r)-f(L_{r}+\E^{s-(t-s)}h_r)\,dr,\qquad(s,t)\in \Delta_{[S,T]};\\
	&\A_{t}:=\int_0^t f(L_r+g_r)-f(L_{r}+h_r)\,dr,\qquad t\in[S,T].
\end{align*}
Let us verify that the processes $A$, $\A$ satisfy all the conditions of the stochastic sewing lemma (\cref{lem:shiftedmodifiedSSL}).

Let $(s,t)\in\Delta_{[S,T]}$. Then recalling \eqref{very-very-basic} and \eqref{eq:C1}, we see that
\begin{align*}
	|A_{s,t}|&\le \int_s^t |\cP_{r-(s-(t-s))} f(L_{s-(t-s)}+\E^{s-(t-s)}g_r)-\cP_{r-(s-(t-s))} f(L_{s-(t-s)}+\E^{s-(t-s)}h_r)|\,dr\\
	& \le\int_s^t \|\cP_{r-(s-(t-s))} f\|_{\C^1}|\E^{s-(t-s)}(g_r-h_r)|\,dr.
\end{align*}
Thus, by \eqref{eq:K2A} (applied with $\rho=1$ and $\beta=\theta$) and \eqref{standardbound}, we have
\begin{align*}
	\|A_{s,t}\|_{L_p(\Omega)|\G}& \le\|f\|_{\C^\theta}\int_s^t (r-s)^{\frac{(\theta-1)\wedge0}{\alpha}}\|\E^{s-(t-s)}(g_r-h_r)\|_{L_p(\Omega)|\G}\,dr\\
	& \le N\|f\|_{\C^\theta} (t-s)^{1+\frac{(\theta-1)\wedge0}{\alpha}} \sup_{r\in[S,T]}\|\E^{s-(t-s)}(g_r-h_r)\|_{L_p(\Omega)|\G}\\
	& \le N\|f\|_{\C^\theta} (t-s)^{1+\frac{(\theta-1)\wedge0}{\alpha}} \sup_{r\in[S,T]}\|g_r-h_r\|_{L_p(\Omega)|\G}\\
	& = N\|f\|_{\C^\theta} (t-s)^{1+\frac{(\theta-1)\wedge0}{\alpha}}\db{g-h}_{\scC^0_p|\G,[S,T]}.
\end{align*}
Here in the penultimate line we used that $\G\subset\F_S\subset\F_{s-(t-s)}$.
Note that by the assumption $\theta>1-\alpha/2$, we have $1+\frac{\theta-1}{\alpha}>1/2$. Therefore, condition \eqref{SSL1} is satisfied with $\Gamma_1= N\|f\|_{\C^\theta}\db{g-h}_{\scC^0_p|\G,[S,T]}$.

Now let us verify condition \eqref{SSL2}. As required, we take $(s,t)\in\Delta_{[S,T]}$, and $u:=(t+s)/2$. It will be convenient to denote $s_1:=s-(t-s)$, $s_2:=s-(u-s)$, $s_3:=s$, $s_4:=u$, $s_5:=t$. One has $s_1\le s_2\le s_3\le s_4\le s_5$.
Then we deduce
\begin{align}\label{step1}
	&\E^{s-(t-s)}\delta A_{s,u,t}\nn\\
	&\quad=\E^{s_1}\delta A_{s_3,s_4,s_5}\nn\\
	&\quad=\E^{s_1}\int_{s_3}^{s_4}(f(L_r+\E^{s_1}g_r)-f(L_{r}+\E^{s_1}h_r)-f(L_r+\E^{s_2}g_r)-f(L_{r}+\E^{s_2}h_r))\,dr\nn\\
	&\qquad+\E^{s_1} \int_{s_4}^{s_5}(f(L_r+\E^{s_1}g_r)-f(L_{r}+\E^{s_1}h_r)-f(L_r+\E^{s_3}g_r)-f(L_{r}+\E^{s_3}h_r))\,dr\nn\\
	&\quad=:I_1+I_2.
\end{align}
Here in the term $I_2$ we used the identity $u-(t-s)=s=s_3$.
We begin with the analysis of $I_1$. Recalling \eqref{very-very-basic}, we obviously have
\begin{align*}
	&I_1=\E^{s_1}\E^{s_2} \int_{s_3}^{s_4}(f(L_r+\E^{s_1}g_r)-f(L_{r}+\E^{s_1}h_r)-f(L_r+\E^{s_2}g_r)-f(L_{r}+\E^{s_2}h_r))\,dr\nn\\
	&\quad=\E^{s_1}\int_{s_3}^{s_4}\big(\cP_{r-s_2}f(L_{s_2}+\E^{s_1}g_r)-\cP_{r-s_2}f(L_{s_2}+\E^{s_1}h_r)\\
	&\hskip13ex-\cP_{r-s_2}f(L_{s_2}+\E^{s_2}g_r)-
	\cP_{r-s_2}f(L_{s_2}+\E^{s_2}h_r)\big)\,dr.
\end{align*}
Applying \eqref{eq:C2} and  \eqref{eq:K2A} we see that
\begin{align}\label{step2}
	|I_1|&\le \|f\|_{\C^\theta}\int_{s_3}^{s_4}(r-s_2)^{\frac{(\theta-1)\wedge0}{\alpha}}\E^{s_1}|\E^{s_1}(g_r-h_r)-\E^{s_2}(g_r-h_r)|\,dr\nn\\
	&\quad+ \|f\|_{\C^\theta}\int_{s_3}^{s_4}(r-s_2)^{\frac{\theta-2}{\alpha}}|\E^{s_1}(g_r-h_r)|\,\E^{s_1}|\E^{s_2}g_r-\E^{s_1}g_r|\,dr.
\end{align}
Using conditional Jensen's inequality and the assumption \eqref{a:g}, we see that a.s.
\begin{equation}\label{step21}
	\E^{s_1}|\E^{s_2}g_r-\E^{s_1}g_r|=
	\E^{s_1}|\E^{s_2}(g_r-\E^{s_1}g_r)|\le 
	\E^{s_1}|g_r-\E^{s_1}g_r|\le C_g |r-s_1|^{\tau}.
\end{equation} 
Similarly,
\begin{align}\label{step22}
	\E^{s_1}|\E^{s_2}(g_r-h_r)-\E^{s_1}(g_r-h_r)|&=\E^{s_1}|\E^{s_2}\big((g_r-h_r)-\E^{s_1}(g_r-h_r)\big)|\nn\\
	&\le \E^{s_1}|(g_r-h_r)-\E^{s_1}(g_r-h_r)|.
\end{align}
Combining 
\eqref{step2}, \eqref{step21}, \eqref{step22}, and using the Minkowski inequality together with \eqref{standardbound} and the fact that $\G\subset\F_{s_1}$, we finally get
\begin{align}\label{I1fin}
	\|I_1\|_{L_p(\Omega)|\G}&\le N\|f\|_{\C^\theta} (s_4-s_1)^{\gamma+\frac{(\theta-1)\wedge0}{\alpha}+1}\ddnew{g-h}{\gamma}{p}{[S,T]}\nn\\
	&\phantom{\le}+N\|f\|_{\C^\theta}C_g(s_3-s_2)^{\frac{\theta-2}{\alpha}}(s_4-s_3)(s_4-s_1)^{\tau}
	\db{g-h}_{\scC^0_p|\G,[S,T]}\nn\\
	&\le N\|f\|_{\C^\theta} (t-s)^{\gamma+\frac{(\theta-1)\wedge0}{\alpha}+1}\ddnew{g-h}{\gamma}{p}{[S,T]}\nn\\
	&\phantom{\le}+N\|f\|_{\C^\theta}C_g(t-s)^{\frac{\theta-2}{\alpha}+1+\tau}\db{g-h}_{\scC^0_p|\G,[S,T]},
\end{align}
where the last inequality follows from the fact $s_3-s_2=s_4-s_3=(t-s)/2$ and $s_4-s_1=(u-s)+(t-s)=\frac32 (t-s)$. 
By exactly the same argument (we just need to take $s_3$ in place of $s_2$, $s_4$ in place of $s_3$ and $s_5$ in place of $s_4$), we get
the exact same bound for $I_2$, and then by \eqref{step1}, for $\E^{s-(t-s)}\delta A_{s,u,t}$ as well.
Since by the assumptions of the lemma $\gamma+\frac{(\theta-1)\wedge0}{\alpha}+1>1$ and $\frac{\theta-2}{\alpha}+1+\tau>1$, we see that condition \eqref{SSL2} is satisfied with $\Gamma_2=N\|f\|_{\C^\theta} \ddnew{g-h}{\gamma}{p}{[S,T]}$ and $\Gamma_3=N\|f\|_{\C^\theta}C_g\db{g-h}_{\scC^0_p|\G,[S,T]}$.

It remains to verify condition \eqref{limpart}. Let $s,t\in[S,T]$, $s<t$. Fix $m\in \N$. Denote 
$t_i:=s+i\frac{t-s}m$, $i=0,\ldots,m$. Note that $t_{i}-(t_{i+1}-t_i)=t_{i-1}$. 
Then we have
\begin{align}
	\bigl|\A_t-\A_s-\sum_{i=1}^{m-1} A_{t_i,t_{i+1}}\bigr|
	&\le \sum_{i=1}^{m-1}  \int_{t_i}^{t_{i+1}} \bigl|f(L_r+g_r) -\cP_{r-t_{i-1}} f(L_{t_{i-1}}+\E^{t_{i-1}} g_r)\,\bigr|dr\nn\\
	&\phantom{\le}+ \sum_{i=1}^{m-1}  \int_{t_i}^{t_{i+1}} \bigl|f(L_r+h_r) -\cP_{r-t_{i-1}} f(L_{t_{i-1}}+\E^{t_{i-1}} h_r)\,\bigr|dr\nn\\
	&\phantom{\le}+ \int_{t_0}^{t_1} \big(|f(L_r+g_r)|+|f(L_r+h_r)|\big)\,dr\nn\\
	&=:I_{m,1}+I_{m,2}+I_{m,3}.\label{integraldiff}
\end{align}
Using \eqref{eq:K1A} (with $\rho=0$, $\theta\wedge\alpha$ in place of $\beta$, and $\mu=\tfrac{\theta\wedge\alpha}{\alpha}$) we easily deduce that for any $\eps>0$
\begin{align*}
	&\bigl|f(L_r+g_r) -\cP_{r-t_{i-1}} f(L_{t_{i-1}}+\E^{t_{i-1}} g_r)\,\bigr|\\
	&\quad\le
	\|f\|_{\C^{\theta\wedge 1}} (|L_r-L_{t_{i-1}}|^{\theta\wedge 1}\wedge1)+\|f\|_{\C^{\theta\wedge 1}} |g_r -\E^{t_{i-1}} g_r|^{\theta\wedge 1} 
	+\|f\|_{\C^{\theta\wedge\alpha}} |r-{t_{i-1}}|^{\frac{\theta\wedge\alpha}{\alpha}-\eps}.
\end{align*}
This together with \eqref{eq:basic moments}, \eqref{a:g} and the Minkowski inequality yields 
\begin{equation}\label{I1fin+}
	\|I_{m,1}\|_{L_1(\Omega)}\le N (1+C_g) \|f\|_{\C^\theta} (t-s)\frac1{m^{\big(\frac{\theta\wedge 1}{\alpha}\wedge ((\theta\wedge1) \tau)\wedge1\big)-\eps}}.
\end{equation}
Similarly, with the help of \eqref{a:h} we bound 
\begin{equation}\label{I2fin}
	\|I_{m,2}\|_{L_1(\Omega)}\le N (1+C_h)\|f\|_{\C^\theta} (t-s)\frac1{m^{\big(\frac{\theta\wedge 1}{\alpha}\wedge ((\theta\wedge1) \eps_0)\wedge1\big)-\eps}}.
\end{equation}
Finally, it is obvious that $|I_{m,3}|\le N m^{-1}\|f\|_{\C^0}$.
Therefore, substituting this, \eqref{I1fin+} and \eqref{I2fin} into \eqref{integraldiff} and choosing $\eps>0$ sufficiently small, we see that the  sum 
$\sum_{i=1}^{m-1} A_{t_i,t_{i+1}}$ converges to $\A_t-\A_s$ in $L_1(\Omega)$ and hence in probability as $m\to\infty$. Hence, \eqref{limpart} holds.

Thus, all the conditions of \cref{lem:shiftedmodifiedSSL} are satisfied. The claimed bound \eqref{L32bound} follows now from~\eqref{SSL3 cA}.
\end{proof}

\begin{remark}
We now understand why it was essential to use the shifted stochastic sewing lemma rather than the usual stochastic sewing lemma. Indeed, the exponent in the second term in \eqref{step2}, $\frac{\theta - 2}{\alpha}$, can be less than $-1$. Had we applied the usual stochastic sewing lemma,  we would have been required to impose $\frac{\theta - 2}{\alpha} > -1$ to ensure integrability. Later, when we apply \cref{L:32} for $f=b$ and $\theta=\beta$, this would have led to the suboptimal condition $\beta > 2 - \alpha$ rather than $\beta > 1 - \frac{\alpha}{2}$. This issue is effectively resolved through the use of shifting.
\end{remark}

Next, we obtain a general quadrature estimate. It will be crucial for bounding the third error term   $\cE^{f,n,3}$ in decomposition \eqref{eq:error-decomposition} as well as for the proof of
\cref{t:quadr}.
An analogue of such a bound in the case of fractional Brownian motion in place of $L$ and $0$ in place of $g$ is obtained in \cite[Lemma~4.1]{ButDarGEr}, with rate that is consistent with \eqref{Lkappabound} below.

\begin{lemma}\label{l:approx}
Assume \ref{A:1}-\ref{A:3}. Let $g\colon[0,1]\times\Omega\to\R^d$ be  a bounded measurable function, let $f\in\C^\theta$, $\theta\in[0,1]$. Suppose that the following holds:

\begin{enumerate}[$($i$)$]
	\item there exist constants 
	$\tau>0$, $C_g>0$ such that 
	\begin{align}\label{a:tau}
		&\tau>\frac12 +\frac1\alpha+\frac\theta\alpha\wedge\frac12 -\frac{\theta}\alpha;\\
		&\E^{s}|\E^t g_r-\E^{s}g_r|\le C_g |r-s|^{\tau},\quad 0\le s \le r \le 1,\,\, t\in[s,1]\label{a:gnew};
	\end{align}
	\item for some $n\in\N$ and all $t\in[0,1]$
	\begin{equation}\label{a:measure}
		g_{\kappa_n(t)}\text{ is $\F_{(\kappa_n(t)-\frac1n)\vee0}$ measurable}.
	\end{equation}
	\item\label{condtheorem} $g\equiv0$ \textbf{or} $\theta>0$.
\end{enumerate}

Then for any $\eps\in(0,1/2)$ there exists a constant $N=N(\alpha,\theta,d,\tau,\eps,M)$ independent of $n$ such that for any $\{0\le S \le T \le 1\}$, and any $\sigma$-algebra $\G\subset\F_{\kappa_n(S)}$ the following  holds:
\begin{align}\label{Lkappabound}
	&\Bigl\|\int_S^T (f(L_r+g_{\kappa_n(r)})-f(L_{\kappa_n(r)}+g_{\kappa_n(r)}))\,dr\Bigr\|_{L_2(\Omega)|\G}\nn\\
	  &\qquad\le  N\|f\|_{\C^\theta}(1+ C_g) n^{-\bigl(\frac12 +\frac\theta\alpha\wedge\frac12 \bigr)+2\eps}|T-S|^{\half+\eps}.
\end{align}
\end{lemma}
\begin{proof}
To simplify notations, set $\psi_s:=g_{\kappa_n(s)}$. We fix $0\le S\le T\le 1$ and
apply \cref{lem:shiftedmodifiedSSL}  for the processes
\begin{align}\label{eq:A-def-third}
	A_{s,t}&:=\E^{s-(t-s)}\int_s^t (f(L_r+\E^{s-(t-s)}\psi_r)-f(L_{\kappa_n(r)}+\E^{s-(t-s)}\psi_r))\,dr,\,\, (s,t)\in\Delta_{[S,T]};\\
	\A_{t}&:=\int_0^t (f(L_r+\psi_r)-f(L_{\kappa_n(r)}+\psi_r))\,dr,\quad t\in[S,T]\nn.
\end{align}
First we verify \eqref{SSL1}. If $(s,t)\in \Delta_{[S,T]}$ and $s\le t\le s+2/n$, then we have from \eqref{eq:basic moments} (with $m=\theta$ and $p=2$)
\begin{align}\label{smallgap}
	\|A_{s,t}\|_{L_2(\Omega)|\G}&\le
	\int_s^t \|f(L_r+\E^{s-(t-s)}\psi_r)-f(L_{\kappa_n(r)}+\E^{s-(t-s)}\psi_r)\|_{L_2(\Omega)|\G}\,dr\nn\\
	&\le \|f\|_{\C^\theta}\int_s^t \||L_r-L_{\kappa_n(r)}|^{\theta}\wedge1\|_{L_2(\Omega)|\G}\,dr\nn\\
	&\le N\|f\|_{\C^\theta}|t-s| n^{-\big(\frac{\theta}{\alpha}\wedge\frac12\big)+\eps}\nn\\
	&\le N \|f\|_{\C^\theta}|t-s|^{\half+\eps}n^{-\big(\half+\frac{\theta}{\alpha}\wedge\frac12\big)+2\eps},
\end{align}
where we used in the penultimate inequality that $\G\subset\F_{\kappa_n(S)}$ and thus $L_r-L_{\kappa_n(r)}$ is independent of $\G$. The last inequality follows from the fact that $t-s\le 2/n$.

Now consider the case $t\geq s+2/n$. Then we note that $r\geq s$ implies $\kappa_n(r)\geq s-(t-s)$, and in fact $\kappa_n(r)-( s-(t-s))\geq (t-s)/2$. Recalling \eqref{very-very-basic}, we see that
\begin{equ}
	A_{s,t}=\int_s^t\big(\cP_{r-(s-(t-s))}-\cP_{\kappa_n(r)-(s-(t-s))}\big)f(L_{s-(t-s)}+\E^{s-(t-s)}\psi_r)\,dr.
\end{equ}
Applying \eqref{eq:K1A} with $\rho=0$, $\theta\wedge((1-\eps)\alpha)$ in place of $\beta$,  $\mu=(\frac\theta\alpha\wedge\frac12)+\frac12-\eps$, we get (note that all the assumptions of \cref{Prop:HKbounds} are satisfied with such choice of parameters)
\begin{align*}
	|A_{s,t}|&\le N \|f\|_{\C^\theta}\int_s^tn^{-\bigl(\frac12 +\frac\theta\alpha\wedge\frac12 \bigr)+2\eps}
	|t-s|^{(\frac{\theta}{\alpha}\wedge(1-\eps))-(\frac{\theta}{\alpha}\wedge\frac12)-\frac12+\eps}\,dr\\
	&\leq  N \|f\|_{\C^\theta} n^{-\bigl(\frac12 +\frac\theta\alpha\wedge\frac12 \bigr)+2\eps}|t-s|^{\half+\eps},
\end{align*}
which implies 
\begin{equation*}
	\|A_{s,t}\|_{L_2(\Omega)|\G}\le  N \|f\|_{\C^\theta} n^{-\bigl(\frac12 +\frac\theta\alpha\wedge\frac12 \bigr)+2\eps}|t-s|^{\half+\eps}.
\end{equation*}
Recalling \eqref{smallgap}, we see that the condition \eqref{SSL1} is satisfied with $\Gamma_1=N\|f\|_{\C^\theta}n^{-\bigl(\frac12 +\frac\theta\alpha\wedge\frac12 \bigr)+2\eps}$ and $\eps_1=\eps$.

Moving on to the condition \eqref{SSL2}, take $(s,t)\in\Delta_{[S,T]}$ and $u:=(t+s)/2$. As before, denote $s_1:=s-(t-s)$, $s_2:=s-(u-s)$, $s_3:=s$, $s_4:=u$, $s_5:=t$.
We need to bound $\E^{s-(t-s)}\delta A_{s,u,t}=\E^{s_1}\delta A_{s_3,s_4,s_5}$.
By a standard computation we see that
\begin{align}\label{ivanitwo}
	\E^{s_1}\delta A_{s_3,s_4,s_5}&=\E^{s_1}\E^{s_2}\int_{s_3}^{s_4} f(L_r+\E^{s_1}\psi_r)-f(L_{\kappa_n(r)}+\E^{s_1}\psi_r)
	\nn\\
	&\hspace{3cm}-f(L_r+\E^{s_2}\psi_r)+f(L_{\kappa_n(r)}+\E^{s_2}\psi_r)\,dr
	\nn\\
	&\quad+\E^{s_1}\E^{s_3}\int_{s_4}^{s_5} f(L_r+\E^{s_1}\psi_r)-f(L_{\kappa_n(r)}+\E^{s_1}\psi_r)
	\nn\\
	&\hspace{3cm}-f(L_r+\E^{s_3}\psi_r)+f(L_{\kappa_n(r)}+\E^{s_3}\psi_r)\,dr\nn\\
	&=:I_1+I_2.
\end{align}
The two terms are treated in exactly the same way, so we only discuss bounding the first one.
When $|t-s|\geq 4 /n$, then for $r\geq s_3$ we have 
\begin{equ}\label{eq:trivial}
	\kappa_n(r)-s_2\geq s_3-1/n-s_2\geq (t-s)/4>0.
\end{equ}
Therefore we first write
\begin{equs}
	I_1&=\E^{s_1}\int_{s_3}^{s_4}\big(\cP_{r-s_2}-\cP_{\kappa_n(r)-s_2}\big)f(L_{s_2}+\E^{s_1}\psi_r\big)
	\\
	&\hspace{2cm}-\big(\cP_{r-s_2}-\cP_{\kappa_n(r)-s_2}\big)f(L_{s_2}+\E^{s_2}\psi_r\big)\,dr.
\end{equs}
Applying \eqref{eq:K1A} with $\rho=1$, $\theta$ in place of $\beta$,  $\mu=\frac12 +\frac\theta\alpha\wedge\frac12$, and using \eqref{eq:trivial} yields (we see again  that all the assumptions of \cref{Prop:HKbounds} are satisfied with such choice of parameters)
\begin{align}\label{i1cor3b}
	|I_1|&\leq\int_{s_3}^{s_4}\|\big(\cP_{r-s_2}-\cP_{\kappa_n(r)-s_2}\big)f\|_{\cC^1}
	\E^{s_1}|\E^{s_1}\psi_r-\E^{s_2}\psi_r|\,dr\nn\\
	&\le N\|f\|_{\C^\theta}\int_{s_3}^{s_4}n^{-{\bigl(\frac12 +\frac\theta\alpha\wedge\frac12 \bigr)}+\eps}|t-s|^{-\big(\frac12 +\frac\theta\alpha\wedge\frac12+\frac1\alpha -\frac{\theta}\alpha\big)}	|r-s_1|^\tau\,dr\nn\\
	&\le N\|f\|_{\C^\theta} C_g n^{-{\bigl(\frac12 +\frac\theta\alpha\wedge\frac12 \bigr)}+\eps}|t-s|^{1+\widetilde\eps},
\end{align}
where we used \eqref{a:gnew} in the second inequality and put $\wt \eps:=\tau-\big(\frac12 +\frac\theta\alpha\wedge\frac12+\frac1\alpha -\frac{\theta}\alpha\big)>0$ by \eqref{a:tau}. By a similar argument, \eqref{i1cor3b} holds for $|I_2|$.  Therefore, taking $L_2(\Omega)|\G$ norm and recalling \eqref{ivanitwo} we can conclude that in the case $|t-s|\geq 4 /n$ we have 
\begin{equ}\label{eq:lastdelta}
	\|\E^{s-(t-s)}\delta A_{s,u,t}\|_{L_2(\Omega)|\G}\le N \|f\|_{\C^\theta}C_g n^{-\bigl(\frac12 +\frac\theta\alpha\wedge\frac12 \bigr)+\eps}|t-s|^{1+\widetilde\eps}.
\end{equ}

Next, consider the case $|t-s|\leq 4 /n$. By assumption, $\psi_r=g_{\kappa_n(r)}$ is  $\F_{(\kappa_n(r)-\frac1n)\vee0}$ measurable. Therefore, if $r\in[\kappa_n(s_1),\kappa_n(s_1)+\frac2n)$, then $\psi_r$ is $\F_{\kappa_n(s_1)}$--measurable. Since $\kappa_n(s_1)\le s_1\le s_2$, one has $\E^{s_1}\psi_r=\E^{s_2}\psi_r=\psi_r$ and thus the integrand in $I_1$ is zero. Hence, we can concentrate on the case $r\ge \kappa_n(s_1)+\frac2n$. In this case, $\kappa_n(r)-1/n\ge  s_1$. Thus we get 
\begin{align*}
	I_1&=\E^{s_1}\int_{[s_3,s_4]\,\cap\,[\kappa_n(s_1)+\frac2n,1]}\E^{\kappa_n(r)-\frac1n}\Big( f(L_r+\E^{s_1}\psi_r)-f(L_{\kappa_n(r)}+\E^{s_1}\psi_r)
	\nn\\
	&\hspace{3cm}-f(L_r+\E^{s_2}\psi_r)+f(L_{\kappa_n(r)}+\E^{s_2}\psi_r)\Big)\,dr\\
	&=\E^{s_1}\int_{[s_3,s_4]\,\cap\,[\kappa_n(s_1)+\frac2n,1]}\cP_{\frac1n}
	f(L_{\kappa_n(r)-\frac1n}+\E^{s_2}\psi_r)-\cP_{\frac1n}
	f(L_{\kappa_n(r)-\frac1n}+\E^{s_1}\psi_r)\\  &\hspace{2cm}+\cP_{r-\kappa_n(r)+\frac1n}f(L_{\kappa_n(r)-\frac1n}+\E^{s_1}\psi_r)-\cP_{r-\kappa_n(r)+\frac1n}f(L_{\kappa_n(r)-\frac1n}+\E^{s_2}\psi_r)\,dr,
\end{align*}
where we used again that $\psi_r$ is $\F_{\kappa_n(r)-\frac1n}$-measurable.
Applying \eqref{eq:K2A} with $\rho=1$ and recalling \eqref{a:gnew}, we immediately deduce
\begin{align*}
	|I_1|&\le N \|f\|_{\C^\theta}C_g
	\int_{[s_3,s_4]\,\cap\,[\kappa_n(s_1)+\frac2n,1]}
	(\|\cP_{\frac1n}f\|_{\cC^1}+\|\cP_{r-\kappa_n(r)+\frac1n}f\|_{\cC^1})\E^{s_1}|\E^{s_1}\psi_r-\E^{s_2}\psi_r|\,dr
	\\
	&\le N\|f\|_{\C^\theta}C_g|t-s|^{1+\tau}n^{-\frac{\theta-1}\alpha}
	\\
	&\le N\|f\|_{\C^\theta}C_g|t-s|^{1+\eps}n^{-\tau-\frac{\theta-1}\alpha+\eps}\\
	&\le N\|f\|_{\C^\theta}C_g|t-s|^{1+\eps}n^{-\bigl(\frac12 +\frac\theta\alpha\wedge\frac12 \bigr)+\eps},
\end{align*}	
where the penultimate inequality follows from the fact that $|t-s|\le 4/n$, and in the last inequality we used \eqref{a:tau}. By the same argument, exactly the same bound holds also for $|I_2|$. Recalling now \eqref{ivanitwo} and \eqref{eq:lastdelta}, we can therefore conclude that  \eqref{SSL2} is satisfied with
with $\Gamma_2=N\|f\|_{\C^\theta} C_g n^{-\bigl(\frac12 +\frac\theta\alpha\wedge\frac12 \bigr)+\eps}$ and $\eps_2=\eps\wedge\widetilde \eps$.

It remains to verify that the process $\A_t$ satisfies \eqref{limpart}. 	Fix now $m\in \N$. Denote $t_i:=s+i\frac{t-s}m$, $i=0,\ldots,m$. We get 
\begin{align}
	\Bigl\|\A_t-\A_s-\sum_{i=1}^{m-1} A_{t_i,t_{i+1}}\Bigr\|_{L_1(\Omega)}
	&\le \|\A_{t_1}-\A_{t_0}\|_{L_1(\Omega)}\nn\\
	&\phantom{\le}+	\Bigl\|\sum_{i=1}^{m-1}\A_{t_{i+1}}-\A_{t_{i}}-\E^{t_{i}}
	(\A_{t_{i+1}}-\A_{t_{i}})\Bigr\|_{L_2(\Omega)}\nn\\
	&\phantom{\le}+\Bigl\|\sum_{i=1}^{m-1}\E^{t_{i}}(\A_{t_{i+1}}-\A_{t_{i}})-\E^{t_{i-1}}
	(\A_{t_{i+1}}-\A_{t_{i}})\Bigr\|_{L_2(\Omega)}\nn\\
	&\phantom{\le}+\Bigl\|\sum_{i=1}^{m-1}A_{t_i,t_{i+1}}-\E^{t_{i-1}}
	(\A_{t_{i+1}}-\A_{t_{i}})\Bigr\|_{L_1(\Omega)}\nn\\
	&:=I_1+I_2+I_3+I_4.\label{manyiterms}
\end{align}
Since $f$ is bounded, we clearly have
\begin{equation}\label{I1fbnd}
	I_1\le 2\|f\|_{\C^0}m^{-1}.
\end{equation}
Next, we note that the sequence $\bigl(\A_{t_{i+1}}-\A_{t_{i}}-\E^{t_{i}}
(\A_{t_{i+1}}-\A_{t_{i}})\bigr)_{i=1,\ldots,m-1}$ is a martingale difference sequence with respect to the filtration $(\F_{t_{i+1}})_{i=1,\ldots,m-1}$. Therefore, the Burkholder-Davis-Gundy inequality implies 
\begin{equation}\label{I2fbnd}
	I_2^2\le \sum_{i=1}^{m-1}\Bigl\|\A_{t_{i+1}}-\A_{t_{i}}-\E^{t_{i}}
	(\A_{t_{i+1}}-\A_{t_{i}})\Bigr\|_{L_2(\Omega)}^2\le 4\|f\|_{\C^0}^2m^{-1}.
\end{equation}
Similarly, the sequence $\bigl(\E^{t_{i}}
(\A_{t_{i+1}}-\A_{t_{i}})-\E^{t_{i-1}}
(\A_{t_{i+1}}-\A_{t_{i}})\bigr)_{i=1,\ldots,m-1}$ is a martingale difference sequence with respect to the filtration $(\F_{t_{i}})_{i=1,\ldots,m-1}$, and we get 
\begin{equation}\label{I3fbnd}
	I_3^2\le \sum_{i=1}^{m-1}\Bigl\|\E^{t_{i}}
	(\A_{t_{i+1}}-\A_{t_{i}})-\E^{t_{i-1}}
	(\A_{t_{i+1}}-\A_{t_{i}})\Bigr\|_{L_2(\Omega)}^2\le 16\|f\|_{\C^0}^2m^{-1}.
\end{equation}
Finally, if $g\equiv0$, then $I_4=0$. If $g\not\equiv0$, then by condition \eqref{condtheorem} of the theorem $\theta>0$. Therefore, using \eqref{standardbound} we derive for any $i=1,\ldots,m-1$
\begin{align*}
	&\|\E^{t_{i-1}}(\A_{t_{i+1}}-\A_{t_{i}})-A_{t_{i},t_{i+1}}\|_{L_1(\Omega)}\\
	&\qquad\leq \Big\|\int_{t_{i}}^{t_{i+1}} f(L_r+g_{\kappa_n(r)})-f(L_r+\E^{t_{i-1}}g_{\kappa_n(r)}) \Big\|_{L_1(\Omega)}\\
	&\qquad\phantom{\le}+\Big\|\int_{t_{i}}^{t_{i+1}}  f(L_{\kappa_n(r)}+g_{\kappa_n(r)})+f(L_{\kappa_n(r)}+\E^{t_{i-1}}g_{\kappa_n(r)})\,dr\Big\|_{L_1(\Omega)}
	\\
	&\qquad\leq 2\|f\|_{\C^\theta}\int_{t_{i}}^{t_{i+1}}\|g_{\kappa_n(r)}-\E^{t_{i-1}}g_{\kappa_n(r)}\|_{L_1(\Omega)}^\theta\,dr\\
	&\qquad\leq 2\|f\|_{\C^\theta}(1+C_g)(t_{i+1}-t_{i-1})^{1+\theta\tau}\le 
	N\|f\|_{\C^\theta}(1+C_g)m^{-1-\theta\tau},
\end{align*}
where the penultimate inequality follows from  the fact that if $\kappa_n(r)\le t_{i-1}$, then $g_{\kappa_n(r)}=\E^{t_{i-1}}g_{\kappa_n(r)}$ and if $\kappa_n(r)\ge t_{i-1}$, then \eqref{a:gnew} is applicable and $\kappa_n(r)-t_{i-1}\le t_{i+1}-t_{i-1}$. Recalling the definition of $I_4$ in \eqref{manyiterms}, we get 
\begin{equation}\label{I4fbnd}
	I_4\le N\|f\|_{\C^\theta}(1+C_g)m^{-\theta\tau} \one_{\theta>0}.
\end{equation}
Collecting together bounds 	\eqref{I1fbnd}--\eqref{I4fbnd} and substituting them into \eqref{manyiterms},
we get 
\begin{equation*}
	\Bigl\|\A_t-\A_s-\sum_{i=1}^{m-1} A_{t_i,t_{i+1}}\Bigr\|_{L_1(\Omega)}\leq N\|f\|_{\C^\theta}(1+C_g)m^{-\theta\tau} \one_{\theta>0}+N\|f\|_{\C^0}m^{-\frac12}. 	
\end{equation*}
which implies \eqref{limpart}. The claimed bound \eqref{Lkappabound} is therefore given by \eqref{SSL3 cA}.
\end{proof}

\begin{remark}\label{r:restr}

The reader might observe two additional constraints introduced in \cref{l:approx} compared to \cref{L:32}. Specifically, we assumed that $\G\subset \F_{\kappa_n(S)}$ and that $p=2$. Both of these conditions are employed to derive the bound~\eqref{smallgap}. With a generic $p>2$, the convergence rate becomes suboptimal, given by $n^{-\bigl(\frac12 +\frac\theta\alpha\wedge\frac1p\bigr)+\eps}$. Without the assumption $\G\subset \F_{\kappa_n(S)}$, there is no deterministic bound on conditional moments of $L_r-L_{\kappa_n(r)}$ given $\G$ for $r\in[S,T]$.

These two restrictions substantially complicate the proof of the main results. The condition $p=2$ is the reason why we must employ the John--Nirenberg machinery at all and bound conditional expectations, instead of merely bounding $L_p(\Omega)$ moments directly as done, for example, in \cite{{ButDarGEr}}. The limitation $\G\subset \F_{\kappa_n(S)}$  leads to additional challenges in the buckling part of the proof of \cref{T:main}.
\end{remark}	

\subsection{Bounds  on $\cE^{f,n,1}$, $\cE^{f,n,2}$, $\cE^{f,n,3}$}\label{sec:E3}

In this part of the paper we apply generic bounds from \cref{sec:E12} to the error terms in decompositions \eqref{eq:error-decomposition} and \eqref{errordec2}.
\begin{corollary}\label{En1En2}
Assume that all the conditions of \cref{t:altm}  are satisfied. 
Then for any $\eps>0$ there exists a constant $N=N(\alpha, \beta,\theta, p,d,\|b\|_{\C^\beta},\eps,M)$ such that for any $0\leq s\leq t\leq 1$, $\sigma$-algebra $\G\subset\F_s$ and all $n\in\N$ the following  holds:
\begin{align}\label{eq:En1-bound}
	\|\cE_{s,t}^{f,n,1}\|_{L_p(\Omega)|\G}&\leq 
	N\|f\|_{\C^\theta} (t-s)^{1+\frac{(\theta-1)\wedge0}{\alpha}}\db{\phi-\phi^n}_{\scC^0_p|\G,[s,t]}\nn\\
	&\qquad + N\|f\|_{\C^\theta} (t-s)^{\frac32+\frac{(\theta-1)\wedge0}{\alpha}}[\phi-\phi^n]_{\scC^{1/2}_p|\G,[s,t]}
	;\\
	\|\cE_{s,t}^{f,n,2}\|_{L_p(\Omega)|\G}&\leq N\|f\|_{\C^\theta}(t-s)^{1+\frac{(\theta-1)\wedge0}{\alpha}}n^{-1}+ N\|f\|_{\C^\theta} (t-s)^{\frac32+\frac{(\theta-1)\wedge0}{\alpha}}n^{-\frac12-(\frac\beta\alpha\wedge\frac12)+\eps} .\label{eq:En2-bound}
\end{align}
\end{corollary}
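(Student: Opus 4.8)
The plan is to apply \cref{L:32} twice, once to each of the terms $\cE^{n,1}$ and $\cE^{n,2}$, with appropriate choices of the auxiliary functions $g,h$ and the exponents $\tau,\gamma,\eps_0$. Recall the decomposition \eqref{eq:error-decomposition}, where $\cE^{n,1}_{s,t}=\int_s^t b(L_r+\varphi_r)-b(L_r+\varphi^n_r)\,dr$ and $\cE^{n,2}_{s,t}=\int_s^t b(L_r+\varphi^n_r)-b(L_r+\varphi^n_{\kappa_n(r)})\,dr$. For the first term, I would set $g=\varphi$, $h=\varphi^n$; for the second, $g=\varphi^n$ and $h_r=\varphi^n_{\kappa_n(r)}$. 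In both cases the hypotheses \eqref{taugamma} with $\gamma=1/2$ reduce (under \eqref{eq:main-exponent}, which forces $\beta>1-\alpha/2$, hence $1/2+\frac{\beta-1}{\alpha}>0$ and $\frac{\beta-2}{\alpha}+1>-1/2$) to a valid range once we also pick $\tau$ close to $1$; and the a priori regularity input \eqref{a:g} for $g$ is exactly what \cref{lem:useful-lemma}(iii) provides: \eqref{eq:useful-bound} gives $\E^s|\varphi_t-\E^s\varphi_t|\le N|t-s|^{1+(\frac{\beta\wedge1}\alpha\wedge\frac{\nalpha}{\alpha q})-\eps}$ with $q=1$, so $\tau$ can be taken arbitrarily close to $1$ (in particular $>1-\frac{\beta}{\alpha}$ as needed), and likewise \eqref{eq:useful-bound-n} handles $g=\varphi^n$ for the second application.

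For $\cE^{n,1}$: apply \cref{L:32} with $g=\varphi$, $h=\varphi^n$, $\gamma=1/2$, $\tau$ close to $1$ and $C_g=N$ from \eqref{eq:useful-bound}. The output \eqref{L32bound} has three terms; the first is $N(t-s)^{1+\frac{(\beta-1)\wedge0}\alpha}\db{\varphi-\varphi^n}_{\scC^0_p,[s,t]}$; the second is $N(t-s)^{3/2+\frac{(\beta-1)\wedge0}\alpha}\dnew{\varphi-\varphi^n}{1/2}{p}{[s,t]}$, which by \cref{lem:useful-lemma}(ii) is bounded by $N(t-s)^{3/2+\frac{(\beta-1)\wedge0}\alpha}[\varphi-\varphi^n]_{\scC^{1/2}_p,[s,t]}$; the third term, with $\tau$ close enough to $1$, has exponent $1+\frac{\beta-2}\alpha+\tau>1+\frac{(\beta-1)\wedge0}\alpha$ and the same factor $\db{\varphi-\varphi^n}_{\scC^0_p,[s,t]}$, so it is absorbed into the first term (using $t-s\le1$). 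This yields \eqref{eq:En1-bound}.

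For $\cE^{n,2}$: apply \cref{L:32} with $g=\varphi^n$, $h_r=\varphi^n_{\kappa_n(r)}$, again $\gamma=1/2$, $\tau$ near $1$ and $C_g=N$ from \eqref{eq:useful-bound-n}. Now I need the two quantities $\db{g-h}_{\scC^0_p,[s,t]}=\sup_{r\in[s,t]}\|\varphi^n_r-\varphi^n_{\kappa_n(r)}\|_{L_p}$ and $\dnew{g-h}{1/2}{p}{[s,t]}$, as well as to verify \eqref{a:h} for $h$. Since $\varphi^n_r-\varphi^n_{\kappa_n(r)}=\int_{\kappa_n(r)}^r b(X^n_{\kappa_n(v)})\,dv$ and $\|b\|_{\C^0}<\infty$, we immediately get $\|\varphi^n_r-\varphi^n_{\kappa_n(r)}\|_{L_p}\le \|b\|_{\C^0}(r-\kappa_n(r))\le \|b\|_{\C^0}n^{-1}$, giving the first factor $\le Nn^{-1}$. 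For the seminorm $\dnew{\varphi^n-\varphi^n_{\kappa_n(\cdot)}}{1/2}{p}{[s,t]}$ I would split $\varphi^n_t-\varphi^n_{\kappa_n(t)}-\E^s(\varphi^n_t-\varphi^n_{\kappa_n(t)})$ using the triangle inequality into the contribution of $\varphi^n_t-\E^s\varphi^n_t$ and of $\varphi^n_{\kappa_n(t)}-\E^s\varphi^n_{\kappa_n(t)}$, controlled via \eqref{eq:useful-bound-n} with $q=p$, and the contribution of the ``deterministic-length'' increment $\int_{\kappa_n(t)}^t b\,dv$, which has $L_p$-norm $\le Nn^{-1}$; interpolating the two bounds (the process increment is both $\le N|t-s|^{1+(\frac{\beta\wedge1}\alpha\wedge\frac{\nalpha}{\alpha p})-\eps}$ and, being a difference of two such, one also uses the crude $\le Nn^{-1}$ bound against $|t-s|^{1/2}$) produces $\dnew{g-h}{1/2}{p}{[s,t]}\le N n^{-(\frac{\beta}\alpha\wedge\frac{\nalpha}{\alpha p}\wedge\frac12)+\eps}$. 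Finally \eqref{a:h}: $\|h_t-\E^s h_t\|_{L_1}\le \|\varphi^n_{\kappa_n(t)}-\E^s\varphi^n_{\kappa_n(t)}\|_{L_1}$, which by \eqref{eq:useful-bound-n} is $\le N|t-s|^{1+\cdots-\eps}$, so $\eps_0$ can be taken $>0$ (in fact close to $1$), and the third term of \eqref{L32bound} is again absorbed. Plugging these three estimates into \eqref{L32bound} and using $t-s\le1$ to consolidate gives \eqref{eq:En2-bound}.

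\textbf{Main obstacle.} The routine part is the first application; the delicate point is the seminorm estimate $\dnew{\varphi^n-\varphi^n_{\kappa_n(\cdot)}}{1/2}{p}{[s,t]}\le Nn^{-(\frac{\beta}\alpha\wedge\frac{\nalpha}{\alpha p}\wedge\frac12)+\eps}$ for the $\cE^{n,2}$ term — getting the full rate $n^{-(\frac12+\cdots)}$ in \eqref{eq:En2-bound} hinges on combining the $n^{-1}$ smallness of the discretization gap with the $|t-s|^{1/2}$-Hölder scaling afforded by \cref{lem:useful-lemma}(iii), and it is essential here (cf.\ \cref{rem:spoiler} and \cref{rem:WhyImportant}) that \cref{L:32} delivers the precise seminorm $\dnew{\cdot}{\gamma}{p}{\cdot}$ rather than $[\cdot]_{\scC^\gamma_p}$, since $\varphi^n_{\kappa_n(\cdot)}$ is far from $1/2$-Hölder in the naive sense but is well-behaved once one conditions.
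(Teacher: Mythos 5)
Your overall skeleton is the same as the paper's: both estimates come from applying \cref{L:32} with $\gamma=1/2$, taking $(g,h)=(\varphi,\varphi^n)$ for $\cE^{n,1}$ and $(g,h)=(\varphi^n,\varphi^n_{\kappa_n(\cdot)})$ for $\cE^{n,2}$, with \eqref{a:g}, \eqref{a:h} supplied by \cref{lem:useful-lemma}(iii), and your treatment of $\cE^{n,1}$ is fine. The genuine gap is in the step you yourself flag as delicate. First, the seminorm bound you claim, $\dnew{\varphi^n-\varphi^n_{\kappa_n(\cdot)}}{1/2}{p}{[s,t]}\le Nn^{-(\frac\beta\alpha\wedge\frac{\nalpha}{\alpha p}\wedge\frac12)+\eps}$, is short by a factor $n^{-1/2}$: inserted into the second term of \eqref{L32bound} it only gives $N(t-s)^{\frac32+\frac{(\beta-1)\wedge0}{\alpha}}n^{-(\frac\beta\alpha\wedge\frac{\nalpha}{\alpha p}\wedge\frac12)+\eps}$, not \eqref{eq:En2-bound}; what is needed is $\dnew{\varphi^n-\varphi^n_{\kappa_n(\cdot)}}{1/2}{p}{[s,t]}\le Nn^{-\frac12-(\frac\beta\alpha\wedge\frac{\nalpha}{\alpha p}\wedge\frac12)+\eps}$. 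Second, and more seriously, the derivation you sketch cannot produce that stronger bound even if the omission is a typo. For $s\le r'<\kappa_n(r)\le r\le t$ your two ingredients are the triangle-inequality estimate $\|\varphi^n_r-\varphi^n_{\kappa_n(r)}-\E^{r'}(\varphi^n_r-\varphi^n_{\kappa_n(r)})\|_{L_p(\Omega)}\le N|r-r'|^{1+\theta}$ with $\theta=\frac{\beta\wedge1}{\alpha}\wedge\frac{\nalpha}{\alpha p}-\eps$ (applying \eqref{eq:useful-bound-n} to each centered term separately) and the crude bound $Nn^{-1}$. Taking the minimum and dividing by $|r-r'|^{1/2}$, the supremum over $|r-r'|$ sits at the crossover $|r-r'|\sim n^{-1/(1+\theta)}$ and equals $Nn^{-\frac{1/2+\theta}{1+\theta}}$; for $\theta=1/2$ this is $n^{-2/3}$, while the target is $n^{-1}$. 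The triangle inequality destroys the product structure of the increment: neither centered $\varphi^n$-term carries any $n$-smallness, so no interpolation of these two bounds can recover the rate.

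The correct argument keeps the identity $\varphi^n_r-\varphi^n_{\kappa_n(r)}=(r-\kappa_n(r))\,b(L_{\kappa_n(r)}+\varphi^n_{\kappa_n(r)})$, so the whole increment is the small factor $(r-\kappa_n(r))\le n^{-1}\wedge|r-r'|$ times a bounded term, and then controls the conditional centering of that $b$-term via \eqref{YZcond} with $Z=b(L_{r'}+\varphi^n_{r'})$ together with \eqref{eq:basic moments} and \eqref{eq:useful-bound-n}; this yields a bound of the form $N(r-\kappa_n(r))\,|r-r'|^{(\frac{\beta\wedge1}\alpha\wedge\frac{\nalpha}{\alpha p}\wedge\frac12)-\eps}$, and splitting $(r-\kappa_n(r))\le n^{-1}\wedge|r-r'|$ appropriately between the two factors gives the required $n^{-\frac12-(\frac\beta\alpha\wedge\frac{\nalpha}{\alpha p}\wedge\frac12)+\eps}|r-r'|^{1/2}$. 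A secondary inaccuracy: the constraint in \eqref{taugamma} is $\tau>(2-\beta)/\alpha$, which exceeds $1$ whenever $\beta<2-\alpha$ (e.g.\ $\alpha=1$, $\beta=0.6$ is admissible under \eqref{eq:main-exponent}), so ``$\tau$ close to $1$'' does not suffice; one must take the full exponent $\tau=1+\frac{\beta\wedge1}{\alpha}\wedge\frac{\nalpha}{\alpha}-\delta$ provided by \cref{lem:useful-lemma}(iii) and use \eqref{eq:main-exponent} to verify \eqref{taugamma}.
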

\begin{proof}
Recall that $\beta>2-2\alpha$ and $\beta>1-\alpha/2$. Without loss of generality, we can assume that $\beta\le2$ and $\theta\le2$: indeed, if one of them is larger than $2$ then we replace the corresponding constant by $2$, this will not affect neither conditions \eqref{eq:main-exponent} and \eqref{eq:main-exponentth}, nor the bounds on the right-hand side of \eqref{eq:En1-bound} and \eqref{eq:En2-bound}. Thus, till the end of the proof choose arbitrary $\delta>0$ small enough so that
\begin{equation}\label{alphabeta}
	\theta\wedge\beta>\max(2-2\alpha, 1-\alpha/2)+\delta\alpha.
\end{equation}

To establish \eqref{eq:En1-bound} and \eqref{eq:En2-bound}, we will apply \cref{L:32} with  $\tau={1+\big(
	\frac{\beta\wedge1}\alpha\wedge 1\big)-\delta}$, $\gamma=1/2$. Thanks to \eqref{alphabeta}, we see that condition \eqref{taugamma}
holds.

First, let us obtain \eqref{eq:En1-bound}. We apply \cref{L:32} with $g=\phi$, $h=\phi^n$. It follows from \eqref{eq:useful-bound} with $q=1$ that \eqref{a:g} holds with $C_g\leq N$. Similarly, \eqref{eq:useful-bound-n} and \eqref{standardbound} imply that \eqref{a:h} holds. Thus all the assumptions of \cref{L:32} are satisfied. Taking into account \eqref{normbound} and the fact that $1+\frac{(\theta-1)\wedge0}{\alpha}\le 1< 1+\frac{\theta-2}{\alpha}+\tau$, we see that \eqref{L32bound} yields \eqref{eq:En1-bound}. 

Now let us prove \eqref{eq:En2-bound}. We will again apply \cref{L:32} with $g_t=\phi^n_t$, $h_t=\phi^n_{\kappa_n(t)}$ and the same $\tau$, $\gamma$ as above. Thanks to \eqref{eq:useful-bound-n} with $q=1$ we see that \eqref{a:g} holds with $C_g\leq N$. Further, if $0\le \kappa_n(t)\le s <t\le 1$ then 
\begin{equation*}
	\|\phi^n_{\kappa_n(t)}-\E^s \phi^n_{\kappa_n(t)}\|_{L_1(\Omega)}=0.
\end{equation*}
If $0\le s < \kappa_n(t) \le t\le 1$, then applying \eqref{eq:useful-bound-n} (in a very rough way) yields
\begin{equation*}
	\|\phi^n_{\kappa_n(t)}-\E^s \phi^n_{\kappa_n(t)}\|_{L_1(\Omega)}\le N |\kappa_n(t)-s|\le N(t-s).
\end{equation*}
Hence, \eqref{a:h} holds. Thus all the assumptions of \cref{L:32} are satisfied. Therefore \eqref{L32bound} implies
\begin{align}\label{almostdonetwo}
	\|\cE_{s,t}^{f,n,2}\|_{L_p(\Omega)|\G}&=\Bigl\|\int_s^t f(L_r+\varphi^n_r)-f(L_s+\varphi^n_{\kappa_n(r)})\,dr\Bigr\|_{L_p(\Omega)|\G}\nn\\
	&\le  N\|f\|_{\C^\theta}(t-s)^{1+\frac{(\theta-1)\wedge0}{\alpha}}\db{\phi^n-\phi^n_{\kappa_n(\cdot)}}_{\scC^0_p|\G,[s,t]}\nn\\
	&\phantom{\le}+ N\|f\|_{\C^\theta} (t-s)^{\frac32+\frac{(\theta-1)\wedge0}{\alpha}}\ddnew{\phi^n-\phi^n_{\kappa_n(\cdot)}}{1/2}{p}{[s,t]}.
\end{align}
Note that for any $r\in[0,1]$, we clearly have
\begin{equation}\label{nkappan}
	\phi^n_r-\phi^n_{\kappa_n(r)}=\big(r-\kappa_n(r)\big)b(L_{\kappa_n(r)}+\phi^n_{\kappa_n(r)}).
\end{equation}
Since $b$ is bounded, this implies 
\begin{equation}\label{AD21}
	\db{\phi^n_\cdot-\phi^n_{\kappa_n(\cdot)}}_{\scC^0_p|\G,[s,t]}\le \|b\|_{\C^0}\sup_{r\in[s,t]}|r-\kappa_n(r)|\le N n^{-1}.
\end{equation}
Let now $s\le r'\le r\le t$. If $\kappa_n(r)\le r'$, then  both $\varphi^n_r$ and $\varphi^n_{\kappa_n(r)}$ are $\cF_{r'}$-measurable, so trivially
\begin{align}
	&\phi^n_{r}-\phi^n_{\kappa_n(r)}- \E^{r'}[\phi^n_{r}-\phi^n_{\kappa_n(r)}]=0
	\label{zerozerozero}
\end{align}
Otherwise if $s\le r'<\kappa_n(r)\le r\le t$, then by \eqref{nkappan} and \eqref{YZcond},
\begin{align*}
	&\E^{r'}\big|\phi^n_{r}-\phi^n_{\kappa_n(r)}- \E^{r'}[\phi^n_{r}-\phi^n_{\kappa_n(r)}]\big|\\
	&\qquad=(r-\kappa_n(r)) \E^{r'}\big|b(L_{\kappa_n(r)}+\phi^n_{\kappa_n(r)})- \E^{r'}b(L_{\kappa_n(r)}+\phi^n_{\kappa_n(r)})\big|\\
	&\qquad\le 2(r-\kappa_n(r)) \E^{r'}\big|b(L_{\kappa_n(r)}+\phi^n_{\kappa_n(r)})- b(L_{r'}+\phi^n_{r'})\big|\\
	&\qquad\le N(r-\kappa_n(r)) \E^{r'}\big(\big|L_{\kappa_n(r)}-L_{r'}\big|^{\beta\wedge1}\wedge1
	+|\phi^n_{\kappa_n(r)}-\phi^n_{r'}|^{\beta\wedge1}\big)\\
	&\qquad\le N(r-\kappa_n(r))\bigl(|r-r'|^{\big(\frac{\beta\wedge1}\alpha\wedge1\big)-\eps}+|r-r'|^{\beta\wedge1}\bigr),
\end{align*}
where in the last inequality we used that $L_{\kappa_n(r)}-L_{r'}$ is independent of $\cF_{r'}$, \eqref{eq:basic moments}, and boundedness of $b$ (which implies Lipschitzness of $\varphi^n$). Note that if $\alpha\ge1$, then clearly $\beta\wedge1\ge\frac{\beta\wedge1}\alpha$. Further, if $\alpha<1$, then $\beta>1/2$ thanks to \eqref{eq:main-exponent}. Thus in both cases $\beta\wedge1\ge
\frac{\beta\wedge1}\alpha\wedge\frac12=\frac{\beta}\alpha\wedge\frac12$, since $\alpha\le2$. We continue the above inequality by taking $\|\cdot\|_{L_p(\Omega|\G}$ norms and writing
\begin{align*}
	\big\|\E^{r'}\big|\phi^n_{r}-\phi^n_{\kappa_n(r)}- \E^{r'}[\phi^n_{r}-\phi^n_{\kappa_n(r)}]\big|\big\|_{L_p(\Omega)|\G}
	&\le N(r-\kappa_n(r))|r-r'|^{\big(\frac{\beta}\alpha\wedge\frac12\big)-\eps}\\
	&\le Nn^{-\frac12-(\frac{\beta}\alpha\wedge\frac12)+\eps}|r-r'|^{1/2},
\end{align*}
where in the last inequality we used that $r-\kappa_n(r)\le n^{-1}\wedge |r-r'|$. This together with \eqref{zerozerozero} yields
\begin{equation}\label{goodnormbound}
	\ddnew{\phi^n_\cdot-\phi^n_{\kappa_n(\cdot)}}{1/2}{p}{[s,t]}\le Nn^{-\frac12-(\frac{\beta}\alpha\wedge\frac12)+\eps}.
\end{equation}
Substituting this and \eqref{AD21} into \eqref{almostdonetwo}, we finally get
\begin{equation*}
	\|\cE_{s,t}^{f,n,2}\|_{L_p(\Omega)}\le  N\|f\|_{\C^\theta}(t-s)^{1+\frac{(\theta-1)\wedge0}{\alpha}}n^{-1}+ N\|f\|_{\C^\theta} (t-s)^{\frac32+\frac{(\theta-1)\wedge0}{\alpha}}n^{-\frac12-(\frac{\beta}\alpha\wedge\frac12)+\eps}. \qedhere
\end{equation*} 
\end{proof}

\begin{remark}\label{rem:WhyImportant}
We see now why it was important in the derivation of \eqref{eq:En2-bound} that the seminorm $\ddnew{\cdot}{\gamma}{p}{[s,t]}$ rather than $[\cdot]_{\scC^\gamma_p|\G,[s,t]}$ appeared in \eqref{L32bound}, recall \cref{rem:spoiler}. Indeed, by taking $r\in[s,t]$ such that $r=\kappa_n(r)$ and $s\le r'\le r\le t$ it is easy to see that one has
\begin{equation*}
	\|\phi^n_{r}-\phi^n_{\kappa_n(r)}- \phi^n_{r'}-\phi^n_{\kappa_n(r')}]\|_{L_p(\Omega)|\G}
	=(r'-\kappa_n(r'))\|b(L_{\kappa_n(r')}+\phi^n_{\kappa_n(r')})\|_{L_p(\Omega)|\G}.
\end{equation*}
This implies that  $[\phi^n_\cdot-\phi^n_{\kappa_n(\cdot)}]_{\scC^\gamma_p,[s,t]}=\infty$ for any $\gamma>0$, and is obviously much worse than \eqref{goodnormbound}.
Note however that for \eqref{eq:En1-bound} we simply bounded $\ddnew{\cdot}{\gamma}{p}{[s,t]}$ by $[\cdot]_{\scC^\gamma_p|\G,[s,t]}$.
\end{remark}

\begin{corollary}\label{cor:2}
Assume that all the conditions of \cref{t:altm}  are satisfied. Then for any $\eps\in(0,1/2)$  there exists a constant $N=N(\alpha,\beta,\theta,d,\|b\|_{\C^\beta},\eps,M)$ such that for any $0\leq s\leq t\leq 1$, $n\in\N$, and any $\sigma$-algebra $\G\subset\F_{\kappa_n(s)}$ the following  holds:
\begin{equation}\label{eq:E3n-bound}
	\|\cE_{s,t}^{f,n,3}\|_{L_2(\Omega)|\G}\leq N \|f\|_{\C^\theta}n^{-\bigl(\frac12 +\frac\theta\alpha\wedge\frac12 \bigr)+2\eps}|t-s|^{\half+\eps}.
\end{equation}
\end{corollary}
\begin{proof}
Choose $\delta>0$ small enough so that
$$
\theta\wedge \beta\wedge 1>1-\frac\alpha2+\delta\alpha.
$$
We apply \cref{l:approx} with $g=\phi^n$ and $\theta\wedge1$ in place of $\theta$.
Note that the rate provided in \eqref{Lkappabound} is consistent with \eqref{eq:E3n-bound}, since $\frac{\theta}{\alpha}\wedge\frac12=\frac{\theta\wedge1}{\alpha}\wedge\frac{1}{2}$. Therefore it remains to verify the conditions of \cref{l:approx}.

For any $0\le s\le t \le 1$, $r\in [s,1]$ we have by 	\eqref{eq:useful-bound-n} with $q=1$
\begin{align*}
	\E^{s}|\E^{s}\varphi^n_r-\E^t\varphi^n_r|
	=\E^{s}|\E^{t}[\E^{s}\varphi^n_r-\varphi^n_r]|\leq\E^{s}|\E^{s}\varphi^n_{r}-\varphi^n_{r}|	\le N |r-s|^{1-\delta+\frac{\beta\wedge\alpha\wedge 1}{\alpha}}.
\end{align*}
Therefore, condition \eqref{a:gnew} is satisfied with $\tau=1-\delta+\frac{\beta\wedge\alpha\wedge 1}{\alpha}$ and $C_g=N$.
We note that if $ \beta\wedge\alpha\wedge1=\beta\wedge1$, then using that $\frac{\theta}{\alpha}\wedge\frac12\le\frac{\theta\wedge1}{\alpha}$ and $\frac{1}{2}-\frac{1}{\alpha}+\frac{\beta\wedge1}{\alpha}>\delta$, we get 
$$
\tau=1-\delta+\frac{\beta\wedge 1}{\alpha}>\frac12+\frac1\alpha\ge \frac12+\frac1\alpha +\frac{\theta}{\alpha}\wedge\frac12-\frac{\theta\wedge1}{\alpha}
$$
and condition \eqref{a:tau} holds. Alternatively, if $\beta\wedge\alpha\wedge 1=\alpha$, then we use that  $\frac12\ge\frac{\theta}{\alpha}\wedge\frac12$ and $1-\frac1\alpha+\frac{\theta\wedge1}{\alpha}>\delta$ to get  
$$
\tau=2-\delta>\frac12+\frac1\alpha+\frac{\theta}{\alpha}\wedge\frac12-\frac{\theta\wedge1}{\alpha}.
$$
Hence also in this case  condition \eqref{a:tau}. We also see that $\phi^n_{\kappa_n(t)}$ is $\F_{(\kappa_n(t)-\frac1n)\vee0}$-measurable for $t\in[0,1]$. Thus all the conditions of \cref{l:approx} are satisfied and we get \eqref{eq:E3n-bound}.
\end{proof}

Gathering the error bounds of \cref{En1En2} and \cref{cor:2}, we finally derive the following crucial conditional quadrature estimate. 

\begin{corollary}\label{c:mainc}
Assume that all the conditions of \cref{t:altm}  are satisfied. 
Then for any $\eps\in(0,\frac12+\frac{(\theta-1)\wedge0}\alpha)$ there exists a constant $N_0=N_0(\alpha, \beta,\theta, d,\|b\|_{\C^\beta},\eps,M)$ such that for any $n\in\N$, $k\in\{0,1,...n\}$, $0\leq k/n\le s\leq t\leq 1$,   the following  holds:
\end{corollary}
\begin{align}\label{boundcor}
&\Bigl\|\int_s^t f(X_r)\,dr-\int_s^t f(X^n_{\kappa_n(r)})\,dr\Bigr\|_{L_2(\Omega)|\F_{\frac{k}n}}\nn\\
&\qquad\le 
N_0\|f\|_{\C^\theta}|t-s|^{\frac12+\eps}\big(n^{-\bigl(\frac12 +\frac{\beta\wedge\theta}\alpha\wedge\frac12 \bigr)+2\eps}+\|\varphi-\varphi^n\|_{\scC^0_2|\F_{\frac{k}n},[s,t]}+[\varphi-\varphi^n]_{\scC^{1/2}_2|\F_{\frac{k}n},[s,t]}\big).
\end{align}
\begin{proof}
Fix $k/n\le s\le t\le 1$.  We use \eqref{eq:En1-bound}, \eqref{eq:En2-bound}, and \eqref{eq:E3n-bound} with $\G=\F_{\frac{k}n}$ to bound each term in \eqref{eq:error-decomposition}.
Since $k/n\le s$, we see that $k/n\le \kappa_n(s)$. Therefore, $\F_{\frac{k}n}\subset \F_{\kappa_n(s)}$ and the conditions of \cref{cor:2} hold.
Using that $1+\frac{(\theta-1)\wedge0}{\alpha}>\frac12+\eps$ and $\bigl(\frac12 +\frac{\beta\wedge\theta}\alpha\wedge\frac12 \bigr)-\eps<1$, we get the desired bound.
\end{proof}

\subsection{Main proofs}\label{sec:mainproof}

\begin{proof}[Proof of \cref{T:main}]
Fix $\eps>0$ small enough so that 
$$	
\gamma_0:=\frac12 +\frac{\beta}\alpha\wedge\frac12-2\eps>0.
$$

In the proof we will apply \eqref{boundcor} with $f=b$, $\theta=\beta$. We take now $\Delta>0$ small enough so that 
\begin{equation}\label{deltaeq}
	\Delta^{\eps} N_0\|b\|_{\C^\beta}\leq 1/4,
\end{equation}
where $N_0$ is the constant from \eqref{boundcor}. Recall the decomposition \eqref{phiphin}.  Fix $n\in\N$ and put
\begin{equation}\label{adefhard}
	\A_t^n:=X_t-X^n_t=\phi_t-\phi_t^n=\int_0^t (b(X_r)-b(X^n_{\kappa_n(r)})\,dr+x_0-x_0^n,\quad t\in[0,1].
\end{equation}	

\textbf{Step 1}.
We claim that for any $S\in\{0,\frac1n,\frac2n,\ldots,1\}$,  $T\in[S,1\wedge(S+\Delta)]$ we have
\begin{equation}\label{eq:phieq}
	\,[\A^n]_{\scC^{1/2}_2|\F_{S},[S,T]}\leq n^{-{\gamma_0}}+|\A^n_{S}|.	
\end{equation}

Indeed, fix $S\in\{0,\frac1n,\frac2n,\ldots,1\}$, $T\in[S,1\wedge(S+\Delta)]$. Note that for any $S\le s\le t \le T$
$$
\A^n_t-\A^n_s=(\phi_t-\phi_s)-(\phi^n_t-\phi^n_s)=\int_s^t b(X_r)\,dr-\int_s^t b(X^n_{\kappa_n(r)})\,dr.
$$
By taking in \eqref{boundcor} $k/n=S$, we get from the above identity
\begin{align}\label{boundcor2}
	\bigl\|\A^n_t-\A^n_s\bigr\|_{L_2(\Omega)|\F_{S}}&\le 
	N_0\|b\|_{\C^\beta}|t-s|^{\frac12+\eps}\big(n^{-\gamma_0}+\|\varphi-\varphi^n\|_{\scC^0_2|\F_{S},[S,T]}+[\varphi-\varphi^n]_{\scC^{1/2}_2|\F_S,[S,T]}\big)\nn\\
	&\le	N_0\|b\|_{\C^\beta}|t-s|^{\frac12+\eps}\big(n^{-\gamma_0}+|\phi_{S}-\phi^n_{S}|+2[\varphi-\varphi^n]_{\scC^{1/2}_2|\F_{S},[S,T]}\big)	,
\end{align}
where we used  that if $s,t\in [S,T]$, then trivially both (semi)norms on the right-hand side of  equation \eqref{boundcor} can be replaced by the ones on $[S,T]$.  
Dividing  \eqref{boundcor2} by $|t-s|^{1/2}$ and taking supremum over $s,t\in[S,T]$, $s\le t$, we get
\begin{equation*}
	\,[\A^n]_{\scC^{1/2}_2|\F_{S},[S,T]}\le N_0\|b\|_{\C^\beta} \Delta^{\eps}\big(n^{-\gamma_0}+|\A^n_{S}|+2[\A^n]_{\scC^{1/2}_2|\F_{S},[S,T]}\big),
\end{equation*}
where we also used the definition of $\A^n$ in \eqref{adefhard} and the inequality $t-s\le T-S\le \Delta$.
By our choice of $\Delta$ \eqref{deltaeq}, the estimate buckles and by putting $2N_0\|b\|_{\C^\beta}\Delta^{\eps}[\A^n]_{\scC^{1/2}_2|\F_{S},[S,T]}$ to the left-hand side, we get
\eqref{eq:phieq}.

\textbf{Step 2}. We get rid of the assumption that $S$ is a gridpoint and  claim that for any $0\le S< T\le 1$, $T\in[S,1\wedge(S+\Delta)]$ we have
\begin{equation}\label{claim}
	\|\A^n_T-\A^n_S\|_{L_2(\Omega)|\F_{S}}\leq \Delta^{\frac12}( n^{-\gamma_0}+|\A^n_{S}|)+4\|b\|_{\C^0}n^{-1}.	
\end{equation}

Let $0\le S<T\le 1$, $T\le S+\Delta$.  If $T-S\le \frac1n$, there is nothing to prove: \eqref{claim} follows from the fact that $|\A^n_T-\A^n_S|\le 2\|b\|_{\C^0}|T-S|\le 2\|b\|_{\C^0}n^{-1}$. Therefore, we assume from now on that $T>S+\frac1n$. Denote $S':=\kappa_n(S)+\frac1n$; that is, $S'$ is the smallest gridpoint strictly bigger than $S$. We see that $S'>S$ and therefore    we have
\begin{align}\label{step121}
	\|\A_T^n-\A_S^n\|_{L_2(\Omega)|\F_{S}}&\le \|\A_T^n-\A_{S'}^n\|_{L_2(\Omega)|\F_{S}}+\|\A_{S'}^n-\A_S^n\|_{L_2(\Omega)|\F_{S}}\nn\\
	&= \bigl(\E\bigl[\E(|\A^n_T-\A^n_S|^2|\F_{S'})\bigl|\F_S\bigr]\bigr)^{\frac12}+\|\A_{S'}^n-\A_S^n\|_{L_2(\Omega)|\F_{S}}.
\end{align}	
Next, since $S'$ is a gridpoint we have $S'\in \{0,\frac1n,\frac2n,...,1\}$ and $T\le S'+\Delta$.
Therefore, the results of Step~1 are applicable and we get from  \eqref{eq:phieq}
\begin{equation*}
	\|\A^n_T-\A^n_{S'}\|_{L_2(\Omega)|\F_{S'}}\leq \Delta^{\frac12}( n^{-\gamma_0}+|\A^n_{S'}|)\le \Delta^{\frac12}( n^{-\gamma_0}+|\A^n_{S}|)+|\A^n_S-\A^n_{S'}|.
\end{equation*}	
Substituting this back into \eqref{step121} and using again that $|\A^n_S-\A^n_{S'}|\le 	2\|b\|_{\C^0} |S-S'|\le 2\|b\|_{\C^0} n^{-1}$, we get  \eqref{claim}.

\textbf{Step 3}. Now with \eqref{claim} in hand we apply the weighted John-Nirenberg inequality \cref{prop:vmo_john_nirenberg} to the process $\A^n$ introduced in \eqref{adefhard}. Setting $\xi^n_t:=\Delta^{\frac12}( n^{-\gamma_0}+|\A^n_{t}|)+4\|b\|_{\C^0}n^{-1}$, $t\in[0,1]$, we see that both processes $\A^n$ and $\xi^n$ are continuous and \eqref{eq:vmo_gamma} holds thanks to \eqref{claim}. Therefore \eqref{jnres} implies that for any $p\ge1$ there exists $N_1=N_1(p)$ independent of $n$ such that for any $0\le S<T\le 1$, $T\le S+\Delta$
\begin{align*}
	\|\sup_{r\in[S,T]}|\A^n_r-\A^n_S|\,\|_{L_p(\Omega)|\F_S}&\le 	 N_1\Delta^{\frac12}n^{-\gamma_0}+N_1\Delta^{\frac12}\|\sup_{r\in[S,T]}|\A^n_r|\,\|_{L_p(\Omega)|\F_S}+4N_1\|b\|_{\C^0}n^{-1}\\
	&\le 	 N_1\Delta^{\frac12}n^{-\gamma_0}+N_1\Delta^{\frac12}|A_S|\\
	&\phantom{\le}+N_1\Delta^{\frac12}\|\sup_{r\in[S,T]}|\A^n_r-A_S|\,\|_{L_p(\Omega)|\F_S}+4N_1\|b\|_{\C^0}n^{-1}.
\end{align*}
Take now $\Delta$ small enough so that in addition to our standing assumption 
\eqref{deltaeq} we have also
\begin{equation*}
	N_1\Delta^{\frac12} \leq 1/2.
\end{equation*}
Then the estimate buckles and we get
\begin{equation*}
	\|\sup_{r\in[S,T]}|\A^n_T-\A^n_S|\,\|_{L_p(\Omega)}\le 	 n^{-\gamma_0}+\|A_S\|_{L_p(\Omega)}+8N_1\|b\|_{\C^0}n^{-1},
\end{equation*}
whenever $0\le S<T\le 1$, $T-S<\Delta$. Iterating this bound $\lceil \Delta^{-1}\rceil$ times yields
\begin{equation*}
	\|\sup_{r\in[0,1]}|\A^n_r|\,\|_{L_p(\Omega)}\le 	 Nn^{-\gamma_0}+N|x_0-x_0^n|,
\end{equation*}
for some $N=N(d,\alpha.\beta,p,\eps,M,\|b\|_{\C^\beta})$, which is the claimed error estimate \eqref{eq:main-rate}.
\end{proof}

\begin{proof}[Proof of \cref{C:main}]
The statement follows from \cref{T:main}  by the standard argument. Namely, we fix 	$\eps>0$ and choose $p>1/\eps$. Set
$$
\eta(\omega):=\sup_{n\in\N}(n^{\frac12 +\frac{\beta}\alpha\wedge\frac12-2\eps}\|X(\omega)-X^n(\omega)\|_{\C^0([0,1])}).
$$
Then by \cref{T:main},
\begin{align*}
	\E \eta^p&=\E\bigl[\sup_{n\in\N}(n^{p(\frac12 +\frac{\beta}\alpha\wedge\frac12-2\eps)}\|X(\omega)-X^n(\omega)\|_{\C^0([0,1])}^p)\bigr]
	\\
	&\le \E\bigl[\sum_{n\in\N}n^{p(\frac12 +\frac{\beta}\alpha\wedge\frac12-2\eps)}\|X(\omega)-X^n(\omega)\|_{\C^{0}([0,1])}^{p}\bigr]\\
	&\le N \sum_{n\in\N}n^{-p\eps}<\infty.
\end{align*}
Thus $\eta<\infty$ a.s. which completes the proof. 
\end{proof}

\begin{proof}[Proof of \cref{t:altm}]
We argue as in the proof of \cref{c:mainc}, but we  skip the buckling step.
Take $\eps>0$ small enough so that 
$$	
\gamma_{f}:=\frac12 +\frac{\beta\wedge\theta}\alpha\wedge\frac12-2\eps>0.
$$
Fix $n\in\N$ and put
\begin{equation}\label{afdef}
	\A_t^{n,f}:=\int_0^t (f(X_r)-f(X^n_{\kappa_n(r)}))\,dr,\quad t\in[0,1].
\end{equation}	

\textbf{Step~1}. Take $\Delta=\Delta(\alpha,\beta,d,\|b\|_{\C^\beta},\eps,M)$ as in \eqref{deltaeq}. Then \cref{c:mainc} and  \eqref{eq:phieq} imply for any $S\in\{0,\frac1n,\frac2n,...,1\}$,  $T\in[S,1\wedge(S+\Delta)]$ 
\begin{align}\label{altmbound}
	\|\A_T^{n,f}-\A_S^{n,f}\|_{L_2(\Omega)|\F_S}&\le N\|f\|_{\C^\theta} (n^{-\gamma_f}+\|\varphi-\varphi^n\|_{\scC^0_2|\F_{S},[S,T]}+[\varphi-\varphi^n]_{\scC^{1/2}_2|\F_{S},[S,T]})\nn\\
	&\le N\|f\|_{\C^\theta} (n^{-\gamma_f}+|\varphi_S-\varphi^n_S|+2[\varphi-\varphi^n]_{\scC^{1/2}_2|\F_{S},[S,T]})\nn\\
	&\le N\|f\|_{\C^\theta} (n^{-\gamma_f}+|\varphi_S-\varphi^n_S|)
\end{align}
for $N=N(\alpha,\beta,\theta, d,\|b\|_{\C^\beta},\eps,M)$. 

\textbf{Step~2}. We remove the restriction in \eqref{altmbound} that $S$ is a grid point. Let $0\le S<T\le 1$, $T\le S+\Delta$. We note that again that if $T-S\le \frac1n$, then $|\A_T^{n,f}-\A_S^{n,f}|\le 2 \|f\|_{\C^0}n^{-1}$ and  \eqref{altmbound} holds. Otherwise, if $T>S+\frac1n$, we put $S':=\kappa_n(S)+\frac1n$. Since $S'$ is a gridpoint, we have from Step~1
\begin{align}\label{aboveeq}
	\|\A^{n,f}_T-\A^{n,f}_{S'}\|_{L_2(\Omega)|\F_{S'}}&\leq  N\|f\|_{\C^\theta} (n^{-\gamma_f}+|\varphi_{S'}-\varphi^n_{S'}|)\nn\\
	&\le N\|f\|_{\C^\theta} (n^{-\gamma_f}+|\varphi_{S}-\varphi^n_{S}|)+N\|f\|_{\C^\theta}n^{-1},
\end{align}	
where we used that $|\phi_S-\phi_{S'}|\le \|b\|_{\C^0}n^{-1}$ and 
$|\phi^n_S-\phi^n_{S'}|\le \|b\|_{\C^0}n^{-1}$.
Therefore, using that $\F_S\subset\F_{S'}$ and the boundedness of $f$, we get from \eqref{aboveeq} 
for any $0\le S<T\le 1$, $T\le S+\Delta$
\begin{align}\label{fjn}
	\|\A_T^{n,f}-\A_S^{n,f}\|_{L_2(\Omega)|\F_{S}}&\le \|\A_T^{n,f}-\A_{S'}^{n,f}\|_{L_2(\Omega)|\F_{S}}+\|\A_{S'}^{n,f}-\A_S^{n,f}\|_{L_2(\Omega)|\F_{S}}\nn\\
	&\le N\|f\|_{\C^\theta} (n^{-\gamma_f}+|\varphi_{S}-\varphi^n_{S}|),
\end{align}	
where  $N=N(\alpha,\beta,\theta,d,\|b\|_{\C^\beta},\eps,M)$.

\textbf{Step~3}. Now we apply the  weighted John-Nirenberg inequality \cref{prop:vmo_john_nirenberg} to the process $\A^{n,f}$. Thanks to the boundedness of $f$ and $b$ the processes $\A^{n,f}$ and $\phi-\phi^n$ are continuous. Therefore, by \cref{prop:vmo_john_nirenberg}, inequality \eqref{fjn} yields for any $0\le S<T\le 1$, $T\le S+\Delta$, $p\ge1$
\begin{align*}
	\|\sup_{r\in[S,T]}|\A^{n,f}_r-\A^{n,f}_S|\,\|_{L_p(\Omega)}&\le 	 N\|f\|_{\C^\theta} n^{-\gamma_f}+N\|f\|_{\C^\theta}\|\sup_{r\in[S,T]}|\phi_r-\phi_r^n|\,\|_{L_p(\Omega)}\\
	&\le 	N\|f\|_{\C^\theta} (n^{-\gamma_f} + |x_0-x_0^n|),
\end{align*}
where the last inequality follows from \cref{T:main} and $N=N(\alpha,\beta,\theta,d,\|b\|_{\C^\beta},\eps,M,p)$ is independent of $S,T$. Iterating the above bound $\lceil \Delta^{-1}\rceil$ times, we get \eqref{eq:levy-rate}.
\end{proof}

\begin{proof}[Proof of \cref{t:quadr}]
We will use the same proof strategy as in the proofs of \cref{T:main,t:altm}: namely, we first obtain the conditional bound at grid points, then remove the grid points restriction, and finally, apply the John-Nirenberg inequality.

We note that the theorem does not follow directly from \cref{t:altm} because the function $f$ is allowed to be of any non-negative regularity (or just bounded if $\theta=0$). We stress that we do not impose the restriction $\theta > 1 - \frac{\alpha}{2}$ as before.

Take $\eps>0$ small enough so that 
$$	
\gamma_{L,f}:=\frac12 +\frac\theta\alpha\wedge\frac12-2\eps>0.
$$
Fix $n\in\N$ and put
\begin{equation*}
	\A_t^{n,L,f}:=\int_0^t (f(L_r)-f(L^n_{\kappa_n(r)}))\,dr,\quad t\in[0,1].
\end{equation*}		

\textbf{Step~1}.
Let $S\in\{0,\frac1n,\frac2n,\ldots,1\}$,  $T\in[S,1]$ and  apply \cref{l:approx} with $g\equiv0$.	We get 
\begin{equation}\label{altmboundL}
	\|\A_T^{n,L,f}-\A_S^{n,L,f}\|_{L_2(\Omega)|\F_S}\le N\|f\|_{\C^\theta} n^{-\gamma_{L,f}}
\end{equation}
for $N=N(\alpha,\theta,d,\eps,M)$. 

\textbf{Step~2}. Now we take arbitrary $S\in[0,1]$. Let $T\in[S,1]$. Let $S':=\kappa_n(S)+\frac1n$. As before, if $T-S\le1/n$, then there is nothing to prove: one have 	$|\A_T^{n,L,f}-\A_S^{n,L,f}|\le 2\|f\|_{\C^0} n^{-1}$. Alternatively, we have $T\ge S'$ and therefore by \eqref{altmboundL} and boundedness of $f$
\begin{align*}
	\|\A_T^{n,L,f}-\A_S^{n,L,f}\|_{L_2(\Omega)|\F_{S'}}&\le	\|\A_T^{n,L,f}-\A_{S'}^{n,L,f}\|_{L_2(\Omega)|\F_{S'}}+\|\A_{S'}^{n,L,f}-\A_{S}^{n,L,f}\|_{L_2(\Omega)|\F_{S'}}\\
	&\le  N\|f\|_{\C^\theta} n^{-\gamma_{L,f}}.
\end{align*}
This implies that 	\eqref{altmboundL} holds for any $0\le S\le T\le 1$.

\textbf{Step~3}. Since the process $\A^{n,L,f}$ is continuous and \eqref{altmboundL} holds for any $0\le S\le T\le 1$, we see that all the conditions of the  John-Nirenberg inequality \cref{prop:vmo_john_nirenberg} are satisfied. Therefore for any $p\ge1$ there exists a constant $N=N(\alpha,\theta,d,\eps,p,M)>0$ such  that
\begin{equation*}
	\|\sup_{r\in[0,1]}|\A^{n,L,f}_r|\,\|_{L_p(\Omega)}\le N\|f\|_{\C^\theta} n^{-\gamma_{L,f}},
\end{equation*}
which is the desired bound \eqref{eq:levy-rate}.
\end{proof}

\begin{appendix}
\section{Proofs of the well-posedness of SDE \eqref{eq:main-SDE}}	

To show that SDE \eqref{eq:main-SDE} is strongly well-posed we will run a fixed point argument. Throughout the Appendix we will assume that assumptions \ref{A:1}--\ref{A:3} holds. We will assume without loss of generality that $\beta\le2$.  For a  measurable bounded function $f\colon[0,1]\times \Omega\to\R^d$, $\gamma\in(0,1]$, $p\ge1$, $T\in(0,1]$ put
\begin{equation*}
	\|f\|_{{\scC^{\gamma}_p,[0,T]}}:=\sup_{t\in[0,T]}\|f(t)\|_{L_p(\Omega)}+
	\sup_{s,t\in[0,T]}\frac{\|f(t)-f(s)\|_{L_p(\Omega)}}{(t-s)^\gamma}.
\end{equation*}
and consider a mapping 
$$
\cS f(t):=\eta+\int_0^t b(L_s+f_s)\, ds,\quad 0\le t\le 1,
$$
where $\eta\in\R^d$ is a $\F_0$--measurable vector.
We claim the following contraction bound.
\begin{lemma}\label{L:cboundSEU}
	Let $\phi,\psi\colon[0,1]\times\Omega\to\R^d$ be bounded, adapted, measurable functions. Suppose that $1-\alpha/2<\beta<2$. Assume that there exist constants $\tau\in(0,1]$, $\tau>(2-\beta)/\alpha$, $C_\phi,C_\psi>0$, $T_0\in(0,1]$, $S\in[0,1-T_0]$ such that 
	for any $S\le s \le t \le S+T_0$ one has
	a.s.
	\begin{equation}\label{phipsicond}
		\E^{s}|\phi_t-\E^{s}\phi_t|\le C_\phi |t-s|^{\tau};\quad 
		\E^{s}|\psi_t-\E^{s}\psi_t|\le C_\psi |t-s|^{\tau}.
	\end{equation} 
	Then for any $p\ge2$ there exist constants $N=N(\alpha,\beta,\tau,p,d,\|b\|_{\C^\beta})$, $\eps=\eps(\alpha,\beta,\tau)>0$ such that for any $T\in(0,T_0]$ and $S\in[0,1-T]$ one has
	\begin{equation}\label{contbound}
		\|\cS\phi-\cS\psi\|_{{\scC^{\frac12}_p,[S,S+T]}}\le N (1+C_\phi)T^\eps \db{\phi-\psi}_{\scC^{\frac12}_p,[S,S+T]}+\|\cS\phi(S)-\cS\psi(S)\|_{L_p(\Omega)}.
	\end{equation}
\end{lemma}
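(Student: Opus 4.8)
The plan is to read off \eqref{contbound} directly from \cref{L:32}. Since $\cS\phi(0)=\cS\psi(0)=\eta$, for $0\le s\le t\le 1$ the increment of $\cS\phi-\cS\psi$ is
\[
\cS\phi(t)-\cS\psi(t)-\big(\cS\phi(s)-\cS\psi(s)\big)=\int_s^t b(L_r+\phi_r)-b(L_r+\psi_r)\,dr,
\]
which is precisely the object bounded in \eqref{L32bound}, with $g=\phi$, $h=\psi$ and the driving process being $L$ itself (no extra shift). I would first verify the hypotheses of \cref{L:32} with $p=2$, $\gamma=1/2$, the given $\tau$, $\eps_0=\tau$, $C_g=C_\phi$, $C_h=C_\psi$: assumptions \ref{A:1}--\ref{A:2} hold throughout the Appendix; \eqref{a:g} is the first bound of \eqref{phipsicond}, and \eqref{a:h} follows from the second one by taking expectations (turning the conditional estimate into the required $L_1$-estimate); and the two conditions in \eqref{taugamma}, namely $\tfrac{\beta-2}{\alpha}+\tau>0$ and $\tfrac12+\tfrac{\beta-1}{\alpha}>0$, are exactly the standing assumptions $\tau>(2-\beta)/\alpha$ and $\beta>1-\alpha/2$. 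Boundedness of $b$ and of $\phi,\psi$ makes $\cS\phi,\cS\psi$ well defined. One small caveat: \eqref{a:g}--\eqref{a:h} are stated in \cref{L:32} on $[0,1]$ while \eqref{phipsicond} is only assumed on $[0,T_0]$, but the proof of \cref{L:32} uses these hypotheses only at times inside its base interval, so it applies verbatim to any subinterval $[s,t]\subseteq[0,T_0]$.

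Next, fix $T\in(0,T_0]$ and $0\le s<t\le T$ and apply \cref{L:32} on $[s,t]$. Each of the three exponents on the right of \eqref{L32bound} --- $1+\tfrac{(\beta-1)\wedge0}{\alpha}$, $\tfrac32+\tfrac{(\beta-1)\wedge0}{\alpha}$, and $1+\tfrac{\beta-2}{\alpha}+\tau$ --- is at least $\tfrac12+\eps$ with $\eps:=\tfrac12+\tfrac{(\beta-1)\wedge0}{\alpha}$, which is positive by $\beta>1-\alpha/2$ and depends only on $\alpha,\beta$ (the first exponent equals $\tfrac12+\eps$; the second is larger; the third exceeds $\tfrac12$ since $\tfrac{\beta-2}{\alpha}+\tau>0$, and a short computation using $\tau>(2-\beta)/\alpha$ shows it even dominates $\tfrac12+\eps$). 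Since $t-s\le T\le1$, we may extract a factor $(t-s)^{1/2}T^{\eps}$ from every term. Using \cref{lem:useful-lemma}(ii) to replace $\dnew{\phi-\psi}{1/2}{2}{[s,t]}$ by $2[\phi-\psi]_{\scC^{1/2}_2,[s,t]}$, and bounding all the (semi)norms over $[s,t]$ by the corresponding Appendix norm $\|\phi-\psi\|_{\scC^{1/2}_2,[0,T]}$ (which majorises both the supremum of $L_2$-norms and the $C^{1/2}$-seminorm), I obtain
\[
\big\|\cS\phi(t)-\cS\psi(t)-\big(\cS\phi(s)-\cS\psi(s)\big)\big\|_{L_2(\Omega)}\le N(1+C_\phi)\,T^{\eps}(t-s)^{1/2}\,\|\phi-\psi\|_{\scC^{1/2}_2,[0,T]}.
\]
Dividing by $(t-s)^{1/2}$ and taking $\sup_{s,t\in[0,T]}$ controls the $C^{1/2}$-seminorm part of $\|\cS\phi-\cS\psi\|_{\scC^{1/2}_2,[0,T]}$.

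Finally, the supremum-of-$L_2$-norms part is handled by the same estimate at $s=0$: since $\cS\phi(0)=\cS\psi(0)$, for each $t\in[0,T]$ one has $\|\cS\phi(t)-\cS\psi(t)\|_{L_2(\Omega)}\le t^{1/2}$ times the seminorm just bounded, hence at most $T^{1/2}N(1+C_\phi)T^{\eps}\|\phi-\psi\|_{\scC^{1/2}_2,[0,T]}$. Adding the two contributions and absorbing $T^{1/2}\le1$ yields \eqref{contbound}.

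I do not expect a genuine obstacle here: the analytic heart of the estimate is already packed into \cref{L:32} (which itself rests on the shifted stochastic sewing lemma \cref{lem:shiftedmodifiedSSL}), so the task is organisational --- matching the hypotheses (including the restricted-interval point above), checking that all the exponents lie strictly above $1/2$ so that the contraction gain is a genuine positive power of $T$, and upgrading an increment estimate to a bound on the full $\scC^{1/2}_2$-norm via $\cS\phi(0)=\cS\psi(0)$. Should one prefer not to cite \cref{L:32} as a black box, its proof can simply be rerun with the choices $p=2$, $g=\phi$, $h=\psi$.
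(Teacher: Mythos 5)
Your proposal is correct and follows essentially the same route as the paper: apply \cref{L:32} with $g=\phi$, $h=\psi$, $p=2$, $\gamma=1/2$ (using \eqref{phipsicond} to verify \eqref{a:g}--\eqref{a:h} and \eqref{taugamma}), note that every exponent in \eqref{L32bound} is at least $\tfrac12+\eps$ so a factor $T^\eps(t-s)^{1/2}$ can be extracted, control $\dnew{\phi-\psi}{1/2}{2}{[s,t]}$ via \cref{lem:useful-lemma}(ii), and recover the supremum part from the increment bound at $s=0$ since $\cS\phi(0)=\cS\psi(0)$. Your remark about restricting the hypotheses of \cref{L:32} to $[0,T_0]$ is a fair point that the paper passes over silently, but it does not change the argument.
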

\begin{proof}
	Fix $T\in(0,T_0]$.
	We will apply \cref{L:32} with $g=\phi$, $h=\psi$, $\gamma=1/2$, $\theta=\beta$, $f=b$, and $\mathcal{G}=\{\emptyset,\Omega\}$. We see that all the conditions of the Lemma are satisfied and it follows from \eqref{L32bound} that for any $S\le s\le t\le S+T$
	\begin{align}\label{wpstep1}
		\|(\cS\phi(t)-\cS\psi(t))-(\cS\phi(s)-\cS\psi(s))\|_{L_p(\Omega)}&
		=\Bigl\|\int_s^t (b(L_s+\phi_s)-b(L_s+\psi_s))\,ds\Bigr\|_{L_p(\Omega)}\nn\\
		&\le N (1+C_\phi) |t-s|^{\frac12+\eps}	\db{\phi-\psi}_{\scC^{\frac12}_p,[0,T]},
	\end{align}
	where $\eps:=\frac12+\frac{(\beta-1)\wedge0}{\alpha}>0$ thanks to \eqref{eq:main-exponent}.
	Therefore, by taking in the above inequality $s=S$, we get
	\begin{equation*}
		\|\cS\phi(t)-\cS\psi(t)\|_{L_p(\Omega)}\le N T^{\eps} (1+C_\phi) \db{\phi-\psi}_{\scC^{\frac12}_p,[0,T]}+		\|\cS\phi(S)-\cS\psi(S)\|_{L_p(\Omega)}.
	\end{equation*}
	Combining this with \eqref{wpstep1}, we get the desired bound \eqref{contbound}.
\end{proof}

\begin{proof}[Proof of \cref{L:SEU}: strong uniqueness]
	Let $X$, $\wt X$ be two strong solutions to \eqref{eq:main-SDE} with the initial condition $\eta$ adapted to the same filtration $\F$; here $\eta$ is $\F_0$--measurable vector in $\R^d$. Define $\phi:=X-L$, $\wt \phi:=\wt X -L$. Since $X,\wt X$ solve \eqref{eq:main-SDE}, we obviously have $\cS\phi=\phi$, $\cS\wt \phi=\wt\phi$.
	
	It follows from \eqref{eq:main-exponent} that one can choose $\delta>0$ small enough so that 
	\begin{equation}\label{betacondsmall}
		\beta>\max(2-2\alpha+\delta\alpha, 1-\alpha/2+\delta\alpha).
	\end{equation}
	We apply \cref{L:cboundSEU} with $S=0$ to the functions $\phi$, $\wt \phi$. By \cref{lem:useful-lemma}(iii), condition \eqref{phipsicond} holds with $T_0=1$,  $\tau=1+(\frac{\beta\wedge\alpha\wedge1}{\alpha})-\delta$ and $C_\phi=C_\psi=N$. It is easy to see that with such choice of $\tau$, one has $\tau>(2-\beta)/\alpha$, and thus all the conditions of \cref{L:cboundSEU} hold. Therefore, \eqref{contbound} implies
	for any $T\in(0,1]$
	\begin{equation}\label{contraction2sol}
		\|\phi-\wt \phi\|_{{\scC^{\frac12}_2,[0,T]}}=
		\|\cS\phi-\cS\wt \phi\|_{{\scC^{\frac12}_2,[0,T]}}\le N T^\eps \db{\phi-\wt\phi}_{\scC^{\frac12}_2,[0,T]}.
	\end{equation}
	Note that $\db{\phi-\wt\phi}_{\scC^{\frac12}_2,[0,T]}<\infty$ regardless of the moment conditions imposed on $\eta$ (we did not impose any). Indeed,
	\begin{equation*}
		\db{\phi-\wt\phi}_{\scC^{\frac12}_2,[0,T]}=\Bigl\|\int_0^\cdot (b(L_s+\phi_s)-b(L_s+\wt\phi_s))\,ds\Bigr\|_{\scC^{\frac12}_2,[0,T]}\le 2\|b\|_{\C^0}.
	\end{equation*}
	Since $N$ in \eqref{contraction2sol} does not depend on $T$, by taking in \eqref{contraction2sol} $T$ small enough such that $NT^\eps\le1/2$, we have $\phi=\wt \phi$  on $[0,T]$, which implies $X=\wt X$ on $[0,T]$. Repeating this procedure $\lceil 1/T\rceil$ times by starting at time $iT$ instead of $0$, $i=1,\ldots,\lceil 1/T\rceil-1$,   (note that we have not imposed any moment conditions on $\eta$, so there is no problem that, e.g., $X_T$ does not have a second moment) we get strong uniqueness on the interval $[0,1]$.
\end{proof}

To establish strong existence, we consider a sequence $\phi^{(0)}(t):=\eta$, $0\le t\le 1$, $\phi^{(n)}:=\cS^n \phi^{(0)}$, $n\in\N$. Clearly, for any $n\in\Z_+$, $t\in[0,1]$
\begin{equation}\label{phistep}
	\phi^{(n+1)}(t)=\eta+\int_0^t b(L_s+\phi^{(n)}(s))\,ds.
\end{equation}
\begin{lemma}\label{L:A2}
	For any $\eps>0$, there exist constants  $N=N(\alpha,\beta,\eps,\|b\|_{\C^\beta})>0$, 
	$T_0=T_0(\alpha, \beta,\eps,\|b\|_{\C^\beta})\in(0,1]$ such that for any $n\in\Z_+$, $S\le s\le t\le S+ T_0$ 
	\begin{equation}\label{statementA2} 
		\E^s|\phi^{(n)}_t-\E^s\phi^{(n)}_t|\le N |t-s|^{1+\frac {\beta\wedge1\wedge\alpha}\alpha-\eps}.
	\end{equation}
\end{lemma}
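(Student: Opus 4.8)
The plan is to mimic the proof of \cref{lem:useful-lemma}(iii), inserting an additional induction on the Picard index $n$ so that all constants remain uniform in $n$. Since $b\in\cC^\beta$ satisfies $|b(x)-b(y)|\le N(\|b\|_{\cC^\beta})(|x-y|^{\beta\wedge1}\wedge1)$ for all $x,y\in\R^d$, only $\beta\wedge1$ will enter the estimates. First I fix $\eps>0$ small and set up bootstrap exponents: $\theta_0:=0$ and $\theta_{j+1}:=1+\big(\tfrac{\beta\wedge1}{\alpha}\wedge\tfrac{\nalpha}{\alpha}\wedge(\beta\wedge1)\theta_j\big)-\eps$. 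Exactly as in \cref{lem:useful-lemma}(iii)---using $\alpha>1-\beta$, which is a consequence of \eqref{eq:main-exponent}---this recursion reaches $\theta^\ast:=1+\big(\tfrac{\beta\wedge1}{\alpha}\wedge\tfrac{\nalpha}{\alpha}\big)-\eps=1+\tfrac{\beta\wedge1\wedge\nalpha}{\alpha}-\eps$ after finitely many, say $J=J(\alpha,\beta,\nalpha,\eps)$, steps.

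The core claim, proved by induction on $j\in\{0,1,\dots,J\}$, is that there is a constant $N_j$ (depending on $\alpha,\beta,\nalpha,\eps,\|b\|_{\cC^\beta}$ but neither on $n$ nor on $\eta$) such that
\begin{equation*}
\E^s|\phi^{(n)}_t-\E^s\phi^{(n)}_t|\le N_j|t-s|^{\theta_j}\qquad\text{a.s., for all }n\in\Z_+,\ 0\le s\le t\le1 .
\end{equation*}
For $j=0$ this is immediate: if $n\ge1$ the $\F_0$-measurable summand $\eta$ cancels, so $\phi^{(n)}_t-\E^s\phi^{(n)}_t=\int_0^t b(L_r+\phi^{(n-1)}_r)\,dr-\E^s\int_0^t b(L_r+\phi^{(n-1)}_r)\,dr$ has absolute value at most $2\|b\|_{\cC^0}$, while for $n=0$ it vanishes since $\phi^{(0)}\equiv\eta$. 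For the step from $j$ to $j+1$: when $n=0$ there is nothing to prove; when $n\ge1$ I apply \eqref{YZcond} with $\cG=\F_s$, $Y=\phi^{(n)}_t$ and $Z=\phi^{(n)}_s+\int_s^t b(L_s+\E^s\phi^{(n-1)}_r)\,dr$ (which is $\F_s$-measurable), precisely as in \cref{lem:useful-lemma}(iii) with $\phi^{(n-1)}$ in the role of $\varphi$. Then the Hölder bound on $b$ (splitting its argument into an $L$-increment and a $\phi^{(n-1)}$-increment), conditional Jensen's inequality, the independence of $L_r-L_s$ from $\F_s$ together with \eqref{eq:basic moments} for the former, and the induction hypothesis at level $\theta_j$ applied to $\phi^{(n-1)}$ for the latter, yield
\begin{equation*}
\E^s|\phi^{(n)}_t-\E^s\phi^{(n)}_t|\le N\int_s^t\big(|r-s|^{\frac{\beta\wedge1}{\alpha}\wedge\frac{\nalpha}{\alpha}-\eps}+N_j^{\beta\wedge1}|r-s|^{(\beta\wedge1)\theta_j}\big)\,dr\le N\big(1+N_j^{\beta\wedge1}\big)|t-s|^{\theta_{j+1}},
\end{equation*}
the last inequality using $|t-s|\le1$. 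Hence $N_{j+1}:=N(1+N_j^{\beta\wedge1})$ works; observe that the induction on $n$ is in fact subsumed, since the bound for $\phi^{(n)}$ only calls on the bound for $\phi^{(n-1)}$ at the previous regularity level and the case $n=0$ is trivial. Taking $j=J$ and $N:=N_J$ gives \eqref{statementA2} with $T_0=1$ (and hence with any $T_0\in(0,1]$).

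I do not anticipate a genuine obstacle: this is the conditional $L_1$ computation of \cref{lem:useful-lemma}(iii) together with a bookkeeping of constants through the Picard scheme. The only point requiring care is coordinating the two inductions---one must run the induction in the regularity index $j$ on the outside, so that at each stage the level-$\theta_j$ bound is already available for \emph{every} iterate $\phi^{(m)}$, $m\ge0$, before passing to level $\theta_{j+1}$ (and to $\phi^{(m+1)}$). The termination of the exponent recursion at $\theta^\ast$ is verbatim as in \cref{lem:useful-lemma}(iii) and again relies only on \eqref{eq:main-exponent}.
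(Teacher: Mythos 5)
Your proof is correct, and the estimates you use are exactly the paper's (the bound \eqref{YZcond} with $Z=\phi^{(n)}_s+\int_s^t b(L_s+\E^s\phi^{(n-1)}_r)\,dr$, the H\"older splitting of $b$, conditional Jensen, and \eqref{eq:basic moments} via the independence of $L_r-L_s$ from $\F_s$), but the induction is organized differently. The paper runs a single induction over the Picard index $n$ \emph{at the final exponent} $\theta=1+\frac{\beta\wedge1\wedge\nalpha}{\alpha}-\eps$; since the constant from step $n$ feeds back into step $n+1$ through the term $(\E^s|\phi^{(n)}_r-\E^s\phi^{(n)}_r|)^{\beta\wedge1}$, uniformity in $n$ is enforced by choosing $T_0$ small enough that $(8\|b\|_{\C^\beta})^{\beta\wedge1}T_0^{1+(\beta\wedge1)\theta-\theta}\le1$, which is why a possibly small $T_0$ appears in the statement. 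You instead put the exponent bootstrap $\theta_0=0$, $\theta_{j+1}=1+\big(\tfrac{\beta\wedge1}{\alpha}\wedge\tfrac{\nalpha}{\alpha}\wedge(\beta\wedge1)\theta_j\big)-\eps$ (the same recursion as in \cref{lem:useful-lemma}(iii), terminating in finitely many steps thanks to \eqref{eq:main-exponent}) on the outside, proving each level-$\theta_j$ bound simultaneously for all iterates with a constant $N_j$ independent of $n$; since the step from level $j$ to level $j+1$ only calls on $\phi^{(n-1)}$ at level $j$, the constant compounds only over the finitely many bootstrap levels $J$, never along the Picard iteration, so no smallness of $T_0$ is needed and you obtain the (slightly stronger) conclusion with $T_0=1$. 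Both arguments are sound; the paper's buys an explicit constant and a shorter single induction (and a small $T_0$ is harmless for its use in the existence proof), while yours buys uniformity on the whole interval $[0,1]$ at the cost of tracking two nested inductions.
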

\begin{proof}
	Fix $\eps>0$. Denote $m:=1+\frac {\beta\wedge1\wedge\alpha}\alpha-\eps$. First we note that 	
	\begin{equation}\label{thetacond}
		1+(\beta\wedge1)m-m>0.
	\end{equation}	
	Indeed, if $\beta\ge1$ this is obvious; otherwise since $\beta>1-\alpha$ we have
	\begin{equation*}
		m< 1+\frac{\beta}{\alpha}<1+\frac{\beta}{1-\beta}=\frac{1}{1-\beta},
	\end{equation*}
	which implies \eqref{thetacond}. Fix now $T_0\in(0,1]$ small enough such that
	\begin{equation}\label{Tcond}
		(8 \|b\|_{\C^\beta})^{\beta\wedge1} T_0^{1+(\beta\wedge1)m-m}\le 1.
	\end{equation}
	
	We will prove by induction over $n$ that \eqref{statementA2} holds with $8 M^*\|b\|_{\C^\beta}$ in place of $N$, where
	$$
	M^*:=M(\beta\wedge(\alpha-\eps\alpha/2),\eps/2)+1;
	$$
	recall the definition of the function $M=M(p,\eps)$ in \ref{A:3}.

	The case $n=0$ is obvious. Assume that the lemma holds for some $n\in\Z_+$, and let us prove it for $n+1$. Arguing similar to the proof of \cref{lem:useful-lemma} and using \eqref{YZcond}, we derive for any $S\le s\le t\le S+T_0$ 
	\begin{align}
		\E^s|\phi^{(n+1)}_t-\E^s\phi^{(n+1)}_t|&\le 2 \E^s\Bigl|\phi^{(n+1)}_t-\phi^{(n+1)}_s-\int_s^t b(L_s+\E^s\phi^{(n)}_r)\,dr\Bigr|\nn\\
		&=2\E^s\Bigl|\int_s^t (b(L_r+\phi^{(n)}_r)-b(L_s+\E^s\phi^{(n)}_r))\,dr\Bigr|\nn\\
		&\le 4 \|b\|_{\C^\beta}\int_s^t (\E^s[|L_r-L_s|^{\beta\wedge1}\wedge1]+(\E^s|\phi^{(n)}_r-\E^s\phi^{(n)}_r|)^{\beta\wedge1})\,dr\nn\\
		&\le 4 \|b\|_{\C^\beta}\Bigl(M^*|t-s|^{1+(\frac {\beta\wedge1\wedge\alpha}\alpha-\eps)}
		+ (8 \|b\|_{\C^\beta} M^*)^{\beta\wedge1}|t-s|^{1+(\beta\wedge1)m}\Bigr)\label{nontrivial}\\
		&\le 4 \|b\|_{\C^\beta}M^*\Bigl(|t-s|^{m}
		+ (8 \|b\|_{\C^\beta})^{\beta\wedge1}|t-s|^{m} T_0^{1+(\beta\wedge1)m-m}\Bigr)\label{semitrivial}\\
		&\le 8 \|b\|_{\C^\beta}M^*|t-s|^{m}\nn,
	\end{align}
	where in \eqref{nontrivial} we used the induction step and assumption \ref{A:3}, in \eqref{semitrivial} we used \eqref{thetacond}, and in the last inequality we used \eqref{Tcond}. This proves \eqref{statementA2} for $n+1$, and thus completes the proof.
\end{proof}

Now we can obtain the existence part of \cref{L:SEU}.
\begin{proof}[Proof of \cref{L:SEU}: strong existence and convergence of the Picard iterations] Fix $p\ge2$ and $\delta>0$ small enough such that \eqref{betacondsmall} holds.
	
	\textbf{Step 1}. 	 For any $n\in\Z_+$ we apply \cref{L:cboundSEU} to $\phi^{(n)}$ and $\phi^{(n+1)}$ with $\tau=1+(\frac{\beta\wedge\alpha\wedge1}{\alpha})-\delta$; as in the proof of uniqueness part it is clear that $\tau>(2-\beta)/\alpha$. We see that by \cref{L:A2} condition \eqref{phipsicond} holds for some $T_0$ not depending on $n$, $C_\phi=C_\psi=N$, where $N$ also does not depend on $n$. Thus, all the conditions of  \cref{L:cboundSEU} are met and we have for any 
	$n\in\Z_+$,  $T\in(0,T_0]$, $S\in[0,1-T]$,
	\begin{align*}
		\db{\phi^{(n+2)}-\phi^{(n+1)}}_{\scC^{\frac12}_p,[S,T]}&=	\|\cS\phi^{(n+1)}-\cS\phi^{(n)}\|_{{\scC^{\frac12}_p,[S,T]}}\\
		&\le N T^\eps \db{\phi^{(n+1)}-\phi^{(n)}}_{\scC^{\frac12}_p,[S,T]}+\|\phi^{(n+2)}(S)-\phi^{(n+1)}(S)\|_{L_p(\Omega)},
	\end{align*}
	where $N=N(\alpha, \beta,p,d,\|b\|_{\C^\beta})$, $\eps=\eps(\alpha,\beta)$. 
	
	Pick now $T\in(0,T_0]$ small enough so that $ N T^\eps\le 1/2$.
	We stress that the choice of $T$ does not depend on $n$.	
	Let $M:=\lceil \frac1T\rceil$ and let $0=S_0\le S_1\le\hdots\le  S_M=1$ be a partition of $[0,1]$ such that $S_{m+1}-S_m\le T$, $m=1,\hdots,M$.  Then for any $m=0,\hdots,M-1$ we get 
	\begin{align}\label{mainresstep1}
		\db{\phi^{(n+2)}-\phi^{(n+1)}}_{\scC^{\frac12}_p,[S_m,S_{m+1}]}&\le \frac12 \db{\phi^{(n+1)}-\phi^{(n)}}_{\scC^{\frac12}_p,[S_m,S_{m+1}]}\nn\\
		&\phantom{\le}+\|\phi^{(n+2)}(S_m)-\phi^{(n+1)}(S_m)\|_{L_p(\Omega)}.
	\end{align}
	
	\textbf{Step 2}. We claim that for any $m=0,\hdots,M-1$,
	\begin{equation}\label{claimP}
		\db{\phi^{(n+1)}-\phi^{(n)}}_{\scC^{\frac12}_p,[S_m,S_{m+1}]}\le (m+1)n^m2^{-n } \db{\phi^{(1)}-\phi^{(0)}}_{\scC^{\frac12}_p,[0,1]},\quad n\in\Z_+.
	\end{equation}
	We show \eqref{claimP} by induction over $m$. If $m=0$ then $S_m=0$ and  $\phi^{(n+1)}(0)=\phi^{(n)}(0)=\eta$ for any $n\in\Z_+$. Hence, we immediately get from \eqref{mainresstep1}
	\begin{equation*}
		\db{\phi^{(n+1)}-\phi^{(n)}}_{\scC^{\frac12}_p,[S_0,S_{1}]}\le 2^{-n} \db{\phi^{(1)}-\phi^{(0)}}_{\scC^{\frac12}_p,[S_0,S_{1}]},\quad n\in\Z_+,
	\end{equation*}
	which yields \eqref{claimP}.
	
	Assume now that \eqref{claimP} holds for  some $m-1\in\{0,1,\hdots,M-2\}$. We will show that \eqref{claimP} holds for $m$. We have from the induction assumption for any $n\in\Z_+$
	\begin{align*}
		\|\phi^{(n+1)}(S_m)-\phi^{(n)}(S_m)\|_{L_p(\Omega)}&\le
		\|\phi^{(n+1)}(S_m)-\phi^{(n)}(S_m)\|_{\scC^{\frac12}_p,[S_{m-1},S_{m}]}\\
		&\le mn^{m-1}2^{-n}\db{\phi^{(1)}-\phi^{(0)}}_{\scC^{\frac12}_p,[0,1]}.
	\end{align*}
	Therefore,  \eqref{mainresstep1} yields  for any $n\in\N$
	\begin{equation*}
		\db{\phi^{(n+1)}-\phi^{(n)}}_{\scC^{\frac12}_p,[S_m,S_{m+1}]}\le \frac12 \db{\phi^{(n)}-\phi^{(n-1)}}_{\scC^{\frac12}_p,[S_m,S_{m+1}]}+mn^{m-1}2^{-n}\db{\phi^{(1)}-\phi^{(0)}}_{\scC^{\frac12}_p,[0,1]}.
	\end{equation*}
	Iterating this inequality over $n$ ($m$ is fixed!)
	\begin{align}\label{verytedious}
		\db{\phi^{(n+1)}-\phi^{(n)}}_{\scC^{\frac12}_p,[S_m,S_{m+1}]}
		&\le  2^{-n} \db{\phi^{(1)}-\phi^{(0)}}_{\scC^{\frac12}_p,[0,1]}\nn\\
		&\phantom{\le}+m \sum_{i=1}^n 2^{-n+i}2^{-i}i^{m-1} \db{\phi^{(1)}-\phi^{(0)}}_{\scC^{\frac12}_p,[0,1]}\nn\\
		&=2^{-n}\db{\phi^{(1)}-\phi^{(0)}}_{\scC^{\frac12}_p,[0,1]} (1+ m \sum_{i=1}^n i^{m-1}).
	\end{align}
	Clearly,
	$$
	1+ m \sum_{i=1}^n i^{m-1}\le 1 +mn^m\le (m+1)n^m,
	$$ 
	which together with \eqref{verytedious} gives 
	\begin{equation*}
		\db{\phi^{(n+1)}-\phi^{(n)}}_{\scC^{\frac12}_p,[S_m,S_{m+1}]}\le 
		2^{-n}\db{\phi^{(1)}-\phi^{(0)}}_{\scC^{\frac12}_p,[0,1]} (m+1)n^m,
	\end{equation*}
	which is the desired claim \eqref{claimP}.
	
	\textbf{Step 3}. Note that by definition of $\phi^{(1)}$ and $\phi^{(0)}$
	\begin{equation*}
		\db{\phi^{(1)}-\phi^{(0)}}_{\scC^{\frac12}_p,[0,1]}\le \|b\|_{\C^0}.	
	\end{equation*}
	Therefore, using the  bound $(m+1)n^m2^{-n}\le N 2^{-\frac{n}2}$ whenever $m\le M$ for some $N=N(M)=N(\alpha, \beta,p,d,\|b\|_{\C^\beta})$, we rewrite  \eqref{claimP} as 
	\begin{equation*}
		\db{\phi^{(n+1)}-\phi^{(n)}}_{\scC^{\frac12}_p,[S_m,S_{m+1}]}\le N2^{-\frac{n}2 },\quad n\in\Z_+,
	\end{equation*}
	which yields
	\begin{equation*}
		\db{\phi^{(n+1)}-\phi^{(n)}}_{\scC^{\frac12}_p,[0,1]}\le N2^{-\frac{n}2 },\quad n\in\Z_+,
	\end{equation*}
	and eventually 
	\begin{equation}\label{lastbound}
		\db{\phi^{(k)}-\phi^{(n)}}_{\scC^{\frac12}_p,[0,1]}\le N2^{-\frac{n\wedge k}2 },\quad n,k\in\Z_+.
	\end{equation}

	\textbf{Step 4}. 	It follows by \eqref{lastbound}
	\begin{equation}\label{lb1}
		\sup_{t\in[0,1]}\|\phi^{(k)}_t-\phi^{(n)}_t\|_{L_p(\Omega)}\le \db{\phi^{(k)}-\phi^{(n)}}_{\scC^{\frac12}_p,[0,1]}\le N2^{-\frac{n\wedge k}2 },\quad n,k\in\Z_+.
	\end{equation}	
	
	Thus, for any $t\in[0,1]$ the sequence $(\phi^{(n)}_t-\eta)_{n\in\Z_+}
	=(\phi^{(n)}_t-\phi^{(0)}_t)_{n\in\Z_+}$ converges in $L_p(\Omega)$ as $n\to\infty$. Denote its limit by $\psi_t$. We have from \eqref{lastbound} by Fatou's lemma
	\begin{equation}\label{almostendequ}
		\sup_{t\in[0,1]}\|\psi_t-(\phi^{(n)}_t-\eta)\|_{L_p(\Omega)}\le
		\|\psi-(\phi^{(n)}-\eta)\|_{\scC^{\frac12}_p,[0,1]}\le N2^{-\frac{n}2}.
	\end{equation}
	
	Put $X_t:=\eta+\psi_t+L_t$. We claim that a version of $X$ is a strong solution to \eqref{eq:main-SDE} on $[0,1]$. Indeed, for any $t\in[0,1]$ random vector $X_t$ is clearly $\F_t$-measurable as a limit of $\F_t$-measurable random vectors; further, recalling \eqref{phistep} we have 
	\begin{align*}
		\Bigl\|X_t-\eta-\int_0^t b(X_s)\,ds-L_t\Bigr\|_{L_2(\Omega)}&=\Bigl\|\psi_t - \int_0^t b(L_s+\eta+\psi_s)\,ds\Bigr\|_{L_2(\Omega)}\\
		&\le \|\psi_t-(\phi_t^{(n+1)}-\eta)\|_{L_2(\Omega)}\\
		&\qquad+
		\Bigl\|\int_0^t (b(L_s+\eta+\psi_s)-b(L_s+\phi_s^{(n)}))\,ds\Bigr\|_{L_2(\Omega)}\\
		&\le \|\psi_t-(\phi_t^{(n+1)}-\eta)\|_{L_2(\Omega)}+\int_0^t \|\psi_s-(\phi_s^{(n)}-\eta)\|_{L_2(\Omega)}^{\beta\wedge1}\,ds.
	\end{align*}
	By \eqref{almostendequ}, the right-hand side of the above inequality tends to $0$ as $n\to\infty$. Thus, for any $t\in[0,1]$ we have $X_t=\eta+\int_0^t b(X_s)\,ds+L_t$ a.s.
	In particular, $\wt X_t:=\eta+\int_0^t b(X_s)\,ds+L_t$ coincides with $X$ for a.e. $\omega,t$ and therefore satisfies 
	$$\P\Bigl( \wt X_t=\eta+\int_0^tb(\wt X_s)\,ds+L_t \text{ for all }t\in[0,1]\Bigr)=1.
	$$
	Hence $X$ is a strong solution to SDE \eqref{eq:main-SDE} on $[0,1]$.
	
	\textbf{Step 5}. We see	from the definition of the Picard approximation $Y^{(n)}$ and $\phi^{(n)}$ that
	$Y^{(n)}=\phi^{(n)}+L$. By definition of $\wt X$, we have $X_t=\wt X_t$ a.s. for any fixed $t\in[0,1]$.  Therefore, we can rewrite \eqref{almostendequ} as
	\begin{equation*}
		\|\wt X-Y^{(n)}\|_{\scC^{\frac12}_p,[0,1]}=\|X-Y^{(n)}\|_{\scC^{\frac12}_p,[0,1]}\le N2^{-\frac{n}2}.
	\end{equation*} 
	Since the process $\wt X-Y^{(n)}$ is continuous for almost all $\omega$, the Kolmogorov continuity theorem implies  
	\begin{equation*}
		\| \| \wt X-Y^{(n)}\|_{\C^0([0,1])}\|_{L_p(\Omega)}\le N2^{-\frac{n}2}.
	\end{equation*} 
	which yields the desired convergence of $\| \wt X-Y^{(n)}\|_{\C^0([0,1])}$ to $0$ a.s. and in $L_p(\Omega)$.
\end{proof}

\section{Proofs of the auxiliary statements of the article}

\begin{proof}[Proof of \cref{P:RePhibound}]
	Recall that we denoted the generating triplet of $L$ by $(a,Q,\nu)$. 
	By \cite[Theorem 1.3]{SSW12} with $f(s)=s^{\alpha}$, assumption \eqref{phibound} implies that there exists  $t_0=t_0(\alpha)>0$ such that the gradient bound \eqref{eq:K1newA} holds for small enough $t\in(0,t_0]$. If $t\in(t_0,1]$, then 
	\begin{equation*}
		\|\nabla \cP_t f\|_{\C^0}\le \|\cP_{t-t_0}\nabla \cP_{t_0} f\|_{\C^0}\le N t_0^{-1/\alpha}\|f\|_{\C^0}\le
		\widetilde N t^{-1/\alpha}\|f\|_{\C^0},
	\end{equation*}
	where $\widetilde N=Nt_0^{-1/\alpha}$, which proves
	\eqref{eq:K1newA}.
	
	Now we move  on to \ref{A:2}. Fix $\eps>0$.   Note that by \cite[Lemma~A.2]{KS2019}, \eqref{eq:K1newA} implies that for any $\eps>0$
	\begin{equation*}
		\int_{|y|\le 1}|y|^{\alpha+\eps} \nu(dy)<\infty.
	\end{equation*}
	Therefore, by \cite[Lemma~A.3(i)]{KS2019}, $Q=0$. 
	
	If $\alpha\in[1,2)$, then   \cite[Theorem~3.2(iii)]{KS2019} implies for any $f\in\C^{\alpha+\eps}$ vanishing at infinity 
	\begin{equation*}
		\L f (x)= \langle a,\nabla f(x)\rangle +\int_{|y|\ge1} (f(x+y)-f(x))\,\nu(dy)+\int_{|y|\le1}\int_0^1 \langle \nabla f(x+\lambda y)-\nabla f(x),y\rangle\,d\lambda\nu(dy).
	\end{equation*}
	Hence 
	\begin{equation*}
		\|\L f\|_{\C^0}\le |a|\| f\|_{\C^1}+2\|f\|_{\C^0} \nu (\{|y|\ge1\})+\| f\|_{\C^{\alpha+\eps}} \int_{|y|\le1}|y|^{\alpha+\eps}\nu(dy)\le N\|f\|_{\C^{\alpha+\eps}}.
	\end{equation*}
	Very similarly, $\|\L f\|_{\C^1}\le  N\|f\|_{\C^{\alpha+1+\eps}}$.
	
	If $\alpha\in(0,1)$, then by above $ \int_{|y|\le 1}|y| \nu(dy)<\infty$. Consider now the process $\wt L_t:=L_t+\kappa t$, where $\kappa=-a+\int_{|y|\le1}y\nu(dy)$. Let $\wt \L$ be its generator.  It is immediate to see that 
	$$
	\E e^{i \langle \lambda,\wt L_t\rangle}=\exp\Bigl(t \int_{\R^d} (e^{i\langle \lambda,y\rangle}-1)\nu(dy)\Bigr),\quad \lambda\in\R^d,\,t\ge0.
	$$
	Therefore all the conditions of  \cite[Theorem~3.2(ii)]{KS2019} are satisfied and, thus, for any $f\in\C^{\alpha+\eps}$ vanishing at infinity 
	\begin{equation*}
		\wt \L f (x)= \int_{\R^d} (f(x+y)-f(y))\nu(dy).
	\end{equation*}
	Hence $	\|\wt \L f\|_{\C^0}\le \| f\|_{\C^{\alpha+\eps}} \int_{\R^d}(|y|^{\alpha+\eps}\wedge2)\nu(dy)\le N\|f\|_{\C^{\alpha+\eps}}$. The bound on $\|\wt \L f\|_{\C^1}$ is established by the same argument.
\end{proof}

\begin{proof}[Proofs that \cref{E:NDAS,E:SDAS,E:DCAS,E:Tempered,E:Truncated,E:relativistic,E:BM,E:sum,E:alphasttype} satisfy \ref{A:1}--\ref{A:3}]
	
	We begin with \cref{E:NDAS}. In this case, $\Re \Phi(\lambda)=c_\alpha \int_{\mathbb{S}}|\langle \lambda,\xi\rangle|^\alpha\mu(d\xi)$, for some $c_\alpha>0$ and the upper bound in \eqref{phibound} is immediate. The lower bound follows from the argument presented in \cite[p. 424--425 (after Hypothesis 2)]{Priola1}. By \cref{P:RePhibound}, this implies that $L$ (or its shifted version) satisfies \ref{A:1} and \ref{A:2}.
	
	It is easy to see that $\int_{|y|\ge1}|y|^{p}\nu (dy)<\infty$ for any  $p\in(0,\alpha)$, which implies \cite[Theorem~25.3]{Sato-san} that 
	\begin{equation}\label{ndaspm}
		\E |L_1|^p<\infty,\quad p\in(0,\alpha).
	\end{equation}
	If $\alpha\in(1,2)$, then by \cite[formula (14.15)]{Sato-san}, 
	$\Law (L_t-\kappa t)=\Law(t^{1/\alpha}(L_1-\kappa))$ for some $\kappa\in\R^d$. Thus, using \eqref{ndaspm}, we get for $p\in(0,\alpha)$, $t\in(0,1]$
	\begin{equation*}
		\E |L_t|^p\le N\big( t^{p/\alpha} \E |L_1|^{p}+t^p+t^{p/\alpha}\big)\le N t^{p/\alpha},
	\end{equation*}
	since $\alpha>1$. This shows \ref{A:3}.
	
	If $\alpha=1$, then by \cite[formula (14.16)]{Sato-san}, 
	$\Law L_t=\Law(t L_1+\kappa t \log t)$ for some $\kappa\in\R^d$. Thus, by \eqref{ndaspm}, for any $p\in(0,1)$
	$\E |L_t|^p\le N t^{p(1-\eps)}$, which is \ref{A:3}.
	
	Finally, if $\alpha\in(0,1)$, then consider the process $\wt L_t:=L_t+\kappa t$, for $\kappa=\int_{|y|\le1}|y|\nu(dy)$ (this integral is finite by  \cite[Proposition~14.5]{Sato-san}). By above, $\wt L$ satisfies \ref{A:1} and \ref{A:2}. Further, an easy direct calculation (see also \cite[Remark~14.6]{Sato-san}) shows that  $\wt L$ has a symbol  
	$$
	\wt \Phi(\lambda)=\int_{\mathbb{S}}\int_{0 }^\infty(1-e^{i\langle \lambda,r\xi \rangle} )r^{-1-\alpha}dr\mu(d\xi).
	$$
	Hence $\Law (\wt L_t)=t^{1/\alpha}\wt L_t$, and thus thanks to \eqref{ndaspm}, $\E |\wt L_t|^p\le N t^{p/\alpha}$ for $p\in(0,\alpha)$. Thus, $\wt L$ satisfies \ref{A:3}.
	
	\cref{E:SDAS} is a special case of \cref{E:NDAS} with $\mu$ being uniform measure on $\mathbb{S}$. Similarly, \cref{E:DCAS} is a special case of \cref{E:NDAS} with 
	$\mu=\sum_{k=1}^d (\delta_{e_k}+\delta_{-e_k})$, where $(e_k)$ is the standard basis in $\R^d$, see, e.g., \cite[p. 425]{Priola1}.
	
	Now let us move on to \cref{E:alphasttype}. 
	The L\'evy process $L$ now has the symbol
	$$
	\Phi(\lambda)=\int_0^\infty \int_{\mathbb{S}} \big(1-\cos(r \langle \lambda,\xi\rangle \big)
	r^{-1-\alpha}\rho(r)\,\mu(d\xi) dr,
	$$
	where we  used the fact that $\mu$ is symmetric. It is easy to see  that non-degeneracy of $\mu$ implies that $\int_{\mathbb{S}}|\langle\lambda, \xi\rangle|\,\mu(d\xi)>0$ for any $\lambda\in\mathbb{S}$ and thus
	\begin{equation}\label{nondegmu}
		\inf_{\lambda\in\mathbb{S}}\int_{\mathbb{S}}|\langle\lambda, \xi\rangle|\,\mu(d\xi)>0.
	\end{equation}
	Applying \eqref{rhoprop}, we get for any $\lambda\in\R^d$
	\begin{equation}\label{upperboundstabletype}
		\Phi(\lambda)\le N\int_{\mathbb{S}} \int_0^\infty  \big(1-\cos(r \langle \lambda,\xi\rangle \big)
		r^{-1-\alpha}\, dr\mu(d\xi)	\le N |\lambda|^\alpha\int_{\mathbb{S}}|\langle \frac{\lambda}{|\lambda|},\xi\rangle|^\alpha\,\mu(d\xi)\le  N |\lambda|^\alpha.
	\end{equation}
	Similarly, denoting $\overline \lambda:=\frac{\lambda}{|\lambda|}$, we get  for any $\lambda\in\R^d$, with $|\lambda|>1/C$
	\begin{align*}
		\Phi(\lambda)&\ge  \int_{\mathbb{S}}\int_0^C (1-\cos(r \langle \lambda,\xi\rangle )
		r^{-1-\alpha}\, dr\mu(d\xi)\\
		&= |\lambda|^\alpha \int_{\mathbb{S}}|\langle \overline \lambda,\xi\rangle|^\alpha\int_0^{C|\langle \lambda,\xi\rangle|} (1-\cos r)
		r^{-1-\alpha}\, dr\mu(d\xi)\\
		&\ge \frac1{10} |\lambda|^\alpha \int_{\mathbb{S}}|\langle \overline \lambda,\xi\rangle|^\alpha\int_0^{|\langle \overline \lambda,\xi\rangle|} 
		r^{1-\alpha}\, dr\mu(d\xi)\\
		&\ge N |\lambda|^\alpha \int_{\mathbb{S}}|\langle \overline \lambda,\xi\rangle|^2\mu(d\xi)\\
		&\ge N |\lambda|^\alpha \Bigl( \int_{\mathbb{S}}|\langle \overline \lambda,\xi\rangle|\mu(d\xi)\Bigr)^2\\
		&\ge N  |\lambda|^\alpha.
	\end{align*}
	where we used inequality $1-\cos r>\frac{r^2}{10}$ valid for $r\in[0,1]$ and \eqref{nondegmu}. Combining this with \eqref{upperboundstabletype}, we see that symbol $\Phi$ satisfies \eqref{phibound} for large $|\lambda|$ and \eqref{phiboundH3} for all $\lambda\in\R^d$. Thus, $L$ satisfies \ref{A:1}--\ref{A:3}.
	
	Examples \ref{E:Tempered}  and \ref{E:Truncated} are special cases of Example \ref{E:alphasttype}.
	
	In case of \cref{E:relativistic}, we have $N^{-1}|\lambda|^\alpha\le \Phi(\lambda)\le N|\lambda|^\alpha$ for large enough $|\lambda|$. Further, $\Phi(\lambda)\le N\lambda^2$ for small enough $|\lambda|$. Thus, by \cref{P:RePhibound} and \cref{P:ReH3}, assumptions \ref{A:1}--\ref{A:3} holds for the relativistic $\alpha$--stable process with $\nalpha=\alpha$.
	
	\cref{E:BM}, namely when $L$ is a Brownian motion, is obvious.
	
	Finally, let us treat \cref{E:sum}. We begin with part (i). Without loss of generality, assume $\alpha_1\le \alpha_2$. Put $L:=L^{(1)}+L^{(2)}$. Let $\cP$ (respectively $\cP^{(i)}$) be the semigroup associated with $L$  (respectively $L^{(i)}$, $i=1,2$). Similarly let $\L$, $\L^{(i)}$ be the generators of $L$, respectively $L^{(i)}$.  Then for any $f\in\C^0(\R^d)$, $x\in\R^d$ we have by independence of $L^{(1)}$ and $L^{(2)}$
	\begin{equation*}
		\cP_t f(x)=\E \cP_t^{(2)} f(x+L_t^{(1)}). 
	\end{equation*}
	Thus, since $L^{(2)}$ satisfies \ref{A:1},
	\begin{equation*}
		|\nabla	\cP_t f(x)|=|\E \nabla \cP_t^{(2)} f(x+L_t^{(1)})|\le N t^{-1/\alpha_2} \|f\|_{\C^0}. 
	\end{equation*}
	Therefore, $L$ satisfies \ref{A:1} with $\alpha=\alpha_2$. Since $\L=\L^{(1)}+\L^{(2)}$, we have for any $\eps>0$,  $f\in\C^{\alpha_2+\delta+\eps}(\R^d)$ vanishing at infinity, $\delta=0,1$  
	\begin{equation*}
		\|\L f \|_{\C^\delta(\R^d)}\le 
		\|\L^{(1)} f \|_{\C^\delta(\R^d)}+
		\|\L^{(2)} f \|_{\C^\delta(\R^d)}\le M \|f\|_{\C^{\alpha_1+\delta+\eps}}+
		M \|f\|_{\C^{\alpha_2+\delta+\eps}}\le 2 M \|f\|_{\C^{\alpha_2+\delta+\eps}},	\end{equation*}
	and thus  $L$ satisfies \ref{A:2} with $\alpha=\alpha_2$. Finally, to verify \ref{A:3} we fix $p\in(0,\alpha_2)$. Then If $p< \alpha_1$, then there is nothing to prove: 
	$$
	\E [|L_t^{(1)}+L_t^{(2)}|^p\wedge1]\le N
	\E |L_t^{(1)}|^p+N \E|L_t^{(2)}|^p\le N t^{p/\alpha_2-\eps},
	$$
	since $\alpha_2\ge\alpha_1$. Alternatively, if $\alpha_1\le p<\alpha_2$, then
	$$
	\E [|L_t^{(1)}+L_t^{(2)}|^p\wedge1]\le N
	\E |L_t^{(1)}|^{\alpha_1-\eps\alpha_1}+N \E|L_t^{(2)}|^p\le N (t^{1-2\eps}+t^{\frac{p}{\alpha_2}-\eps})\le N t^{\frac{p}{\alpha_2}-2\eps},
	$$
	where in the last inequality we used  that $\frac{p}{\alpha_2}<1$.
	
	Part (ii) of \cref{E:sum} is immediate.
	
	To treat part (iii) of \cref{E:sum}, we recall that by \cite[Theorem~2.4.16]{Iphone} L\'evy--It\^o decomposition holds. Namely, there exists a $d$-dimensional Brownian motion $W$ with covariance matrix $Q$ and an independent Poisson random measure $N$ such that 
	\begin{equation}\label{decompos}
		L_t=W_t+J_t=W_t+at+\int_0^t\int_{|x|\ge1} x N(dr,dx)+
		\int_0^t\int_{|x|\le1} x \wt N(dr,dx),	
	\end{equation}
	where $\wt N$ is the compensated Poisson measure: $\wt N(D_1,D_2):=N(D_1,D_2)-Leb(D_1) \nu(D_2)$, $D_1\in\mathcal{B}(\R_+)$, $D_2\in\mathcal{B}(\R^d)$, $Leb$ is the Lebesgue measure on $\R_+$.
	
	By noting, that the processes $W$ and $J$ are independent, it is easy to see that \ref{A:1} is satisfied with $\alpha=2$ by exactly the same argument as in the proof of part (i) above.
	
	\ref{A:2} is satisfied with $\alpha=2$ by the same argument as we used in the proof of \cref{P:RePhibound}; recall that $\int_{|y|\le 1} |y|^2 \,\nu(dy)<\infty$ thanks to the requirements to the jump measure.
	
	To show \ref{A:3}, we note that for any $p\in(0,2]$
	\begin{align}
		&\E |W_t|^p\le N t^{p/2};\label{estW}\\
		&\E \Bigl|\int_0^t\int_{|x|\le1} x \wt N(dr,dx)\Bigr|^p\le N t^{p/2}\Bigl(\int_{|x|\le1}x^2\nu(dx)\Bigr)^{p/2}=N t^{p/2}\label{estSJN},
	\end{align}
	where the inequality follows from, e.g., \cite[Lemma~2.4]{Kunita}. Further, by definition 
	\begin{align*}
		\E \Bigl|\int_0^t\int_{|x|\ge1} x N(dr,dx)\Bigr|^{\gamma\wedge1}&=	
		\E \Bigl|\sum_{0\le s\le t } \Delta L_s \one(|\Delta L_s|\ge1)\Bigr|^{\gamma\wedge1}\\
		&\le \E \sum_{0\le s\le t }|\Delta L_s|^{\gamma\wedge1} \one(|\Delta L_s|\ge1)\\
		&= t \int_{|x|\ge1} |x|^{\gamma\wedge1}\,\nu (dx)= Nt.
	\end{align*}
	If $p\le (\gamma\wedge1)$, then combining this with \eqref{estW}, \eqref{estSJN} and substituting into the decomposition \eqref{decompos}, we get
	$$
	\E|L_t|^p\le N t^{p/2}+N t^{p/(\gamma\wedge1)}\le N t^{p/2}.
	$$
	If $(\gamma\wedge1)\le p \le 2$, then similarly
	$$
	\E[|L_t|^p\wedge1]\le N t^{p/2}+N t\le N t^{p/2}.
	$$
	Thus $L$ satisfies \ref{A:3} with $\alpha=2$.
\end{proof}

\begin{proof}[Proof of \cref{lem:shiftedmodifiedSSL}]
	The proof is based on the method of \cite[proof of Lemma 2.2]{Mate20} (see also the proof of the original stochastic sewing lemma in \cite{Khoa}). The novelty here is that we apply the conditional Burkholder-Davis-Gundy inequality rather than the standard one. Let $(Z_i)_{i=2,\hdots,M}$ be a sequence of random vectors in $\R^d$ adapted to the filtration $\mathbb{G}:=(\mathcal{G}_i)_{i\ge0}$, and assume that $\G\subset \G_0$. Then
	\begin{equ}\label{eq:even-odd}
		\sum_{i=2}^M Z_i=\sum_{i=2}^M \E^{\mathcal{G}_{i-2}}Z_i +\sum_{i\,\text{\footnotesize even}}(Z_i-\E^{\mathcal{G}_{i-2}}Z_i)+\sum_{i\,\text{\footnotesize odd}}(Z_i-\E^{\mathcal{G}_{i-2}}Z_i).
	\end{equ}	
	The sequence $(Z_i-\E^{\mathcal{G}_{i-2}}Z_i)_{i\in\{2,\ldots,M\},\,i\,\text{\footnotesize even}}$ is a martingale difference sequence with respect to the filtration $(\mathcal{G}_i)_{i\in\{2,\ldots,M\},\,i\,\text{\footnotesize even}}$.
	We apply the conditional Burkholder-Davis-Gundy inequality \cite[Proposition~27]{cBDG}:
	\begin{equs}
		\Big\|\sum_{i\,\text{\footnotesize even}}(Z_i-\E^{\mathcal{G}_{i-2}}Z_i)\Big\|_{L^p(\Omega)|\G}&\le 
		N 
		\Big\|\sum_{i\,\text{\footnotesize even}}(Z_i-\E^{\mathcal{G}_{i-2}}Z_i)^2\Big\|_{L^{p/2}(\Omega)|\G}^{\frac12}
		\\
		&\leq N \Bigl(
		\sum_{i\,\text{\footnotesize even}}(\big\|Z_i\bigr\|_{L^{p}(\Omega)|\G}^2+\big\|\E^{\mathcal{G}_{i-2}}Z_i\bigr\|_{L^{p}(\Omega)|\G}^2)\Bigr)^{\frac12}\\
		&\le N \Bigl(
		\sum_{i\,\text{\footnotesize even}}\big\|Z_i\bigr\|_{L^{p}(\Omega)|\G}^2\Bigr)^{\frac12},
	\end{equs}
	where in the last step we used \eqref{standardbound} and the fact that $\G\subset\G_{i-2}$.
	Treating the term in \eqref{eq:even-odd} with the odd terms in the same way, we finally get
	\begin{equ}\label{eq:new-SSL-proof}
		\Big\|\sum_{i=2}^M Z_i\Big\|_{L^p(\Omega)|\G}\leq \sum_{i=2}^M \big\|\E^{\mathcal{G}_{i-2}}Z_i\big\|_{L^p(\Omega)|\G}+N\Bigl(\sum_{i=2}^M\| Z_i\|^2_{L_p(\Omega)|\G}\Bigr)^{1/2}.
	\end{equ}
	Now we proceed to the proof. For $0\le s \le t\le1$, $m\in \N$, consider the uniform partition of $[s,t]$: $t^m_i:=s+i(t-s)2^{-m}$, $i=0,\hdots, 2^m$. Put 
	\begin{equation*}
		A^m_{s,t}:=\sum_{i=1}^{2^m-1} A_{t^m_i,t^m_{i+1}},\,m\in\N.
	\end{equation*}
	Then it follows from \eqref{limpart} and the conditional Fatou's lemma that
	\begin{equation}\label{limLp}
		\|\A_t-\A_s\|_{L_p(\Omega)|\G}\le \liminf_{m\to\infty} \|A^m_{s,t}\|_{L_p(\Omega)|\G}.	
	\end{equation}
	For $m\in\N$ we apply \eqref{eq:new-SSL-proof} with $M:=2^m$, $Z_i:=\delta A_{t^m_{i-1},t^{m+1}_{2i-1},t^m_{i}}$, $\mathcal{G}_i:=\F_{t^m_{i}}$. We get 
	\begin{align}\label{threesplit}
		\|A^{m+1}_{s,t}-A^{m}_{s,t}\|_{L_p(\Omega)|\G}&\le\Bigl\|\sum_{i=2}^{2^m} \delta A_{t^m_{i-1},t^{m+1}_{2i-1},t^m_{i}}\Bigr\|_{L_p(\Omega)|\G}+\|A_{t^{m+1}_1,t^{m+1}_2}\|_{L_p(\Omega)|\G}\nn\\
		&\le N\sum_{i=2}^{2^m} \|\E^{\F^m_{t_{i-2}}}\delta A_{t^m_{i-1},t^{m+1}_{2i-1},t^m_{i}}\|_{L_p(\Omega)|\G}\nn\\
		&\phantom{\le}+N\Bigl(\sum_{i=2}^{2^m} \|\delta A_{t^m_{i-1},t^{m+1}_{2i-1},t^m_{i}}\|^2_{L_p(\Omega)|\G}\Bigr)^{1/2}+\|A_{t^{m+1}_1,t^{m+1}_2}\|_{L_p(\Omega)|\G}\nn\\
		&\le N 2^m \Gamma_2 2^{-m(1+\eps_2)}|t-s|^{1+\eps_2}+N 2^m \Gamma_3 2^{-m(1+\eps_3)}|t-s|^{1+\eps_3}\nn\\
		&\phantom{\le} + N 2^{m/2} \Gamma_1 2^{-m(1/2+\eps_1)}|t-s|^{1/2+\eps_1}+\Gamma_1 2^{-m(1/2+\eps_1)}|t-s|^{1/2+\eps_1}\nn\\
		&\le N \Gamma_2 2^{-m\eps_2}|t-s|^{1+\eps_2}+N \Gamma_3 2^{-m\eps_3}|t-s|^{1+\eps_3}\nn\\
		&\phantom{\le} +N  \Gamma_1 2^{-m\eps_1}|t-s|^{1/2+\eps_1}+\Gamma_1 2^{-m(1/2+\eps_1)}|t-s|^{1/2+\eps_1},
	\end{align}
	where in the penultimate inequality we used bounds \eqref{SSL1} and \eqref{SSL2}, the fact that 
	$t^m_{i-1}-(t^m_{i}-t^m_{i-1})=t^m_{i-2}$
	and inequality 
	$$\|\delta A_{s,u,t}\|_{L_p(\Omega)|\G}\le \| A_{s,t}\|_{L_p(\Omega)|\G}+
	\| A_{s,u}\|_{L_p(\Omega)|\G}+\| A_{u,t}\|_{L_p(\Omega)|\G}.$$
	
	Summing up inequalities \eqref{threesplit} over $m$ and using \eqref{SSL1} once again, we deduce for $m\in\N$
	\begin{align*}
		\|A^{m}_{s,t}\|_{L_p(\Omega)|\G}&\le \sum_{i=1}^{m-1}\|A^{i+1}_{s,t}-A^{i}_{s,t}\|_{L_p(\Omega)|\G}+
		\|A^{1}_{s,t}\|_{L_p(\Omega)|\G}\\
		&\le N \Gamma_2 |t-s|^{1+\eps_2}+N \Gamma_3 |t-s|^{1+\eps_3}+N  \Gamma_1|t-s|^{1/2+\eps_1}.
	\end{align*}
	This together with \eqref{limLp} yields the desired estimate \eqref{SSL3 cA}.
\end{proof}

The proof of \cref{prop:vmo_john_nirenberg} relies on the following  inequality which is a very minor modification of \cite[Lemma~2.1]{Le22}.
\begin{proposition}\label{p:khoaineq}
	Let $X$, $Y$ be nonnegative random variables. Assume that for any $\alpha>0$, $\theta\in(0,1)$
	one has 
	\begin{equation}\label{khoaineq}
		\P(X\ge2\alpha)\le \theta\P(X\ge\alpha )+\P(Y\ge\theta\alpha).
	\end{equation}
	Then for any $p\ge1$ there exists a constant $N=N(p)$ such that 
	\begin{equation}\label{rezmain}
		\|X\|_{L_p(\Omega)}\le N(p)\|Y\|_{L_p(\Omega)}.
	\end{equation}	
\end{proposition}	
\begin{proof} We follow the proof of \cite[Lemma~2.1]{Le22}. Using the identity $x^p=p\int_0^x \lambda^{p-1}\,d\lambda$ valid for any $x\ge0$, $p\ge1$, we derive for any $\theta\in(0,1)$, $M>0$
	\begin{align*}
		\E (X\wedge M)^p&=p\int_0^M \P(X\ge\lambda)\lambda^{p-1}\,d\lambda=2^{p}p\int_0^{\frac{M}2} \P(X\ge2\lambda)\lambda^{p-1}\,d\lambda\\
		&\le 
		2^pp\theta \int_0^{\frac{M}2} \P(X\ge\lambda)\lambda^{p-1}\,d\lambda
		+2^p p \int_0^{\frac{M}2} \P(Y\ge\theta\lambda)\lambda^{p-1}\,d\lambda\\
		&\le 2^p \theta \E (X\wedge M)^p+(2\theta^{-1})^p\E Y^p,
	\end{align*}
	where in the penultimate inequality we use our main assumption \eqref{khoaineq}. Take now $\theta:=2^{-p-1}$. Then, by above
	\begin{equation*}
		\frac12\E (X\wedge M)^p\le 2^{p^2+2p}\E Y^p.
	\end{equation*}
	Passing to the limit as $M \to \infty$ and using the monotone convergence theorem, we obtain \eqref{rezmain}.
\end{proof}

\begin{proof}[Proof of \cref{prop:vmo_john_nirenberg}]
	Fix $0\le S\le T$. Put 
	\begin{equation*}\xi^*(\omega):=\sup_{r\in[S,T]}\xi_r(\omega);\quad V^*(\omega):=\sup_{r\in[S,T]}|\A_r(\omega)-A_S(\omega)|,\qquad \omega\in\Omega.
	\end{equation*}
	We will assume that $\xi$ is bounded from below by a deterministic constant $\delta>0$.
	This is no loss of generality: if this is not the case, we can replace $\xi$ with $\xi':=\xi\vee \delta$, note that the condition \eqref{eq:vmo_gamma} still holds with $\xi'$, apply the claim with $\xi'$, and pass to the $\delta\to 0$ limit in the final bound \eqref{jnres}, which is possible thanks to the monotone convergence theorem.
	
	\textbf{Step~1}. We claim that for any stopping times $\tau\le\eta$ taking values in $[S,T]$ we have a.s.
	\begin{equation}\label{tauetast}
		\E^{\tau}\frac{|\A_\eta-\A_\tau|}{\xi^*}\le 2.
	\end{equation}
	First, we prove \eqref{tauetast} for the case when $\eta=T$ and $\tau$ takes finitely many values $S\le t_1<\hdots<t_n= T$.
	Recall the identity $\E^{\tau}(X\one_{\tau=t})=\E^t(X\one_{\tau=t})$ valid for any integrable random variable $X$, see, e.g., \cite[Problem~1.2.17(i)]{Karatzas}.
	Then, using  \eqref{eq:vmo_gamma} and the fact that $\xi^*\ge\xi_\tau$ we deduce 
	\begin{align}\label{mainthingjn1}
		\E^\tau\frac{|\A_T-\A_\tau|}{\xi^*}&\le\E^\tau\frac{|\A_T-\A_\tau|}{\xi_\tau}=\sum_{i=1}^n\E^{t_i}\Big(\frac{|\A_{T}-\A_{t_i}|}{\xi_{t_i}} \one(\tau=t_i)\Big)\nn\\
		&=
		\sum_{i=1}^n \one(\tau=t_i)\frac{1}{\xi_{t_i}} \E^{t_i}|\A_{T}-\A_{t_i}|\le1.
	\end{align}
	
	Next, for a general stopping time $\tau\in[S,T]$, we consider a sequence of stopping times $\tau_n$ taking finitely many values in $[S,T]$ and converging to $\tau$ from above. Then, for any $N>0$ we have using continuity of $\A$ and \eqref{mainthingjn1}:
	\begin{equation}\label{mainthingjn2}
		\E^\tau\frac{|\A_T-\A_{\tau}|}{\xi^*}\le \liminf_{n\to\infty} \E^\tau\frac{|\A_T-\A_{\tau_n}|}{\xi^*}=\liminf_{n\to\infty} \E^\tau\E^{\tau_n}\frac{|\A_T-\A_{\tau_n}|}{\xi^*}\le 1.
	\end{equation}
	Finally, let $\tau\le \eta$ be arbitrarily stopping times taking values in $[S,T]$. Then
	\eqref{mainthingjn2} yields
	\begin{equation*}
		\E^\tau\frac{|\A_\eta-\A_{\tau}|}{\xi^*}\le \E^\tau\frac{|\A_T-\A_{\tau}|}{\xi^*}+\E^\tau\frac{|\A_T-\A_{\eta}|}{\xi^*}\le1 +\E^\tau\E^\eta \frac{|\A_T-\A_{\eta}|}{\xi^*}\le 2,
	\end{equation*}
	which is \eqref{tauetast}.
	
	\textbf{Step~2}. Now we modify the corresponding part of the proof of  \cite[Theorem 1.3]{Le22} to adapt it to our new condition \eqref{tauetast}. For arbitrary $\alpha>0$ we consider two stopping times:
	\begin{equation*}
		\tau_\alpha:=T\wedge \inf\{r\in[S,T]: |\A_r-\A_S|\ge\alpha\};\qquad 
		\eta_\alpha:=T\wedge \inf\{r\in[S,T]: |\A_r-\A_S|\ge2\alpha\}.
	\end{equation*}
	Then for any $\theta\in(0,1)$ we clearly have
	\begin{align*}
		\{V^*\ge 2\alpha\}&\subset \{V^*\ge \alpha, |\A_{\eta_\alpha}-\A_{\tau_\alpha}|\ge\alpha\}\\
		&\subset 
		\{V^*\ge \alpha, |\A_{\eta_\alpha}-\A_{\tau_\alpha}|\ge 2\theta^{-1}\xi^*\}\cup
		\{2\theta^{-1}\xi^*\ge\alpha\}.
	\end{align*}
	Note  that $\{V^*\ge\alpha\}=\{|\A_{\tau_\alpha}-\A_S|\ge\alpha\}$ and hence the random variable $\one_ {\{V^*\ge\alpha\}}$ is $\F_{\tau_\alpha}$--measurable. 
	Therefore, using also  the Chebyshev inequality we get for any $\F_S$--measurable set $G$
	\begin{align*}
		\P(V^*\one_G\ge2\alpha)&=\P(V^*\ge2\alpha,G)\\
		&\le \P(V^*\ge \alpha, |\A_{\eta_\alpha}-\A_{\tau_\alpha}|\ge 2\theta^{-1}\xi^*,G)+ \P(
		2\xi^*\ge\alpha\theta,G)\\
		&\le \E \one_{V^*\ge \alpha}\one_G \E^{\tau_\alpha} \one_{\{|\A_{\eta_\alpha}-\A_{\tau_\alpha}|\ge 2\theta^{-1}\xi^*\}}+ \P(
		2\xi^*\one_G\ge\alpha\theta)\\
		&\le \theta\E \one_{V^*\ge \alpha}\one_G \E^{\tau_\alpha} \frac{|\A_{\eta_\alpha}-\A_{\tau_\alpha}|}{2\xi^*}+ \P(
		2\xi^*\one_G\ge\alpha\theta)\\
		&\le \theta\P (V^*\one_G\ge \alpha) + \P(2\xi^*\one_G\ge\alpha\theta),
	\end{align*}	
	where in the second inequality we used that $\F_S\subset\F_{\tau_\alpha}$ and in the last inequality we used \eqref{tauetast}. Since $\alpha>0$ and $\theta\in(0,1)$ were arbitrary, we see that condition \eqref{khoaineq} of \cref{p:khoaineq} holds. Hence, \eqref{rezmain} yields
	\begin{equation*}
		\|V^*\one_G\|_{L_p(\Omega)}\le N\|\xi^*\one_G\|_{L_p(\Omega)},
	\end{equation*}
	for $N=N(p)$ independent of $G$. Since $G$ was an arbitrary $\F_S$-measurable set, we get   
	\begin{equation*}
		\|V^*\|_{L_p(\Omega)|\F_S}\le N\|\xi^*\|_{L_p(\Omega)|\F_S},
	\end{equation*}
	which is the desired bound \eqref{jnres}.
\end{proof}

\begin{proof}[Proof of \cref{Prop:HKbounds}]
	We begin with the proof of \eqref{eq:K2A}. We proceed by induction. The case $\rho=0$ is obvious. Indeed $\|\cP_t f\|_{\C^0}\le \|f\|_{\C^0}\le \|f\|_{\C^\beta}$.
	
	Assume now that the statement is proved for $\rho\in[0,M]$, $M\in\Z_+$. Let us prove it for $\rho\in(M,M+1]$. 
	
	Let $\rho=M+1$. If $\beta\ge1$, then by the definition of the norm
	\begin{align}\label{betalargeMpl1}
		\|\cP_t f\|_{\C^{M+1}}&\le\|\cP_t f\|_{\C^0}+\sup_{i\in\{1,\hdots,d\}}\|\d_i \cP_t f\|_{\C^{M}}\nn\\
		&\le \|f\|_{\C^0}+\sup_{i\in\{1,\hdots,d\}}\|\cP_t \d_i f\|_{\C^{M}}\nn\\
		&\le \|f\|_{\C^0}+Nt^{(\beta-M-1)\wedge0/\alpha}\sup_{i\in\{1,\hdots,d\}}\|\d_i f\|_{\C^{\beta-1}}\nn\\
		&\le Nt^{(\beta-M-1)\wedge0/\alpha}\|f\|_{\C^{\beta}},
	\end{align}
	where in the third inequality we have used the induction step.
	If $\beta=0$, then 
	\begin{align}\label{betazeroMpl1}
		\|\cP_tf\|_{\C^{M+1}}&\le\|\cP_tf\|_{\C^{0}}+\sup_{i\in\{1,\hdots,d\}}\|\d_i\cP_t  f\|_{\C^{M}}\nn\\
		&\le\|f\|_{\C^{0}}+\sup_{i\in\{1,\hdots,d\}}\|\cP_{t/2}\d_i \cP_{t/2} f\|_{\C^{M}}\nn\\
		&\le\|f\|_{\C^{0}}+N t^{-M/\alpha}\sup_{i\in\{1,\hdots,d\}}\|\d_i \cP_{t/2} f\|_{\C^{0}}\nn\\
		&\le\|f\|_{\C^{0}}+N t^{-M/\alpha}t^{-1/\alpha}\|f\|_{\C^{0}}\nn\\
		&\le N t^{-(M+1)/\alpha}\|f\|_{\C^{0}},
	\end{align}
	where in the third inequality we used the induction step and in the fourth inequality we used \eqref{eq:K1newA}. Finally, if $\beta\in(0,1)$, $f\in\C^\beta$, then define for $\lambda>0$ the interpolation function
	\begin{equation*}
		K(\lambda,f):=\inf_{\substack{f=a+b\\a\in\C^0(\R^d), b\in\C^1(\R^d)}}	(\|a\|_{\C^0}+\lambda \|b\|_{\C^1}).
	\end{equation*}
	It is well-known, that if $f\in\C^\beta$, then 
	\begin{equation}\label{interpolationtheorem}
		N^{-1}\|f\|_{\C^\beta}\le \sup_{\lambda>0} \frac{K(\lambda,f)}{\lambda^{\beta}}\le N \|f\|_{\C^\beta},
	\end{equation}
	see, e.g., \cite[Example~1.8]{Lunardi}. Then for any $a\in\C^0(\R^d)$, $b\in\C^1(\R^d)$ such that $f=a+b$ we get using \eqref{betalargeMpl1} and \eqref{betazeroMpl1}
	\begin{align*}
		\|\cP_t f\|_{\C^{M+1}}&=\|\cP_t (a+b)\|_{\C^{M+1}}\le N t^{-(M+1)/\alpha}\|a\|_{\C^0}+N t^{-M/\alpha}\|b\|_{\C^1}\\
		&\le Nt^{-(M+1)/\alpha}(\|a\|_{\C^0}+t^{1/\alpha} \|b\|_{\C^1}).
	\end{align*}
	Taking infimum over all $a\in\C^0(\R^d)$, $b\in\C^1(\R^d)$ such that $f=a+b$ and using \eqref{interpolationtheorem}, we get
	\begin{equation*}
		\|\cP_t f\|_{\C^{M+1}}\le Nt^{-(M+1)/\alpha}K(t^{1/\alpha},f)\le N\|f\|_{\C^\beta} t^{-(M+1)/\alpha}t^{\beta/\alpha},
	\end{equation*}
	which is \eqref{eq:K2A}. Thus, the case $\rho=M+1$, $\beta\ge0$ is proven.
	
	Finally if $\rho\in(M,M+1)$, then by above and the standard interpolation inequality 
	\begin{align*}
		\|\cP_t f\|_{\C^{\rho}}&\le \|\cP_t f\|_{\C^{M}}^{M+1-\rho}\|\cP_t f\|_{\C^{M+1}}^{\rho-M}\le  N\|f\|_{\C^\beta} t^{\frac{(\beta-M)\wedge0}\alpha(M+1-\rho)}t^{\frac{(\beta-M-1)\wedge0}\alpha(\rho-M)}\\
		&\le N\|f\|_{\C^\beta} t^{\frac{(\beta-\rho)\wedge0}\alpha}.
	\end{align*}
	
	Now let us prove \eqref{eq:K1A}. We start by assuming additionally that $f$ vanishes at infinity. First note that it follows from assumptions of the proposition that $1+\eps\ge \mu\ge(\beta-\rho)/\alpha$. Thus
	\begin{equation}\label{betarhoalpha}
		\beta-\rho-\alpha-\eps\alpha\le0.
	\end{equation}
	We see that for any $0\le s <t$, $x\in\R^d$,
	\begin{equation*}
		\cP_tf(x)-\cP_sf(x)=\int_s^t \d_r\cP_rf(x)\,dr=\int_s^t \L \cP_r f(x)\,dr,
	\end{equation*}
	where we used the fact that for $r>0$ we have $\cP_r f\in\C^2$ by \eqref{eq:K2A} and we applied It\^o's formula for L\'evy processes, see, e.g., \cite[Theorem 4.4.7]{Iphone}. Therefore, by \eqref{eq:K2newA} and \eqref{eq:K2A}, taking also into account \eqref{betarhoalpha} and that $\cP_r f$ vanishes at infinity for any $r\ge0$, we deduce
	\begin{align*}
		\|\cP_tf-\cP_sf\|_{\C^\rho}&\le \int_s^t \|\L \cP_r f\|_{\C^\rho}\,dr\le 
		N\int_s^t \| \cP_r f\|_{\C^{\rho+\alpha+\eps\alpha}}\,dr\\
		&\le N \| f\|_{\C^\beta}\int_s^t r^{\frac{\beta-\rho-\alpha-\eps\alpha}\alpha}\,dr\\
		&\le N \| f\|_{\C^\beta}\int_s^t s^{\frac{\beta-\rho}\alpha-\mu} (r-s)^{\mu-1-\eps}\,dr\\
		&\le  N \| f\|_{\C^\beta} s^{\frac{\beta-\rho}\alpha-\mu} (t-s)^{\mu-\eps},
	\end{align*}
	where in the penultimate inequality we used obvious bounds $r\ge s$  and $r\ge r-s$ (we note that the corresponding exponents are nonpositive since  $\frac{\beta-\rho}{\alpha}\le\mu\le 1+\eps$), and in the last bound we used that $\mu>\eps$ and the singularity is integrable. This implies \eqref{eq:K1A} for the case when $f$ vanishes at infinity.
	
	In general case, take a smooth function $\chi\colon\R_+\to[0,1]$ such that $\chi(x)=1$ for $x\in[0,1]$ and 
	$\chi(x)=0$ for $x\ge2$. Then the function $f_n(x):=f(x)\chi(|x|/n)$ vanishes at infinity and $\|f_n\|_{\C^\beta}\le N\|f\|_{\C^\beta}$ for some $N$ depending only on the choice of $\chi$. By above,
	\begin{equation*}
		\|\cP_tf_n-\cP_sf_n\|_{\C^\rho}\le   N \| f_n\|_{\C^\beta} s^{\frac{\beta-\rho}\alpha-\mu} (t-s)^{\mu-\eps}\le  N \| f\|_{\C^\beta} s^{\frac{\beta-\rho}\alpha-\mu} (t-s)^{\mu-\eps}.
	\end{equation*}
	By the dominated convergence theorem, $(\cP_t-\cP_s)f_n\to (\cP_t-\cP_s)f$ everywhere. Therefore,
	\begin{equation*}
		\|(\cP_t-\cP_s)f\|_{\C^\rho}\le \liminf_{n\to\infty}	\|(\cP_t-\cP_s)f_n\|_{\C^\rho}\le   N \| f\|_{\C^\beta} s^{\frac{\beta-\rho}\alpha-\mu} (t-s)^{\mu-\eps}.\qedhere
	\end{equation*}
\end{proof}
\end{appendix}
\bibliographystyle{Martin}
\bibliography{Bounded_Drift}

\begin{thebibliography}{BDG21b}
\expandafter\ifx\csname url\endcsname\relax
  \def\url#1{\texttt{#1}}\fi
\expandafter\ifx\csname urlprefix\endcsname\relax\def\urlprefix{URL }\fi
\expandafter\ifx\csname href\endcsname\relax
  \def\href#1#2{#2}\fi
\expandafter\ifx\csname burlalt\endcsname\relax
  \def\burlalt#1#2{\href{#2}{\texttt{#1}}}\fi

\bibitem[AJP22]{Mark22}
\textsc{C.~Amorino}, \textsc{A.~Jaramillo}, and \textsc{M.~Podolskij}.
\newblock Optimal estimation of local time and occupation time measure for an
  {$\alpha$}-stable {L}evy process.
\newblock \emph{arXiv preprint arXiv:2210.07672} (2022).

\bibitem[AL22]{AltG}
\textsc{R.~Altmeyer} and \textsc{R.~{Le Gu\'{e}vel}}.
\newblock Optimal {$L^2$}-approximation of occupation and local times for
  symmetric stable processes.
\newblock \emph{Electron. J. Stat.} \textbf{16}, no.~1, (2022), 2859--2883.
\newblock
  \burlalt{doi:10.1214/22-ejs2013}{http://dx.doi.org/10.1214/22-ejs2013}.

\bibitem[Alt21]{Alt}
\textsc{R.~Altmeyer}.
\newblock Approximation of occupation time functionals.
\newblock \emph{Bernoulli} \textbf{27}, no.~4, (2021), 2714--2739.
\newblock
  \burlalt{doi:10.3150/21-BEJ1328}{http://dx.doi.org/10.3150/21-BEJ1328}.

\bibitem[App09]{Iphone}
\textsc{D.~Applebaum}.
\newblock \emph{L\'{e}vy processes and stochastic calculus}, vol. 116 of
  \emph{Cambridge Studies in Advanced Mathematics}.
\newblock Cambridge University Press, Cambridge, second ed., 2009.
\newblock
  \burlalt{doi:10.1017/CBO9780511809781}{http://dx.doi.org/10.1017/CBO9780511809781}.

\bibitem[BDG21a]{ButDarGEr}
\textsc{O.~Butkovsky}, \textsc{K.~Dareiotis}, and \textsc{M.~Gerencs{\'{e}}r}.
\newblock Approximation of {SDEs}: a stochastic sewing approach.
\newblock \emph{Probability Theory and Related Fields} (2021).
\newblock
  \burlalt{doi:10.1007/s00440-021-01080-2}{http://dx.doi.org/10.1007/s00440-021-01080-2}.

\bibitem[BDG21b]{BDG-SPDE}
\textsc{O.~{Butkovsky}}, \textsc{K.~{Dareiotis}}, and
  \textsc{M.~{Gerencs{\'e}r}}.
\newblock {Optimal rate of convergence for approximations of SPDEs with
  non-regular drift}.
\newblock \emph{arXiv e-prints}  arXiv:2110.06148.
\newblock \burlalt{arXiv:2110.06148}{http://arxiv.org/abs/2110.06148}.

\bibitem[CF16]{cBDG}
\textsc{M.~Coghi} and \textsc{F.~Flandoli}.
\newblock Propagation of chaos for interacting particles subject to
  environmental noise.
\newblock \emph{Ann. Appl. Probab.} \textbf{26}, no.~3, (2016), 1407--1442.
\newblock
  \burlalt{doi:10.1214/15-AAP1120}{http://dx.doi.org/10.1214/15-AAP1120}.

\bibitem[CMS90]{Carmona}
\textsc{R.~Carmona}, \textsc{W.~C. Masters}, and \textsc{B.~Simon}.
\newblock Relativistic {S}chr\"{o}dinger operators: asymptotic behavior of the
  eigenfunctions.
\newblock \emph{J. Funct. Anal.} \textbf{91}, no.~1, (1990), 117--142.
\newblock
  \burlalt{doi:10.1016/0022-1236(90)90049-Q}{http://dx.doi.org/10.1016/0022-1236(90)90049-Q}.

\bibitem[CSZ18]{Chen32}
\textsc{Z.-Q. Chen}, \textsc{R.~Song}, and \textsc{X.~Zhang}.
\newblock Stochastic flows for {L}{\'{e}}vy processes with {H\"o}lder drifts.
\newblock \emph{Revista Matem{\'{a}}tica Iberoamericana} \textbf{34}, no.~4,
  (2018), 1755--1788.
\newblock \burlalt{doi:10.4171/rmi/1042}{http://dx.doi.org/10.4171/rmi/1042}.

\bibitem[CVST20]{chakraborty2020relativistic}
\textsc{P.~Chakraborty}, \textsc{B.~Vermeersch}, \textsc{A.~Shakouri}, and
  \textsc{S.~Tindel}.
\newblock Relativistic stable processes in quasiballistic heat conduction in
  semiconductors.
\newblock \emph{Physical Review~E} \textbf{101}, no.~4, (2020), 042110.

\bibitem[DGL21]{KML}
\textsc{K.~{Dareiotis}}, \textsc{M.~{Gerencs{\'e}r}}, and \textsc{K.~{L{\^e}}}.
\newblock {Quantifying a convergence theorem of Gy{\"o}ngy and Krylov}.
\newblock \emph{arXiv e-prints}  arXiv:2101.12185.
\newblock \burlalt{arXiv:2101.12185}{http://arxiv.org/abs/2101.12185}.

\bibitem[DS15]{Schmoments}
\textsc{C.-S. Deng} and \textsc{R.~L. Schilling}.
\newblock On shift {H}arnack inequalities for subordinate semigroups and moment
  estimates for {L}\'{e}vy processes.
\newblock \emph{Stochastic Process. Appl.} \textbf{125}, no.~10, (2015),
  3851--3878.
\newblock
  \burlalt{doi:10.1016/j.spa.2015.05.013}{http://dx.doi.org/10.1016/j.spa.2015.05.013}.

\bibitem[FKM21]{Menozzi_McKean}
\textsc{N.~Frikha}, \textsc{V.~Konakov}, and \textsc{S.~Menozzi}.
\newblock Well-posedness of some non-linear stable driven {SDE}s.
\newblock \emph{Discrete Contin. Dyn. Syst.} \textbf{41}, no.~2, (2021),
  849--898.
\newblock
  \burlalt{doi:10.3934/dcds.2020302}{http://dx.doi.org/10.3934/dcds.2020302}.

\bibitem[Ger20]{Mate20}
\textsc{M.~Gerencs{\'{e}}r}.
\newblock Regularisation by regular noise.
\newblock \emph{arXiv e-prints} (2020).
\newblock \burlalt{arXiv:2009.08418}{http://arxiv.org/abs/2009.08418}.

\bibitem[GK14]{GK14}
\textsc{I.~Ganychenko} and \textsc{A.~Kulik}.
\newblock Rates of approximation of nonsmooth integral-type functionals of
  {M}arkov processes.
\newblock \emph{Mod. Stoch. Theory Appl.} \textbf{1}, no.~2, (2014), 117--126.
\newblock
  \burlalt{doi:10.15559/vmsta-2014.12}{http://dx.doi.org/10.15559/vmsta-2014.12}.

\bibitem[GKK15]{GKK15}
\textsc{I.~Ganychenko}, \textsc{V.~Knopova}, and \textsc{A.~Kulik}.
\newblock Accuracy of discrete approximation for integral functionals of
  {M}arkov processes.
\newblock \emph{Mod. Stoch. Theory Appl.} \textbf{2}, no.~4, (2015), 401--420.
\newblock
  \burlalt{doi:10.15559/15-VMSTA46}{http://dx.doi.org/10.15559/15-VMSTA46}.

\bibitem[HSY21]{LevySobolev}
\textsc{X.~{Huang}}, \textsc{Y.~{Suo}}, and \textsc{C.~{Yuan}}.
\newblock {Estimate of heat kernel for Euler-Maruyama scheme of SDEs driven by
  {$\alpha$}-stable noise and applications}.
\newblock \emph{arXiv e-prints}  arXiv:2103.01323.
\newblock \burlalt{arXiv:2103.01323}{http://arxiv.org/abs/2103.01323}.

\bibitem[KP02]{Platen-weak}
\textsc{K.~Kubilius} and \textsc{E.~Platen}.
\newblock Rate of weak convergence of the {E}uler approximation for diffusion
  processes with jumps.
\newblock \emph{Monte Carlo Methods and Appl.} \textbf{8}, no.~1, (2002),
  83--96.
\newblock
  \burlalt{doi:doi:10.1515/mcma.2002.8.1.83}{http://dx.doi.org/doi:10.1515/mcma.2002.8.1.83}.

\bibitem[KS91]{Karatzas}
\textsc{I.~Karatzas} and \textsc{S.~E. Shreve}.
\newblock \emph{Brownian motion and stochastic calculus}, vol. 113 of
  \emph{Graduate Texts in Mathematics}.
\newblock Springer-Verlag, New York, second ed., 1991.
\newblock
  \burlalt{doi:10.1007/978-1-4612-0949-2}{http://dx.doi.org/10.1007/978-1-4612-0949-2}.

\bibitem[KS19a]{KS2019}
\textsc{F.~K\"{u}hn} and \textsc{R.~L. Schilling}.
\newblock On the domain of fractional {L}aplacians and related generators of
  {F}eller processes.
\newblock \emph{J. Funct. Anal.} \textbf{276}, no.~8, (2019), 2397--2439.
\newblock
  \burlalt{doi:10.1016/j.jfa.2018.12.011}{http://dx.doi.org/10.1016/j.jfa.2018.12.011}.

\bibitem[KS19b]{KSEM2019}
\textsc{F.~K\"{u}hn} and \textsc{R.~L. Schilling}.
\newblock Strong convergence of the {E}uler-{M}aruyama approximation for a
  class of {L}\'{e}vy-driven {SDE}s.
\newblock \emph{Stochastic Process. Appl.} \textbf{129}, no.~8, (2019),
  2654--2680.
\newblock
  \burlalt{doi:10.1016/j.spa.2018.07.018}{http://dx.doi.org/10.1016/j.spa.2018.07.018}.

\bibitem[Kul19]{Kulik19}
\textsc{A.~M. Kulik}.
\newblock On weak uniqueness and distributional properties of a solution to an
  {SDE} with {$\alpha$}-stable noise.
\newblock \emph{Stochastic Process. Appl.} \textbf{129}, no.~2, (2019),
  473--506.
\newblock
  \burlalt{doi:10.1016/j.spa.2018.03.010}{http://dx.doi.org/10.1016/j.spa.2018.03.010}.

\bibitem[Kun04]{Kunita}
\textsc{H.~Kunita}.
\newblock Stochastic differential equations based on {L}\'{e}vy processes and
  stochastic flows of diffeomorphisms.
\newblock In \emph{Real and stochastic analysis}, Trends Math.,  305--373.
  Birkh\"{a}user Boston, Boston, MA, 2004.

\bibitem[L{\^e}20]{Khoa}
\textsc{K.~L{\^e}}.
\newblock A stochastic sewing lemma and applications.
\newblock \emph{Electron. J. Probab.} \textbf{25}, (2020), Paper No. 38, 55.
\newblock \burlalt{doi:10.1214/20-ejp442}{http://dx.doi.org/10.1214/20-ejp442}.

\bibitem[L{\^e}22a]{Le22}
\textsc{K.~L{\^e}}.
\newblock Maximal inequalities and weighted {BMO} processes.
\newblock \emph{arXiv preprint arXiv:2211.15550} (2022).

\bibitem[L{\^e}22b]{Le22b}
\textsc{K.~L{\^e}}.
\newblock Quantitative john--nirenberg inequality for stochastic processes of
  bounded mean oscillation.
\newblock \emph{arXiv preprint arXiv:2210.15736} (2022).

\bibitem[LL21]{le2021taming}
\textsc{K.~L{\^e}} and \textsc{C.~Ling}.
\newblock Taming singular stochastic differential equations: A numerical
  method.
\newblock \emph{arXiv preprint arXiv:2110.01343} (2021).

\bibitem[Lun18]{Lunardi}
\textsc{A.~Lunardi}.
\newblock \emph{Interpolation theory}, vol.~16 of \emph{Appunti. Scuola Normale
  Superiore di Pisa (Nuova Serie) [Lecture Notes. Scuola Normale Superiore di
  Pisa (New Series)]}.
\newblock Edizioni della Normale, Pisa, 2018.
\newblock Third edition [of MR2523200].
\newblock
  \burlalt{doi:10.1007/978-88-7642-638-4}{http://dx.doi.org/10.1007/978-88-7642-638-4}.

\bibitem[LZ22]{LZ22}
\textsc{C.~Ling} and \textsc{G.~Zhao}.
\newblock Nonlocal elliptic equation in {H}\"{o}lder space and the martingale
  problem.
\newblock \emph{J. Differential Equations} \textbf{314}, (2022), 653--699.
\newblock
  \burlalt{doi:10.1016/j.jde.2022.01.025}{http://dx.doi.org/10.1016/j.jde.2022.01.025}.

\bibitem[MX18]{MX}
\textsc{R.~Mikulevi\v{c}ius} and \textsc{F.~Xu}.
\newblock On the rate of convergence of strong {E}uler approximation for {SDE}s
  driven by {L}evy processes.
\newblock \emph{Stochastics} \textbf{90}, no.~4, (2018), 569--604.
\newblock
  \burlalt{doi:10.1080/17442508.2017.1381095}{http://dx.doi.org/10.1080/17442508.2017.1381095}.

\bibitem[MZ11]{Mikulevicius-weak}
\textsc{R.~Mikulevi{\v{c}}ius} and \textsc{C.~Zhang}.
\newblock On the rate of convergence of weak euler approximation for
  nondegenerate {SDEs} driven by {L}{\'{e}}vy processes.
\newblock \emph{Stochastic Processes and their Applications} \textbf{121},
  no.~8, (2011), 1720--1748.
\newblock
  \burlalt{doi:10.1016/j.spa.2011.04.004}{http://dx.doi.org/10.1016/j.spa.2011.04.004}.

\bibitem[NO23]{NO22}
\textsc{K.~Nasroallah} and \textsc{Y.~Ouknine}.
\newblock Euler approximation and stability of the solution to stochastic
  differential equations with jumps under pathwise uniqueness.
\newblock \emph{Stochastics} \textbf{95}, no.~2, (2023), 266--302.
\newblock
  \burlalt{doi:10.1080/17442508.2022.2071107}{http://dx.doi.org/10.1080/17442508.2022.2071107}.

\bibitem[Pap08]{Papapa}
\textsc{A.~Papapantoleon}.
\newblock An introduction to {L}evy processes with applications in finance.
\newblock \emph{arXiv preprint arXiv:0804.0482} (2008).

\bibitem[Pri12]{Priola1}
\textsc{E.~Priola}.
\newblock {Pathwise uniqueness for singular SDEs driven by stable processes}.
\newblock \emph{Osaka Journal of Mathematics} \textbf{49}, no.~2, (2012), 421
  -- 447.

\bibitem[Pri15]{Priola_flow}
\textsc{E.~Priola}.
\newblock Stochastic flow for {SDE}s with jumps and irregular drift term.
\newblock In \emph{Stochastic analysis}, vol. 105 of \emph{Banach Center
  Publ.},  193--210. Polish Acad. Sci. Inst. Math., Warsaw, 2015.
\newblock
  \burlalt{doi:10.4064/bc105-0-12}{http://dx.doi.org/10.4064/bc105-0-12}.

\bibitem[Pri18]{Priola2}
\textsc{E.~Priola}.
\newblock Davie's type uniqueness for a class of {SDE}s with jumps.
\newblock \emph{Ann. Inst. Henri Poincar\'{e} Probab. Stat.} \textbf{54},
  no.~2, (2018), 694--725.
\newblock
  \burlalt{doi:10.1214/16-AIHP818}{http://dx.doi.org/10.1214/16-AIHP818}.

\bibitem[PS21]{PSz21}
\textsc{P.~Przyby{\l}owicz} and \textsc{M.~Sz{\"o}lgyenyi}.
\newblock Existence, uniqueness, and approximation of solutions of
  jump-diffusion sdes with discontinuous drift.
\newblock \emph{Applied Mathematics and Computation} \textbf{403}, (2021),
  126191.
\newblock
  \burlalt{doi:https://doi.org/10.1016/j.amc.2021.126191}{http://dx.doi.org/https://doi.org/10.1016/j.amc.2021.126191}.

\bibitem[PT17]{PT}
\textsc{O.~M. Pamen} and \textsc{D.~Taguchi}.
\newblock {Strong rate of convergence for the Euler–Maruyama approximation of
  SDEs with H{\"o}lder continuous drift coefficient}.
\newblock \emph{Stochastic Processes and their Applications} \textbf{127},
  no.~8, (2017), 2542 -- 2559.
\newblock
  \burlalt{doi:https://doi.org/10.1016/j.spa.2016.11.008}{http://dx.doi.org/https://doi.org/10.1016/j.spa.2016.11.008}.

\bibitem[Sat13]{Sato-san}
\textsc{K.-i. Sato}.
\newblock \emph{L\'{e}vy processes and infinitely divisible distributions},
  vol.~68 of \emph{Cambridge Studies in Advanced Mathematics}.
\newblock Cambridge University Press, Cambridge, 2013.
\newblock Translated from the 1990 Japanese original, Revised edition of the
  1999 English translation.

\bibitem[SSW12]{SSW12}
\textsc{R.~L. Schilling}, \textsc{P.~Sztonyk}, and \textsc{J.~Wang}.
\newblock Coupling property and gradient estimates of {L}\'{e}vy processes via
  the symbol.
\newblock \emph{Bernoulli} \textbf{18}, no.~4, (2012), 1128--1149.
\newblock \burlalt{doi:10.3150/11-BEJ375}{http://dx.doi.org/10.3150/11-BEJ375}.

\bibitem[Tan92]{Tan92}
\textsc{T.~Taniguchi}.
\newblock Successive approximations to solutions of stochastic differential
  equations.
\newblock \emph{J. Differential Equations} \textbf{96}, no.~1, (1992),
  152--169.
\newblock
  \burlalt{doi:10.1016/0022-0396(92)90148-G}{http://dx.doi.org/10.1016/0022-0396(92)90148-G}.

\bibitem[Yam81]{Yam81}
\textsc{T.~Yamada}.
\newblock On the successive approximation of solutions of stochastic
  differential equations.
\newblock \emph{J. Math. Kyoto Univ.} \textbf{21}, no.~3, (1981), 501--515.
\newblock
  \burlalt{doi:10.1215/kjm/1250521975}{http://dx.doi.org/10.1215/kjm/1250521975}.

\end{thebibliography}
\end{document}